\documentclass[aop]{imsart}
\usepackage{amsmath, amssymb, amsthm, amsfonts}
\usepackage[numbers]{natbib}
\usepackage{hyperref}

\numberwithin{equation}{section}

\startlocaldefs
\theoremstyle{plain}
\newtheorem{thm}{Theorem}[section]
\newtheorem{lemma}[thm]{Lemma}
\newtheorem{prop}[thm]{Proposition}
\newtheorem{cor}[thm]{Corollary}
\theoremstyle{definition}
\newtheorem{defn}[thm]{Definition}
\newtheorem{example}[thm]{Example}
\newtheorem{rem}[thm]{Remark}

\DeclareMathOperator{\diag}{diag} 
\DeclareMathOperator{\Tr}{\rm Tr} 
\DeclareMathOperator{\cov}{Cov} 
\newcommand{\CC}{\mathbb C} 
\newcommand{\RR}{\mathbb R} 
\newcommand{\ZZ}{\mathbb Z} 
\newcommand{\EE}{\mathbb E} 
\newcommand{\NN}{\mathbb N} 
\newcommand{\PP}{\mathbb P} 
\newcommand{\norm}[1]{\left\lVert #1 \right\rVert} 
\newcommand{\GT}{ \mathrm{GT}_M} 
\newcommand{\abs}[1]{\left\lvert #1 \right\rvert} 
\newcommand{\Ploc}{\mathbb{P}_N^{\Sigma, \mathrm{loc}}}
\newcommand{\Zloc}{Z_N^{\Sigma, \mathrm{loc}}} 
\newcommand{\one}{\mathbf 1} 

\endlocaldefs

\begin{document}

\begin{frontmatter}
\title{Markovian Random Words, a Gaussian Random Matrix Model and Beyond}

\begin{aug}

\author[A]{\fnms{Christian}~\snm{Houdr\'e}\ead[label=e1]{houdre@math.gatech.edu}}
\author[A]{\fnms{Dorian}~\snm{Kidonakis}\ead[label=e2]{doriankidonakis@gmail.com}}

\address[A]{School of Mathematics, Georgia Institute of Technology\printead[presep={ ,\ }]{e1,e2}}

\end{aug}

\begin{abstract}
After centering and scaling, the length of the longest weakly increasing subsequence of a finite-state Markov random word converges to a Brownian last-passage functional with dependent components. We investigate when this functional, and more generally the limiting RSK Young-diagram shape, admits a Gaussian random-matrix representation. We introduce Hermitian Gaussian matrices with correlated diagonal entries and show that isotropy of the Brownian covariance on the trace-zero hyperplane yields such a representation. Within this natural class of Hermitian models with independent Gaussian off-diagonal entries, we prove a rigidity theorem showing that isotropy, together with equality of the off-diagonal variances, characterizes the conditional largest-eigenvalue representation. Explicit three-letter non-cyclic Markov chains demonstrate that this representation is not universal: the
limiting Brownian functional may fail to have the law of the largest  eigenvalue of {\it any} centered jointly Gaussian $3\times 3$ Hermitian matrix. Finally, we extend the random-matrix identification to chains with non-uniform stationary distributions and to countable state spaces.

\end{abstract}

\begin{keyword}[class=MSC]
\kwdgroup[type=primary]{
\kwd{60B20} 
\kwd{60G15}}
\kwdgroup[type=secondary]{
\kwd{60F17}
\kwd{60J10}
\kwd{05A05}
\kwd{05A17}
}
\end{keyword}

\begin{keyword}
\kwd{Markov random words}
\kwd{Longest increasing subsequences}
\kwd{RSK correspondence}
\kwd{Young diagrams}
\kwd{Gelfand--Tsetlin patterns}
\kwd{Gaussian random matrices}
\kwd{GUE}
\kwd{Brownian last-passage functionals}
\kwd{Dependent Brownian motion}
\kwd{Eigenvalue distributions}
\end{keyword}

\end{frontmatter}

\section{Introduction and Main Results}

The limiting law of the length of the longest weakly increasing subsequence of a random word generated by a Markov chain on a finite totally ordered state space was identified in \cite{HL2} as a maximal functional of a multidimensional Brownian motion with dependent components. In the particular case of cyclic Markov chains on alphabets of cardinality $M=2,3$, this functional is, up to a scaling factor, the same as for words with iid uniformly distributed letters \cite{CG,HL1,HL2}. Therefore, by the results of \cite{Baryshnikov,GTW,BGH}, it has the same law as the largest eigenvalue of an $M\times M$ traceless GUE matrix, thereby answering positively, in these cases, a question raised in \cite{Kuperberg} (Conjecture 7). For $M\ge 4$, however, even in the cyclic setting, the limiting Brownian functional is no longer necessarily of traceless-GUE type. This naturally raises the question of whether or not the limiting laws associated with Markov random words admit a Gaussian random-matrix representation.

Our interest in this question originates in the classical connection between increasing subsequences, Young diagrams, and random matrices. For iid uniformly distributed letters on an alphabet of cardinality $M$, the centered and normalized length of the longest weakly increasing subsequence converges to the largest eigenvalue of an $M\times M$ traceless GUE matrix. More generally, the normalized limiting shape of the Young diagram obtained through the Robinson–Schensted–Knuth (RSK) correspondence is distributed as the spectrum of such a matrix; see \cite{Kerov,TW,J,Baryshnikov,GTW,Bu,Me}. 
The interlacing eigenvalues of successive GUE principal minors form the GUE-minor process; see, in particular, \cite{NJEJP06}.

These connections belong to the broader relationship between increasing subsequences, Young diagrams, non-intersecting paths, and random matrices; see also \cite{J1,J2}.
Extensions to non-uniform random words were obtained in \cite{ITW1,ITW2,HX}. 
The connection between non-colliding Brownian motions, Weyl chambers, and random matrices is part of a broader circle of results; see, among others, \cite{Baryshnikov, Grabiner, BJ, BBO, O, OY}.  Closely related Brownian last-passage functionals also arise naturally in tandem-queueing systems; see, for example, \cite{GW}. The representation-theoretic background also has antecedents in atomic physics, through studies of symmetry in superradiance and subradiance \cite{CLPP} and of permutation-symmetry probabilities for tensor products of identical atomic states \cite{ARS}. The connection between tensor-product decompositions, random-word RSK shapes, and Gaussian random matrices is developed explicitly in \cite{Kuperberg}.

The general Markovian limits obtained in \cite{HL2} involve Brownian motions with nontrivial covariance structures, and the classical GUE correspondence does not directly extend to this setting. To investigate this issue, we introduce a class of Gaussian Hermitian random matrices whose off-diagonal upper-triangular entries are independent circular complex Gaussian random variables while the diagonal entries form a centered Gaussian vector with a prescribed covariance matrix $\Sigma$. 
These ensembles are generally not invariant under arbitrary unitary
conjugation, although full unitary invariance persists for certain choices of  
$\Sigma\neq I$.  We derive their eigenvalue distributions, describe their residual conjugation symmetries, and construct discrete RSK models whose Gelfand--Tsetlin patterns converge to the spectra of the principal minors of these matrices. This provides a probabilistic extension, to a dependent setting, of the approach developed in \cite{Baryshnikov,GTW,BGH}.

A central point of the paper is that full unitary invariance is not the relevant condition for the Brownian/random-matrix correspondence. Let
$Q=I-\frac1M{\bf 1}{\bf 1}^{T}$ be the orthogonal projection onto the trace-zero hyperplane ${\bf 1}^\perp=\{x\in\mathbb R^M: {\bf1}^T x=0\}$.  (Throughout, $\one$ is the column vector of $\RR^M$ with each coordinate equal to $1$, while $I$ is the $M\times M$ identity matrix.)  
The limiting Brownian motion $B^\Sigma$ has covariance ${\rm Cov} (B^\Sigma(t), B^\Sigma(s)) = (t\wedge s) \Sigma$, and hence 
$\Sigma$ is the covariance matrix of $B^\Sigma(1)$.  The relevant condition turns out to be $Q\Sigma Q=cQ$, 
for some $c\geq0$. Thus, what matters is isotropy of the covariance after projection onto ${\bf 1}^\perp$, namely that $Q\Sigma Q$ acts as $cI$ on ${\bf 1}^\perp$, rather than isotropy of the full covariance matrix. Under this condition the projected Brownian motion has, up to the factor $\sqrt c$, 
the same law as standard Brownian motion projected 
onto ${\bf 1}^\perp$, and the traceless-GUE structure reappears.

We show, moreover, that this condition has a rigidity character in the nondegenerate case. Decomposing the Brownian motion into its endpoint and an independent Brownian bridge leads naturally to a conditional last-passage functional, with the endpoint playing the role of the diagonal of the corresponding Gaussian matrix. We prove that this conditional Brownian functional admits, for every $d\in\RR^M$, a representation as
$\lambda_{\max}\bigl(\operatorname{diag}(d)+H\bigr)$, 
where $H$ has zero diagonal and independent complex Gaussian off-diagonal entries of variances $v_{ij}$, if and only if there exists $c>0$ such that
$Q\Sigma Q=cQ$, and $v_{ij}=c$, $i<j$.  Thus the deterministic vector $d$ appearing in the rigidity theorem should be understood as the value of the random endpoint after conditioning, rather than as a deterministic drift in the original Markovian problem. The necessity is obtained by reducing the identity successively to two- and three-dimensional subsystems: the two-dimensional reductions determine the individual off-diagonal variances from the Brownian covariance, while the three-dimensional reductions force all these variances to coincide.

The Gaussian random-matrix representation is nevertheless not universal. We construct explicit Markov chains for which the limiting Brownian functional cannot have the law of the largest eigenvalue of \emph{any} centered jointly Gaussian Hermitian matrix whose dimension equals the alphabet size, even when arbitrary dependence among the matrix entries is allowed. The obstruction is based on small-deviation probabilities and the geometry of Brownian motion in cones. In a four-state cyclic example, the limiting Brownian functional $F_\Sigma$ satisfies
$\mathbb P(F_\Sigma \leq\varepsilon)=O(\varepsilon^{17})$, as $\varepsilon\downarrow0$, 
which is incompatible with the fact that the real dimension of the space of $4\times4$ Hermitian matrices is $16$.  
Indeed, this is incompatible with the Gaussian small-ball lower bound of order $\varepsilon^r$, where $r\le 16$ is the dimension of the linear support of the Gaussian matrix. 

The phenomenon is not restricted to alphabets of cardinality at least four. For three-state chains with uniform stationary distribution, let
$X_1=B_1^\Sigma-B_2^\Sigma$, $X_2=B_2^\Sigma-B_3^\Sigma$, where $(B_1^\Sigma, B_2^\Sigma, B_3^\Sigma)^T$ is the (centered) three-dimensional Brownian motion with covariance $\Sigma$, 
and let $\rho$ denote the correlation of $X_1(1)$ and $X_2(1)$. We show that whenever $-1<\rho<-1/2$, 
the corresponding limiting Brownian functional cannot be the largest  eigenvalue of any centered jointly Gaussian $3\times3$ Hermitian matrix. By contrast, in the nontrivial  \emph{cyclic} three-state case one has $\rho=-1/2$, and the traceless-GUE representation is recovered. Thus, the distinction is not simply one of alphabet size: the underlying covariance geometry determines whether a Gaussian random-matrix representation can persist.

The paper is organized as follows. Section 2 introduces the Gaussian Hermitian matrix model with correlated diagonal entries, derives its eigenvalue law and conjugation symmetries, and develops the discrete RSK and Gelfand--Tsetlin framework. Section 3 establishes the convergence of these discrete models to the Gelfand--Tsetlin patterns of the Gaussian matrix model, including positive semidefinite covariance matrices by approximation. Section 4 applies this framework to Markov random words, identifies the condition $Q\Sigma Q=cQ$, proves the rigidity theorem for largest-eigenvalue representations, and characterizes the corresponding Markov transition matrices. Section 5 gives explicit three- and four-state counterexamples showing that outside this structure a Gaussian random-matrix representation may fail altogether.  Section 6 extends the random-matrix representation to Markov chains with non-uniform stationary distributions, yielding blockwise Gaussian random-matrix limits for the rows of the associated RSK Young diagrams.  This approach is then used to tackle, via a sandwiching argument, the infinite countable alphabet case.  The paper concludes with a final remark presenting some perspectives and open questions connected to the previous results.

\section{Correlated-Diagonal Gaussian Ensembles and Discrete RSK Models}

This section develops the random-matrix and discrete RSK framework used in the sequel. We first introduce the Gaussian Hermitian ensemble $\mathcal M_\Sigma$, in which the diagonal vector has covariance $\Sigma$, and study its eigenvalue distribution and conjugation symmetries. We then formulate the model at the level of Gelfand--Tsetlin patterns and construct, through matrix RSK applied to independent geometric arrays, a discrete approximation of these patterns. The key ingredient is a localized exponential change of measure from the standard geometric-RSK model, chosen as the discrete counterpart of the change of measure from the GUE to $\mathcal M_\Sigma$. Section~3 shows that, after centering and scaling, this discrete model indeed converges to the Gelfand--Tsetlin pattern of ${\cal M}_\Sigma$.

Let $H = (h_{ij})_{1\le i,j \le M}$ be an 
$M \times M$ Hermitian random matrix where the upper-triangular off-diagonal 
entries are iid with distribution $N(0,1)_{\CC}$ (with real
and imaginary parts $\Re h_{ij}\sim N(0, 1/2)$ and $\Im h_{ij}\sim  N(0,1/2)$) while the diagonal elements form a centered Gaussian vector ${\bf d} = {\bf d}(H)$ which is independent of the off-diagonal
elements, ${\bf d}(H)=(h_{11},\ldots,h_{MM})^T\sim\mathcal N(0,\Sigma)$. 
Let ${\cal M}_\Sigma$ denote the ensemble of such random matrices, and so when 
$\Sigma=I$, this is the standard GUE under our normalization.

To continue with our notation, the set of $M\times M$ Hermitian matrices is denoted by ${\cal H}_M$; it is a finite-dimensional real Hilbert space, with respect to the Hilbert-Schmidt inner product $\langle X, Y\rangle _{HS} = \Tr (XY^*)$.  
The entrywise Lebesgue measure 
is $dH = \prod_{i=1}^M dh_{ii}\prod_{1 \leq i <j \leq M} d^2 h_{ij}$, where $d^2 h_{ij} = d(\Re h_{ij}) d (\Im h_{i
j})$, while $dg$ denotes normalized Haar probability measure on $U(M)$, the unitary group of degree $M$.

A non-identity covariance matrix $\Sigma$ for the diagonal vector generally destroys the full unitary invariance of the GUE and we now describe the remaining symmetries. Indeed, $\Sigma$ is the covariance matrix of the vector
$(h_{11},\ldots,h_{MM})\in\mathbb R^M$, 
and is not the covariance operator of $H$, viewed as a Gaussian random element of the real Hilbert space ${\cal H}_M$. Therefore, under conjugation $H\mapsto UHU^*$, the covariance $\Sigma$ does not in general transform according to $\Sigma\mapsto U\Sigma U^*$ for a general unitary matrix $U$; the diagonal entries of $UHU^*$ involve both the diagonal and off-diagonal entries of $H$. 

The full symmetry group of the ensemble ${\cal M}_\Sigma$ should therefore be 
defined directly by
$$
G_\Sigma = \left\{U\in U(M):UHU^*\stackrel{d}{=}H
\text{ for }H\in {\cal M}_\Sigma\right\}, 
$$
where $\stackrel{d}{=}$ indicates equality in distribution.   
Certain symmetries are immediate. First, every diagonal unitary matrix
$$
D=\operatorname{diag}(e^{i\theta_1},\ldots,e^{i\theta_M}), 
$$ 
belongs to $G_\Sigma$. Indeed, conjugation by $D$ leaves the diagonal vector unchanged and maps
$$
h_{ij}\longmapsto e^{i(\theta_i-\theta_j)}h_{ij},
\qquad i\ne j.
$$
Since the off-diagonal entries are independent circular complex Gaussian random variables, their joint law is unchanged.

Next, let $P_\sigma$ be the permutation matrix associated with a permutation $\sigma\in\mathfrak S_M$, the symmetric group of $\{1, 2, \dots, M\}$. Then
$$
P_\sigma H P_\sigma^*
\stackrel d=H, 
$$
whenever
$P_\sigma\Sigma P_\sigma^T=\Sigma$.  
Indeed, the off-diagonal iid Gaussian law is invariant under simultaneous permutation of rows and columns, while the diagonal vector is transformed into $P_\sigma {\bf d}(H)$, whose covariance is $P_\sigma\Sigma P_\sigma^T$.

Thus, $G_\Sigma$ always contains the diagonal torus $U(1)^M$, together with the permutations preserving $\Sigma$. In particular, it contains the corresponding semidirect product
$$ 
U(1)^M\rtimes
\{\sigma\in\mathfrak S_M:
P_\sigma\Sigma P_\sigma^T=\Sigma\}=U(1)^M\rtimes \operatorname{Aut}_{\rm perm}(\Sigma).$$

For a generic covariance matrix $\Sigma$, these are the natural residual symmetries of the model. Additional continuous unitary symmetries may occur for special choices of $\Sigma$, but they must be determined from the covariance operator of the entire Gaussian matrix $H$, rather than from the commutant of $\Sigma$ as an $M\times M$ matrix.

The ordinary GUE is recovered when the covariance of the diagonal entries agrees with the variance dictated by the isotropic Gaussian structure on ${\cal H}_M$. Under the normalization used here, this is $\Sigma=I$. In this case the density of $H$ depends only on $
\operatorname{Tr}(H^2)$, 
and therefore $$
UHU^*\stackrel d=H
\qquad\text{for every }U\in U(M).
$$
Hence $G_I=U(M)$. 

The condition $\Sigma=I$ characterizes the standard GUE within our
normalization, but it is not necessary for full unitary invariance.
For example, let $H_0\in\mathcal M_I$ and let $Z\sim N(0,b)$,
$b>0$, be independent of $H_0$. Then
\[
H:=H_0+ZI\in\mathcal M_{I+b\mathbf1\mathbf1^T}.
\]
Indeed, its off-diagonal entries are unchanged, while
$\operatorname{Cov}(d(H))=I+b\mathbf1\mathbf1^T$.
Moreover, for every $U\in U(M)$,
\[
UHU^*=UH_0U^*+ZI\stackrel d=H_0+ZI=H.
\]
Thus $G_{I+b\mathbf1\mathbf1^T}=U(M)$.
These examples lie outside the hypotheses of Proposition~2.2,
since $I+b\mathbf1\mathbf1^T$ acts as the identity on
$\mathbf1^\perp$.

For general $\Sigma$, however, full unitary invariance is lost.
As will become important in Section~4, such invariance is not
necessary for the Brownian last-passage functional to admit a
Gaussian random-matrix representation. The relevant condition
there is instead $Q\Sigma Q=cQ$, for some $c\ge 0$.

\medskip

To begin our study, we derive, when $\Sigma$ is invertible, the 
joint probability density function of the eigenvalues $\mu_1, \dots, \mu_M$ of the elements of ${\cal M}_\Sigma$. 
(Below, $\Delta(\mu) = \prod_{1\leq i<j\leq M} (\mu_i-\mu_j)$ is the usual Vandermonde determinant.)  The convergence results below are all that is needed to handle singular covariance matrices via approximations with positive definite ones.  However, for a singular covariance matrix $\Sigma$, a representation for the law of the eigenvalues akin to \eqref{densityinvert} can also be obtained, in the sense of distributions, and involves the Moore--Penrose pseudoinverse and pseudodeterminant.  The convergence results extend to singular covariance matrices by approximation with positive definite covariance matrices, as established in Theorem~\ref{thm:lemma21singgen}, without requiring a density formula in the singular case.

\begin{prop}\label{prop:JPDF}
Let $\Sigma$ be invertible and let $H \in {\cal M}_\Sigma$.  The joint probability density function of the ordered eigenvalues $\mu_1\geq \ldots \geq \mu_M$ 
of $H$ is

\begin{equation}\label{densityinvert}
f_{\Sigma}(\mu) = \frac{M!\Delta(\mu)^2}{\sqrt{(2\pi )^M \det \Sigma}\prod_{j=1}^M j!} \exp{ \left ( - \frac{1}{2} \sum_{i=1}^M \mu_i^2 \right )} \mathcal{I}(J,\Sigma),
\end{equation}
where
\begin{equation*}
\mathcal{I}(J,\Sigma) = \int_{U(M)} \exp \left ( \frac{1}{2} \mathbf v (U)^T(I - \Sigma^{-1}) \mathbf v (U) \right ) dg(U), 
\end{equation*}
with $J = \diag ( \mu_1, \ldots, \mu_M)$ and where 
$\mathbf v (U) \in \RR^M$ is the vector of diagonal
elements of $UJ U^*$.  
\end{prop}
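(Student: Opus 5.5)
We write the density of $H\in\mathcal M_\Sigma$ with respect to $dH$, compare it with the GUE density, and then apply the Weyl integration formula.

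\textbf{Density of $H$.} By independence, the density is the product of the Gaussian density of the diagonal vector $\mathbf d=(h_{11},\dots,h_{MM})^T$ and that of the off-diagonal entries. The latter are $\prod_{i<j}\pi^{-1}e^{-|h_{ij}|^2}$, since $\Re h_{ij},\Im h_{ij}\sim N(0,1/2)$. This gives
\[
p_\Sigma(H)=\frac{\pi^{-M(M-1)/2}}{\sqrt{(2\pi)^M\det\Sigma}}\exp\Bigl(-\tfrac12\mathbf d^T\Sigma^{-1}\mathbf d-\sum_{i<j}|h_{ij}|^2\Bigr).
\]
Next we use $\Tr(H^2)=\sum_i h_{ii}^2+2\sum_{i<j}|h_{ij}|^2$. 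Adding and subtracting $\frac12\mathbf d^T\mathbf d$ in the exponent yields
\[
p_\Sigma(H)=\frac{\pi^{-M(M-1)/2}}{\sqrt{(2\pi)^M\det\Sigma}}\, e^{-\frac12\Tr(H^2)}\exp\Bigl(\tfrac12\mathbf d(H)^T(I-\Sigma^{-1})\mathbf d(H)\Bigr).
\]
The first factor is unitarily invariant. The second depends on $H$ only through its diagonal.

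\textbf{Weyl integration formula.} Parametrize $H=UJU^*$ with $J=\diag(\mu_1,\dots,\mu_M)$, $\mu_1\ge\dots\ge\mu_M$ and $U\in U(M)$. For any integrable $F$ on $\mathcal H_M$, the Weyl formula reads
\[
\int_{\mathcal H_M}F(H)\,dH=C_M\int_{\mu_1\ge\dots\ge\mu_M}\Delta(\mu)^2\int_{U(M)}F(UJU^*)\,dg(U)\,d\mu .
\]
We apply it to $F=p_\Sigma\cdot\phi(\text{eigenvalues})$ with $\phi$ a symmetric test function. Since $\Tr(H^2)=\sum\mu_i^2$ and $\mathbf d(UJU^*)=\mathbf v(U)$, the joint density of the ordered eigenvalues is
\[
C_M\frac{\pi^{-M(M-1)/2}}{\sqrt{(2\pi)^M\det\Sigma}}\,\Delta(\mu)^2e^{-\frac12\sum\mu_i^2}\,\mathcal I(J,\Sigma).
\]
The Haar integral is well defined: $\mathbf v(U)$ ranges over a compact set (the permutohedron of $\mu$), so the exponent is bounded.

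\textbf{Normalization constant.} This is the main point of care, and we avoid computing $C_M$ directly. At $\Sigma=I$ the expression reduces to the GUE eigenvalue density, with $\mathcal I=1$. In our normalization the known GUE density on the ordered chamber is
\[
\frac{M!}{(2\pi)^{M/2}\prod_{j=1}^M j!}\,\Delta(\mu)^2e^{-\frac12\sum\mu_i^2}.
\]
This follows from the Mehta integral $\int_{\RR^M}\Delta^2e^{-\sum x_i^2/2}\,dx=(2\pi)^{M/2}\prod_{j=1}^M j!$. Multiplying by $M!$ converts the unordered density into the ordered one.

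The constant $C_M\pi^{-M(M-1)/2}$ does not depend on $\Sigma$. Matching with the GUE case therefore gives
\[
C_M\pi^{-M(M-1)/2}=\frac{M!}{\prod_{j=1}^M j!}.
\]
Substituting this into the general formula yields \eqref{densityinvert}.

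The only remaining subtleties are the following:
\begin{itemize}
\item the ordering convention;
\item the fact that the Weyl map is a bijection off a null set, namely matrices with repeated eigenvalues, modulo the torus;
\item consistency of the Haar normalization, which the matching argument absorbs.
\end{itemize}
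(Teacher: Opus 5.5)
Your proof is correct and follows the paper's route: you rewrite $p_\Sigma$ as the GUE weight $e^{-\frac12\operatorname{Tr}(H^2)}$ times $\exp\bigl(\frac12\mathbf d^T(I-\Sigma^{-1})\mathbf d\bigr)$ and then apply the Weyl integration formula. The only difference is bookkeeping: you integrate over the ordered chamber and fix the Weyl constant by normalizing the $\Sigma=I$ case with Mehta's integral, whereas the paper quotes the explicit constant $\pi^{M(M-1)/2}/\prod_{j=1}^M j!$ for the integral over $\RR^M$ and recovers the factor $M!$ by showing that the integrand, including $\mathcal I(J,\Sigma)$, is permutation-symmetric.
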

\begin{proof}
Let $H \in {\cal M}_\Sigma$.  Since the (column) vector of diagonal
elements ${\bf d} = {\bf d}(H)$ is independent of the off-diagonal
entries, and the off-diagonal entries are iid, the density of the entries is 
  
\begin{equation*}
p_\Sigma(H) = \frac{\exp \left (- \frac{1}{2} \mathbf{d}^T \Sigma^{-1}\mathbf{d} \right )}{\sqrt{(2\pi)^M \det \Sigma}} \prod_{1 \leq i < j \leq M} \frac{\exp \left ( - |h_{ij}|^2\right)}{\pi}.
\end{equation*}

\noindent
Since $\displaystyle \Tr(H^2) = \sum_{i=1}^M h_{ii}^2 + 2 \sum_{1\leq i < j \leq M} |h_{ij}|^2$, 
\begin{equation*}
p_\Sigma(H) = \frac{1}{\sqrt{2^M \pi^{M^2} \det \Sigma}} \exp \left (\frac{1}{2} \mathbf d^T (I - \Sigma^{-1}) \mathbf{d} - \frac{1}{2} \Tr(H^2) \right ).
\end{equation*}
By the Weyl integration formula \cite[Theorems 2.8 and 2.10]{VA} (with the above normalization of $dH$ and normalized Haar measure $dg$), for $F$ nonnegative Borel,
\begin{equation}
\label{WeylIntegration}
\int_{\mathcal{H}_M} F(H) dH = \frac{\pi^{M(M-1)/2}}{\prod_{j=1}^M j!} \int_{\RR^M} \int_{U(M)} F(UJU^*) \Delta (\mu)^2 dg(U) d \mu.
\end{equation}
By linearity, the same holds for every integrable Borel $F$; apply \eqref{WeylIntegration} to $F_+$ and $F_-$, and then to complex-valued functions.
By cyclicity of the trace, $\Tr((UJU^*)^2) = \Tr(J^2) = \sum_{i=1}^M \mu_i^2$, so
\begin{equation*}
p_\Sigma(UJU^*) = \frac{1}{\sqrt{2^M \pi^{M^2} \det \Sigma}} \exp \left ( - \frac{1}{2} \mathbf{v} (U)^T(\Sigma^{-1}-I) \mathbf{v}(U) - \frac{1}{2}\sum_{i=1}^M \mu_i^2 \right ).
\end{equation*}

Let $\varphi$ be a bounded Borel function on the ordered Weyl chamber
$\mathcal W_M=\{\mu\in\mathbb R^M:\mu_1\ge\cdots\ge\mu_M\}$, 
and let $S$ denote its symmetric extension to $\mathbb R^M$, namely
$S(\mu)=\varphi(\mu^\downarrow)$,
where $\mu^\downarrow$ denotes the decreasing rearrangement of $\mu$. If $\mu(H)$ denotes the ordered eigenvalue vector of $H$, set
$F(H)=\varphi(\mu(H))p_\Sigma(H)$.  
For $J=\operatorname{diag}(\mu_1,\ldots,\mu_M)$, the ordered eigenvalue vector of $UJU^*$  is $\mu^\downarrow$; hence $\varphi(\mu(UJU^*))=S(\mu)$.
Then, the right-hand side of \eqref{WeylIntegration} is
\begin{equation*}
\int_{\mathcal H_M} \varphi(\mu(H)) p_\Sigma(H) dH = \int_{\RR^M} {S}(\mu) \!\left(\frac{\Delta(\mu)^2\exp \left (- \frac{1}{2} \sum_{i=1}^M \mu_i^2 \right )}{\sqrt{(2\pi )^{M} \det \Sigma}\prod_{j=1}^M j!}  \mathcal{I}(J,\Sigma) \right)\! d\mu.
\end{equation*}

From the right invariance of the Haar measure, the unitary integral is symmetric under permutations of the entries of $\mu$ (if $P$ is the corresponding permutation matrix, then $PJP^*$ permutes the entries of $\mu$, and the substitution $U\mapsto UP$ in Haar measure proves the symmetry) and since $\Delta(\mu)^2$ and $\sum_{i=1}^M \mu_i^2$ are symmetric in $\mu$, so is the whole integrand. Observe that $\RR^M$ is the union of the $M!$ Weyl chambers up to a set of Lebesgue measure zero. Thus, restricting to the chamber $\{\mu_1 \geq \ldots \geq \mu_M\}$ introduces a factor of $M!$. We conclude that the joint density of the ordered eigenvalues $\mu_1 \geq \ldots \geq \mu_M$ is
\begin{equation*}
f_{\Sigma}(\mu) = \frac{M!\Delta(\mu)^2\exp{ \left ( - \frac{1}{2} \sum_{i=1}^M \mu_i^2 \right )} }{\sqrt{(2\pi )^{M} \det \Sigma}\prod_{j=1}^M j!}  \mathcal{I}(J,\Sigma).
\end{equation*}
\end{proof}

With the help of Proposition~\ref{prop:JPDF}, the following delineates conjugation symmetries of ${\cal M}_\Sigma$ and how the GUE symmetry breaks down in the ensemble introduced above.  Below, recall that 
the conjugation symmetry group of ${\cal M}_\Sigma$ is 
$$ 
G_\Sigma :=
\left\{U\in U(M): UHU^*\stackrel{d}{=}H\right\}, 
$$ 
where $\stackrel{d}{=}$ indicates equality in distribution.  

\begin{prop}\label{prop:symmetry-group}
Let $\Sigma$ be positive definite, let $H\in{\cal M}_\Sigma$ and let $1\notin\operatorname{spec}(\Sigma)$.  
Then, 
\begin{equation}\label{sym}
G_\Sigma =\left\{DP:
D\in U(1)^M, P \text{ is a permutation matrix such that}\ 
P\Sigma P^T=\Sigma \right\}.
\end{equation}
Equivalently, $G_\Sigma
\simeq U(1)^M\rtimes\operatorname{Aut}_{\rm perm}(\Sigma)$, 
where
\begin{equation*}
\operatorname{Aut}_{\rm perm}(\Sigma)=
\left\{P:\ P\text{ is a permutation matrix and }P\Sigma P^T=\Sigma
\right\}.
\end{equation*}
\end{prop}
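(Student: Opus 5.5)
The inclusion ``$\supseteq$'' has already been checked in the discussion preceding Proposition~\ref{prop:JPDF}. Conjugation by a diagonal unitary $D$ fixes ${\bf d}(H)$ and rotates each off-diagonal entry by a phase, and a circular Gaussian is invariant under such rotations. Conjugation by $P_\sigma$ with $P_\sigma\Sigma P_\sigma^T=\Sigma$ permutes the iid off-diagonal entries and maps ${\bf d}$ to $P_\sigma{\bf d}\sim\mathcal N(0,\Sigma)$. Products of such matrices therefore lie in $G_\Sigma$. The work is the reverse inclusion. My plan is to compare covariance operators, or equivalently densities, rather than to use the eigenvalue density $f_\Sigma$.

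Fix $U\in G_\Sigma$. Since $H$ is a centered Gaussian element of $\mathcal H_M$ with density $p_\Sigma$, the law of $UHU^*$ has density $p_\Sigma(U^*XU)$, because conjugation preserves $dH$. Equality in law gives $p_\Sigma(U^*XU)=p_\Sigma(X)$ for all $X$. By the computation in the proof of Proposition~\ref{prop:JPDF}, and since $\Tr(X^2)$ is conjugation invariant, this reduces to
\[
q(\mathbf d(U^*XU))=q(\mathbf d(X)),\qquad q(v):=v^T(I-\Sigma^{-1})v,\quad X\in\mathcal H_M .
\]
Here $A:=I-\Sigma^{-1}$ is invertible precisely because $1\notin\operatorname{spec}(\Sigma)$.

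Next I test this identity on rank-one projections $X=xx^*$ with $\|x\|=1$. Then $\mathbf d(X)=(|x_i|^2)_i$ and $\mathbf d(U^*XU)=(|(U^*x)_i|^2)_i$. Write $\pi(x)=(|x_i|^2)_i$, so the condition reads $q(\pi(U^*x))=q(\pi(x))$. To extract linear information I polarize by applying the identity to $X=\sum_k t_k x_kx_k^*$. Since $\mathbf d$ is linear and the identity holds for every Hermitian $X$, polarizing the quadratic form gives
\[
\pi(U^*x)^TA\,\pi(U^*y)=\pi(x)^TA\,\pi(y),\qquad x,y\in\CC^M .
\]
Taking $x=y=e_j$ gives $\pi(e_j)=e_j$. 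If $U^*e_j$ had two nonzero coordinates, I would perturb $y$ near $e_k$ and vary the phases and moduli. This should show that the linear map $X\mapsto \mathbf d(U^*XU)$, composed with $A$, must preserve the bilinear form $\langle \mathbf d(\cdot),A\,\mathbf d(\cdot)\rangle$. Since $A$ is nondegenerate, that form has kernel exactly the zero-diagonal matrices. Hence conjugation by $U$ must map the subspace $\mathcal O=\{X:\mathbf d(X)=0\}$ onto itself, because that subspace is the radical of the form.

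The remaining step, which I expect to be the main obstacle, is to show that any unitary $U$ with $U\mathcal O U^*=\mathcal O$ is a monomial matrix $DP$. Since conjugation is an isometry for $\langle\cdot,\cdot\rangle_{HS}$, it then also preserves $\mathcal O^\perp$, the real diagonal matrices. So $U\,\mathrm{diag}(\mathbb R^M)\,U^*$ is again the diagonal algebra, which is a maximal abelian subalgebra. Then $UE_{jj}U^*$ is a diagonal rank-one projection, hence equal to some $E_{\sigma(j)\sigma(j)}$. This forces $Ue_j\in\CC e_{\sigma(j)}$, so $U=DP_\sigma$. Finally, with $U=DP_\sigma$, the identity $q(P_\sigma^T v)=q(v)$ for all $v$ gives $P_\sigma A P_\sigma^T=A$, hence $P_\sigma\Sigma P_\sigma^T=\Sigma$. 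The semidirect product description follows because $P D P^{-1}$ is again diagonal unitary. The delicate point to verify carefully is the radical argument: the identity must hold on all of $\mathcal H_M$, not just on the rank-one projections, and this is what makes polarization legitimate.
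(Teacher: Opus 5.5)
Your proposal is correct and follows essentially the paper's route. You reduce $UHU^*\stackrel{d}{=}H$ to invariance of the quadratic form $\mathbf d(\cdot)^TA\,\mathbf d(\cdot)$ on $\mathcal H_M$, polarize, and identify the radical as the zero-diagonal subspace; this is the paper's $\ker(D_0^*AD_0)=\ker D_0$, which uses surjectivity of $\mathbf d$ and invertibility of $A$. You then pass to the orthogonal complement $\mathcal D$ and use the rank-one projections $UE_{jj}U^*$ to obtain $U=DP$ with $P\Sigma P^T=\Sigma$, exactly as the paper does.

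The detour through rank-one test matrices $xx^*$ and perturbations near $e_k$ is unnecessary, and as written it is too vague to carry any weight. Polarizing the identity directly on all of $\mathcal H_M$ already gives equality of the bilinear forms, which is all the radical argument needs.
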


\begin{proof}
From the density of the entries of $H$,
$$
p_\Sigma(H)=C_\Sigma
\exp\left\{
-\frac12\operatorname{Tr}(H^2)+\frac12 {\bf d}(H)^T(I-\Sigma^{-1}){\bf d}(H)
\right\},
$$
where $C_\Sigma>0$ is a normalizing constant.  Set $A:=I-\Sigma^{-1}$.  
Since $1$ is not an eigenvalue of $\Sigma$,
$A=\Sigma^{-1}(\Sigma-I)$
is invertible.  For $U\in U(M)$, $\operatorname{Tr}(H^2)$ is invariant
under conjugation, i.e., $\operatorname{Tr}\left((UHU^*)^2\right)=
\operatorname{Tr}(H^2)$, and therefore since $H\mapsto UHU^*$ preserves Lebesgue measure on ${\cal H}_M$, $UHU^*\stackrel{d}{=}H$ if and only if the corresponding densities are equal almost everywhere and everywhere by continuity, i.e., 
if and only if for every $H\in {\cal H}_M$, 
\begin{equation}\label{2.4}
{\bf d}(UHU^*)^T A\,{\bf d}(UHU^*)=
{\bf d}(H)^T A\,{\bf d}(H).
\end{equation}
Let us define 
$D_0:{\cal H}_M\longrightarrow\mathbb R^M,\ D_0(H)={\bf d}(H)$, 
and, for fixed $U\in U(M)$, define
$D_U(H):={\bf d}(UHU^*)$.  
Then \eqref{2.4} becomes
$\langle A D_UH,D_UH\rangle=
\langle A D_0H,D_0H\rangle\ \text{for every }H\in\mathcal H_M$. Next, since $A$ is symmetric, the real polarization identity applied to \eqref{2.4} gives 
$\langle A D_UH,D_UK\rangle
=\langle A D_0H,D_0K\rangle$ for every $H,K\in{\cal H}_M$.  
Equivalently,
\begin{equation}\label{2.5}
D_U^*AD_U=D_0^*AD_0.
\end{equation}
Next, we determine the kernels of these two operators. Since $D_0$ is
surjective and $A$ is invertible,
\begin{equation}\label{2.6}
\ker(D_0^*AD_0)=\ker D_0.
\end{equation}
Indeed, the inclusion from right to left is immediate. Conversely, if
$D_0^*AD_0H=0$, 
then, for every $K\in\mathcal H_M$,
$0=\langle D_0^*AD_0H,K\rangle_{\mathrm{HS}}=\langle AD_0H,D_0K\rangle$. 
Since $D_0$ is onto, $D_0K$ ranges over all of $\mathbb R^M$, and so  
$AD_0H=0$. 
The invertibility of $A$ gives $D_0H=0$.

The map $D_U$ is also surjective, since $D_U=D_0\circ\operatorname{Ad}_U,
\operatorname{Ad}_U(H)=UHU^*$, 
and $\operatorname{Ad}_U$ is an invertible linear transformation of
$\mathcal H_M$. The same arguments therefore lead to 
\begin{equation}\label{2.7}
\ker(D_U^*AD_U)=\ker D_U.
\end{equation}
Combining \eqref{2.5}--\eqref{2.7} leads to 
\begin{equation}\label{2.8}
\ker D_U=\ker D_0.
\end{equation}

Let $\mathcal D=\left\{
\operatorname{diag}(x_1,\ldots,x_M):
x_i\in\mathbb R \right\}\subset\mathcal H_M$ 
be the real subspace of diagonal Hermitian matrices. Then
$\ker D_0=\mathcal D^\perp$. 
Moreover, $\ker D_U
=\operatorname{Ad}_{U^*}(\mathcal D^\perp)$. 
Thus \eqref{2.8} implies that $\operatorname{Ad}_{U^*}(\mathcal D^\perp)
=\mathcal D^\perp$. 
Since $\operatorname{Ad}_U$ is orthogonal for the Hilbert--Schmidt
inner product, taking orthogonal complements gives
$\operatorname{Ad}_{U^*}(\mathcal D^\perp)=\mathcal D^\perp\Longrightarrow 
\operatorname{Ad}_{U^*}(\mathcal D)=\mathcal D$,
and, since $\operatorname{Ad}_{U^*}$ is its inverse,
$\operatorname{Ad}_U(\mathcal D)=\mathcal D$, i.e., 
\begin{equation}\label{2.9}
U\mathcal D U^*=\mathcal D.
\end{equation}
We now use the standard description of the normalizer of the diagonal
algebra. Let $E_{jj}=e_je_j^*, 1\leq j\leq M$. 
By \eqref{2.9},
$UE_{jj}U^*\in\mathcal D$. 
If $u_j=Ue_j$ denotes the $j$th column of $U$, then
\[
UE_{jj}U^*=u_ju_j^*.
\]
This is a rank-one orthogonal projection belonging to $\mathcal D$.
The only rank-one orthogonal projections in $\cal D$ are 
$E_{11},\ldots,E_{MM}$. 

Hence, for every $j$, there exists an index $\sigma(j)\in\{1,\dots,M\}$  and a phase $e^{i\theta_j}$ such that $u_j=e^{i\theta_j}e_{\sigma(j)}$.  
Since the columns of $U$ form an orthonormal basis, the indices
$\sigma(j)$ are distinct, and therefore $\sigma\in\mathfrak S_M$ is a permutation.
Let $P$ be the corresponding permutation matrix, defined by $Pe_j=e_{\sigma(j)}$, 
and let $D_\theta =\operatorname{diag}(e^{i\theta_1},\dots,e^{i\theta_M})$.
Then $U=PD_\theta$. Since $PD_\theta=(PD_\theta P^T)P$ 
and $PD_\theta P^T$ is again diagonal unitary, $U$ can equivalently be written
as $U=DP$ 
for some $D\in U(1)^M$.

It remains to determine which permutation matrices are allowed.
Conjugation by a diagonal unitary leaves the diagonal of a Hermitian
matrix unchanged. Under the convention $Pe_j=e_{\sigma(j)}$, one has
${\bf d}(PHP^T)=P{\bf d}(H)$, and therefore ${\bf d}(DPHP^*D^*)=P{\bf d}(H)$.
Thus, \eqref{2.4} reduces to $x^TP^TAPx=x^TAx$ 
for every $x\in\mathbb R^M$,
or equivalently
\begin{equation}\label{2.11}
P^TAP=A.
\end{equation}
Since $A=I-\Sigma^{-1}$  and $P^TP=I$, this is equivalent to
$P^T\Sigma^{-1}P=\Sigma^{-1}$.  Taking inverses gives
$P^T\Sigma P=\Sigma$. Since $P$ is orthogonal, this is equivalent to  
\begin{equation}\label{2.12}
P\Sigma P^T=\Sigma.  
\end{equation}
We have therefore proved that 
$$G_\Sigma\subseteq
\left\{DP:\ D\in U(1)^M,\ P\Sigma P^T=\Sigma\right\}.$$

Conversely, suppose that $U=DP$, where $D$ is diagonal unitary and $P$ is a permutation matrix satisfying
$P\Sigma P^T=\Sigma$.  Conjugation by $D$ fixes the diagonal entries and sends
$h_{ij}\longmapsto e^{i(\theta_i-\theta_j)}h_{ij}$. 
Since the off-diagonal entries are independent circular complex Gaussian random variables, their joint law is unchanged. Conjugation by $P$ simultaneously permutes the diagonal coordinates and the off-diagonal entries. The off-diagonal iid law is unchanged, while
$P{\bf d}(H)\sim{\cal N}(0,P\Sigma P^T)={\cal N}(0,\Sigma)$.  Thus $DPHP^*D^*\stackrel{d}{=}H$, 
proving \eqref{sym}.
\end{proof}

\begin{rem}\label{rem:symmetry-exceptional}
(i) Under the hypotheses of Proposition~\ref{prop:symmetry-group}, if $\Sigma$ has no nontrivial coordinate-permutation
symmetry, i.e., no non-identity coordinate permutation preserving it,
then $\operatorname{Aut}_{\rm perm}(\Sigma)$ is trivial and then $G_\Sigma=U(1)^M$.  

If $\Sigma=\sigma^2I$ with $\sigma^2\ne 1$, 
every permutation preserves
$\Sigma$, while $1\notin\operatorname{spec}(\Sigma)$, and therefore 
$G_{\sigma^2I}
= U(1)^M\rtimes\mathfrak S_M$.  

Finally, when $\Sigma=I$, the preceding argument in the proposition does not apply because
$A=0$. In this case, the density reduces to
$p_I(H)=C_I\exp\left\{-\frac12{\Tr}(H^2)\right\}$.  
Since ${\Tr}((UHU^*)^2)={\Tr}(H^2)$ for every
$U\in U(M)$, the law is invariant under arbitrary unitary conjugation, and as expected $G_I=U(M)$.

(ii) The requirement $1\notin\operatorname{spec}\Sigma$ 
is equivalent to the invertibility of $I-\Sigma^{-1}$ and is used in
the above proof to deduce from the invariance of the quadratic perturbation
that conjugation by $U$ preserves the diagonal subspace. If
$1\in\operatorname{spec} \Sigma$ but $\Sigma\neq I$, this argument no
longer applies, and additional symmetries are not excluded by the
proposition. The exact symmetry condition remains
$D_U^*(I-\Sigma^{-1})D_U=D_0^*(I-\Sigma^{-1})D_0$. 
Thus, the exceptional case requires a separate analysis of the null space
of $I-\Sigma^{-1}$. 
\end{rem}

Now that the eigenvalue density has been obtained and the differences from the GUE setting have been described, let us present some elements 
that lead to a discrete approximation of these eigenvalues.   
By the Cauchy interlacing theorem, 
the eigenvalues of the successive leading principal submatrices of a Hermitian matrix $H$ form a Gelfand–Tsetlin pattern (whose bottom row is the spectrum of the matrix).  In other words, if $H^{(k,k)}$, $1\le k \le M$, 
denotes the upper-left $k\times k$ principal submatrix of $H$, then the eigenvalues $\lambda_1(H^{(k,k)}) \ge \dots \ge \lambda_k(H^{(k,k)})$ rearranged in nonincreasing order are related to each other by the Cauchy interlacing inequalities
$\lambda_i(H^{(k+1,k+1)}) \ge \lambda_i(H^{(k,k)}) \ge \lambda_{i+1}(H^{(k+1,k+1)})$ for ${1 \le i \leq k < M}$, and are often arranged in a triangular array known as a Gelfand--Tsetlin (GT) pattern with bottom row $\lambda_1(H) \ge \lambda_2(H)\ge \dots \ge \lambda_M(H)$.

The next result is then \cite[Proposition 4.7]{Baryshnikov} (the set of GT patterns with fixed bottom row is a GT polytope; without fixed bottom row one speaks of the GT cone).  

\begin{prop}\label{prop:baryshnikovprop47}
Fix $\mu_1 \geq \ldots \geq \mu_M$, let $J = \diag(\mu_1, \ldots, \mu_M)$, and let $U \in U(M)$ be Haar-distributed, and form the Gelfand--Tsetlin pattern $x=(\mu_i^k)_{1 \leq i \leq k \leq M}$ from the eigenvalues of the upper-left $k\times k$ principal submatrices of $U J U^*$, $1\le k \le M$. Then, the law of $x$ is the uniform measure on the Gelfand--Tsetlin polytope, with respect to Lebesgue measure on its affine span, with bottom row $\mu = (\mu_1, \dots, \mu_M)$, i.e., conditional on the bottom row $\mu$, the induced GT pattern is uniformly distributed on the GT polytope with bottom row $\mu$.
\end{prop}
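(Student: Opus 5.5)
The plan is the classical one: induct on $M$ using the corners process. Write $A=UJU^*$ with $U$ Haar. The key fact is the one-step statement: the eigenvalues $\nu=(\nu_1\ge\cdots\ge\nu_{M-1})$ of the upper-left $(M-1)\times(M-1)$ corner of $A$ have density on the interlacing set $\{\mu_1\ge\nu_1\ge\mu_2\ge\cdots\ge\nu_{M-1}\ge\mu_M\}$ proportional to
\[
\frac{\Delta(\nu)}{\Delta(\mu)}.
\]
Moreover, conditionally on $\nu$, the corner itself is again of the form $VJ'V^*$ with $J'=\diag(\nu)$ and $V$ Haar on $U(M-1)$.

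The second part follows from invariance. Conjugating $A$ by $\diag(W,1)$, with $W\in U(M-1)$, preserves the law of $A$ (right/left invariance of Haar measure) and conjugates the corner by $W$. Hence the corner's law is $U(M-1)$-conjugation invariant, and conditionally on its spectrum its eigenvector frame is Haar.

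Granting the one-step fact, iterate down the rows. The joint density of all rows $(\mu^{M-1},\dots,\mu^1)$ given $\mu^M=\mu$ is the telescoping product
\[
\prod_{k=2}^{M}\frac{(k-1)!\,\Delta(\mu^{k-1})}{\Delta(\mu^{k})}\cdot\mathbf 1_{\{\text{interlacing}\}}
=\frac{\prod_{k=1}^{M-1}k!}{\Delta(\mu)}\,\mathbf 1_{\mathrm{GT}(\mu)},
\]
with the convention $\Delta(\mu^1)=1$. This is constant on the GT polytope. Its total mass is one, which recovers the volume formula $\Delta(\mu)/\prod_{k<M}k!$ as a consistency check. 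Hence the conditional law is uniform. Strictly speaking, the conditional-Haar step needs the corner spectrum to be simple; this holds a.s. for distinct $\mu_i$, and the case of repeated $\mu_i$ follows by continuity in $\mu$ of both sides.

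The main obstacle is the one-step density, which I would prove via the resolvent. Write $A=\begin{pmatrix}B&b\\ b^*&a\end{pmatrix}$ and diagonalize $B=W\diag(\nu)W^*$. Set $w=W^*b$ and $z$ a spectral parameter. The Schur complement gives
\[
\frac{\det(z-A)}{\det(z-B)}=z-a-\sum_{j}\frac{|w_j|^2}{z-\nu_j},
\]
so that $|w_j|^2=-\prod_i(\nu_j-\mu_i)/\prod_{i\ne j}(\nu_j-\nu_i)$ and $a=\sum\mu_i-\sum\nu_j$. Thus $\nu$ is in bijection with $(|w_1|^2,\dots,|w_{M-1}|^2)$, and positivity of these quantities is exactly interlacing.

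Now the last column $Ue_M$ of $U$ is uniform on the unit sphere of $\CC^M$. Equivalently, the last row of $U^*$ is uniform, so $(|u_{Mi}|^2)_i$ is uniform (Dirichlet$(1,\dots,1)$) on the simplex. I would therefore do the computation in the dual picture: take the eigenbasis of $A$ itself. The corner spectrum $\nu$ is determined by the weights $p_i=|(U^*e_M)_i|^2$ through the roots of $\sum_i p_i/(z-\mu_i)=0$. The map $p\mapsto\nu$ is a classical change of variables whose Jacobian is $\prod_{j}(\dots)$. From $\sum_i p_i/(z-\mu_i)=\prod_j(z-\nu_j)/\prod_i(z-\mu_i)$ one gets $p_i=\prod_j(\mu_i-\nu_j)/\prod_{k\ne i}(\mu_i-\mu_k)$, and the Jacobian $|\partial p/\partial\nu|$ evaluates via a Cauchy-determinant computation to $\Delta(\nu)/\Delta(\mu)$, up to sign. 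Combined with the uniform density $(M-1)!$ on the simplex, this gives the claimed density; the Cauchy-determinant Jacobian is the computation I would need to carry out carefully.
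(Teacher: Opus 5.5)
The paper gives no proof of this statement; it imports it from \cite[Proposition 4.7]{Baryshnikov}. Your argument is a genuinely different, self-contained route, and for strictly decreasing $\mu$ it is correct. The Jacobian you were unsure about works out as you predict. From $p_i=\prod_j(\mu_i-\nu_j)/\prod_{k\ne i}(\mu_i-\mu_k)$ one gets $\partial p_i/\partial\nu_j=-p_i/(\mu_i-\nu_j)$. Hence the Jacobian of $(\nu_1,\dots,\nu_{M-1})\mapsto(p_1,\dots,p_{M-1})$ is $\pm\prod_{i<M}p_i\,\det[(\mu_i-\nu_j)^{-1}]_{i,j<M}$, and the Cauchy determinant collapses this to $\pm\Delta(\nu)/\Delta(\mu)$. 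Since $p\mapsto\nu$ is a bijection from the open simplex onto the strictly interlacing set, the Dirichlet$(1,\dots,1)$ density $(M-1)!$ gives the one-step law.

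The invariance step is also fine, and it does not need a simple corner spectrum: with $V_0$ an independent Haar unitary, $V_0BV_0^*\stackrel{d}{=}B$ directly exhibits $B$ as $V\diag(\nu)V^*$ with $V$ Haar and independent of $\nu$. Since $\nu$ has a Lebesgue density, it is a.s. strictly decreasing, so the induction never meets a degenerate row. Compared with the citation, your route buys self-containedness and yields the volume formula $\Delta(\mu)/\prod_{k<M}k!$ as a by-product.

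Discard the first part of your last step, the computation in the eigenbasis of $B$. The claimed bijection between $\nu$ and $(|w_j|^2)_j$ is false already for $M=2$, where $|w_1|^2=(\mu_1-\nu_1)(\nu_1-\mu_2)$ is two-to-one in $\nu_1$. It plays no role in the dual computation that actually does the work.

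The one unjustified step is the extension to repeated $\mu_i$ ``by continuity in $\mu$ of both sides.'' Continuity in $\mu$ of the law of the pattern of $UJU^*$ is clear. Weak continuity of the right-hand side at degenerate $\mu$, however, is exactly the content of that case, and it is not automatic. Uniform measures on polytopes collapsing onto a lower-dimensional polytope need not converge to the uniform measure on the limit with respect to Lebesgue measure on its affine span. For example, the triangles with vertices $(0,0)$, $(1,0)$, $(0,\varepsilon)$ converge to the density $2(1-x)$ on $[0,1]$, not the uniform one.

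For GT polytopes the conclusion does hold, but it needs an argument, e.g.\ a fiber-volume comparison:
\begin{itemize}
\item The coordinates that are constant on $\mathrm{GT}(\mu)$ are squeezed into intervals of length $o(1)$ in $\mathrm{GT}(\mu^{(n)})$.
\item Every interlacing inequality linking a squeezed coordinate to a non-constant one, or a non-constant coordinate to a bottom-row entry, is strict on the relative interior of $\mathrm{GT}(\mu)$. Otherwise it would be tight on all of $\mathrm{GT}(\mu)$ and force that coordinate to be constant.
\item Consequently, over compact subsets of that relative interior, the fiber in the squeezed coordinates is, for large $n$, one fixed polytope independent of the remaining coordinates, and it contains every other fiber.
\end{itemize}
Since the statement explicitly allows $\mu_1\ge\cdots\ge\mu_M$ and refers to Lebesgue measure on the affine span, this case must be handled. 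For the paper's own purposes, where GUE-type spectra are a.s. simple, your nondegenerate argument is all that is needed.
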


For $H\in{\cal M}_\Sigma$ with $\Sigma\ne I$, the conditional distribution given the spectrum is not in general the Haar orbital measure of Proposition~\ref{prop:baryshnikovprop47}; the required modification will instead be obtained below by a change of measure from the GUE.  Before doing so, let us state the following simple assertion which underlies the identification, for binary or ternary alphabets and iid letters, of some Brownian functionals as spectra of traceless GUE matrices.   

\begin{lemma}\label{lem:summationidentity}
Let $H= (h_{ij})_{1\le i,j\le M}\in {\cal H}_{M}$ and $x=(\mu_i^k)_{1 \leq i \leq k \leq M}$ be the Gelfand--Tsetlin pattern of the eigenvalues of 
the upper-left $k\times k$ principal submatrices of $H$, $1\le k \le M$. Then, for every $1 \leq k \leq M$,
\begin{equation*}
\sum_{i=1}^k \mu_i^k - \sum_{i=1}^{k-1} \mu_i^{k-1} = h_{kk}.
\end{equation*}
\end{lemma}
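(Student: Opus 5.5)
The identity follows from the trace of successive principal submatrices. Since $\mu_1^k,\dots,\mu_k^k$ are the eigenvalues of $H^{(k,k)}$, counted with multiplicity, I would use that the sum of the eigenvalues of a Hermitian (indeed any square) matrix equals its trace. This gives
\[
\sum_{i=1}^k \mu_i^k=\Tr\bigl(H^{(k,k)}\bigr)=\sum_{j=1}^k h_{jj}.
\]
For $H^{(k,k)}$ this follows from the unitary diagonalization $H^{(k,k)}=V\diag(\mu^k)V^*$ and the cyclicity of the trace, in the same way that cyclicity was used in the proof of Proposition~\ref{prop:JPDF}. The ordering of the eigenvalues plays no role, since only their sum enters.

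Next I would write the same identity at level $k-1$, namely $\sum_{i=1}^{k-1}\mu_i^{k-1}=\sum_{j=1}^{k-1}h_{jj}$. Here $H^{(k-1,k-1)}$ is the upper-left $(k-1)\times(k-1)$ block of $H^{(k,k)}$, so its diagonal entries are $h_{11},\dots,h_{k-1,k-1}$. Subtracting the two identities telescopes the diagonal sums and leaves exactly $h_{kk}$.

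The only point needing care is the boundary case $k=1$. There the second sum is empty and equals $0$, and $\mu_1^1=h_{11}$ because $H^{(1,1)}=(h_{11})$, so the formula holds trivially. No step presents a real obstacle; the lemma is a direct consequence of the trace identity. As a consequence, iterating it recovers the diagonal vector ${\bf d}(H)$ from the row sums of the Gelfand--Tsetlin pattern.
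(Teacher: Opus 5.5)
Your proposal is correct and is essentially the paper's own proof: both identify $\sum_{i=1}^k \mu_i^k$ with $\Tr H^{(k,k)}$ and subtract the level-$(k-1)$ trace to leave $h_{kk}$. The paper handles the case $k=1$ the same way you do, by taking empty sums and $\Tr H^{(0,0)}$ to be zero.
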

\begin{proof}

\begin{equation*}
\sum_{i=1}^k \mu_i^k - \sum_{i=1}^{k-1} \mu_i^{k-1} ={\Tr}\, H^{(k,k)} - 
{\Tr} \, H^{(k-1,k-1)} = h_{kk}, 
\end{equation*}
where the empty sums and $\operatorname{Tr}H^{(0,0)}$ are understood to be zero.  
\end{proof}

We now introduce further notation, definitions and discretizations, which will form the basic ingredients, via a discrete approximation, for obtaining our limiting shape result in the next section.  Recall that given a (deterministic) word $u \in \{1, 2, \dots, M\}^K$, $K\ge 1$, the Robinson-Schensted-Knuth (RSK) correspondence/algorithm associates to $u$ a pair of Young tableaux having the same shape encoding the increasing-subsequence statistics of the word, e.g., \cite{Fulton}.  
We use two standard forms of the RSK correspondence. When RSK is applied to a word, we refer to it as the word RSK correspondence; when it is applied to a nonnegative integer matrix (or array), we refer to it as the matrix RSK correspondence. The latter is the version used below for the geometric array 
$w=(w_{ij})$.  We read the rows in increasing order; within each row, we read the columns in increasing order, inserting the letter $j$ exactly $w_{ij}$  times. 

Let $\mathrm{GT}_M$ be the integer Gelfand--Tsetlin cone, i.e.,
the set of triangular arrays
$(x_i^k)_{1\le i\le k\le M}\in \mathbb{N}^{M(M+1)/2}$, $\mathbb N =\{0, 1, 2, \dots\}$, 
satisfying the interlacing inequalities
$x_i^k \ge x_i^{k-1} \ge x_{i+1}^k$, $1\le i<k\le M$. 
For a deterministic array
$w=(w_{ij})_{1\le i\le N,\;1\le j\le M} \in \mathbb{N}^{N\times M}$,
let $x(w)=\bigl(x_i^k(w)\bigr)_{1\le i\le k\le M} \in \mathrm{GT}_M$ 
denote its Gelfand--Tsetlin pattern under the matrix RSK
correspondence, where, for each $1\le k\le M$,
$x_i^k(w)$, $1\le i\le k$, is the $i$th row length of the
RSK shape associated with the subarray consisting of the first
$k$ columns of $w$, with zero parts appended when the shape has
fewer than $k$ nonzero rows.

Let $W^{(N)} = (W_{ij}^{(N)})_{1 \leq i \leq N, 1 \leq j \leq M}$ be an array of iid geometric weights, with parameter $q \in (0,1)$, on $\NN^{N \times M}$, and so with mean $m_q = q/(1-q)$ and variance $v_q = q/(1-q)^2$. Define $X^{(N)} = x(W^{(N)})$ and let $\PP_N$ be the law of $X^{(N)}$ on $\GT.$ Let $x(H)$ be the Gelfand--Tsetlin pattern of the eigenvalues of the successive leading principal submatrices of $H$.

To continue with our notation, for $x \in \GT$, let
$S_k(x) = \sum_{i=1}^k x_i^k$, $1 \leq k \leq M$, with $S_0(x) =0$  
and let $d_k(x) = S_k(x) - S_{k-1}(x)$, $1 \leq k \leq M$.  
Let also $D(x) = (d_1(x), \ldots, d_M(x))^T \in \ZZ^M$, 
and $\hat D_N(x) = (D(x) - m_q N \one)/\sqrt{v_qN}$.  

\smallskip

With the help of this notation, we now define the following localized probability measure on $\GT$.  

\begin{defn}\label{def:localmeasure}
Let $(L_N)_{N\ge 1}$ be a sequence of reals such that, as $N \to +\infty$, $L_N \uparrow +\infty$ and $L_N/\sqrt{N} \to 0$.  Let $N_0 \geq M$ be a constant such that for every $N \geq N_0$, $[m_qN - L_N \sqrt{v_qN}, m_qN + L_N \sqrt{v_qN}]$ contains a nonnegative integer, and let $E_N = \left \{ x \in \GT: \max_{1 \leq i \leq M} \abs{\frac{x_i^M - m_q N}{\sqrt{v_qN}}} \leq L_N \right \}$.  For $N \geq N_0$ and $\Sigma$ positive definite, define $\Ploc$ on $\GT$ via 
\begin{equation*}
\frac{d\Ploc}{d \PP_N}(x) = \frac{1}{\Zloc} \exp \left ( \frac{1}{2} \hat D^T_N(x) ( I - \Sigma^{-1}) \hat D_N(x) \right ) {\bf 1}_{E_N}(x), 
\end{equation*} 
where 
\begin{equation*}
\Zloc = \EE \left [ \exp \left ( \frac{1}{2} \hat D^T_N(X^{(N)}) ( I -\Sigma^{-1}) \hat D_N(X^{(N)})\right ) {\bf 1}_{E_N}(X^{(N)})\right].
\end{equation*}
\end{defn}

The following proposition shows that the localized change of measure is well defined by verifying that its normalizing constant (partition function) is strictly positive and finite.

\begin{prop}
$0< \Zloc < +\infty$ for every $N \geq N_0.$
\end{prop}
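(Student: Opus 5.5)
Both bounds will be obtained separately. The key observation is that on the event $E_N$ the exponent is bounded by a deterministic constant depending only on $N$. Positivity then comes from showing that $E_N$ has positive $\PP_N$-probability.

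\emph{Finiteness.} Since $\Sigma$ is positive definite, $I-\Sigma^{-1}$ is symmetric with spectrum contained in $(-\infty,1)$. Hence
\[
\tfrac12\,\hat D_N^T(I-\Sigma^{-1})\hat D_N\le \tfrac12\,\lVert\hat D_N\rVert^2 .
\]
It therefore suffices to bound $\lVert\hat D_N(x)\rVert$ uniformly on $E_N$. For $x\in\GT$ one has $d_k(x)=S_k(x)-S_{k-1}(x)\ge 0$, because interlacing gives $x_i^k\ge x_i^{k-1}$ and $x_k^k\ge 0$.

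We also have $\sum_k d_k(x)=S_M(x)=\sum_i x_i^M$. On $E_N$ each $x_i^M\le m_qN+L_N\sqrt{v_qN}$, so
\[
0\le d_k(x)\le M\bigl(m_qN+L_N\sqrt{v_qN}\bigr).
\]
Consequently $\lVert\hat D_N(x)\rVert\le C_N$ for a finite constant $C_N$ depending only on $N,M,q,L_N$. The integrand is then bounded by $e^{C_N^2/2}$, and $\Zloc\le e^{C_N^2/2}<\infty$. The bound $\lVert I-\Sigma^{-1}\rVert\le 1+\lVert\Sigma^{-1}\rVert$ would work equally well here.

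\emph{Positivity.} The integrand is strictly positive on $E_N$, so it is enough to show $\PP_N(E_N)>0$, i.e.\ $\PP(X^{(N)}\in E_N)>0$. By the choice of $N_0$ there is an integer $n\ge0$ in $[m_qN-L_N\sqrt{v_qN},\,m_qN+L_N\sqrt{v_qN}]$. I will exhibit a specific array $w\in\NN^{N\times M}$, which has positive probability under the iid geometric law since every finite array does, such that every bottom row entry satisfies $x_i^M(w)=n$.

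For this I take $w$ with $w_{ij}=n$ if $i=j\le M$ and $w_{ij}=0$ otherwise; this uses $N\ge N_0\ge M$. The word read by RSK is then $1^n2^n\cdots M^n$. This word is weakly increasing, so its shape is the one-row diagram $(Mn)$, which is not what we want.

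Instead I choose the anti-diagonal placement $w_{i,M+1-i}=n$ for $i\le M$. The resulting word is $M^n(M-1)^n\cdots1^n$, and its RSK shape is $(n,n,\dots,n)$ with $M$ rows. This follows from Greene's theorem, since any $k$ disjoint weakly increasing subsequences cover at most $kn$ letters. Hence $x_i^M(w)=n$ for all $i$, so $w$ lies in the preimage of $E_N$, and $\PP_N(E_N)\ge\PP(W^{(N)}=w)>0$.

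\emph{Main obstacle.} The only step needing care is this combinatorial verification of the RSK shape of the anti-diagonal array. If there is any doubt about the reading convention, a direct insertion check suffices: inserting the blocks $M^n,(M-1)^n,\dots$ successively bumps each previous block down one row, which produces the $M\times n$ rectangle.
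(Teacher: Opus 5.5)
Your proof is correct and follows the paper's argument. For positivity, the paper uses exactly your anti-diagonal array $w_{i,M+1-i}=n$, with insertion word $M^n(M-1)^n\cdots 1^n$ and rectangular shape $(n^M)$; for finiteness, it notes that interlacing bounds every entry $x_i^k$ on $E_N$, so $E_N$ is a finite set on which the weight is bounded, which is only a shortcut for your explicit bound via $\lambda_{\max}(I-\Sigma^{-1})<1$ and $0\le d_k\le S_M$.
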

\begin{proof}
By interlacing and the definition of $E_N$, for every $1 \leq i \leq k \leq M$, 
\begin{equation*}
0 \leq x_M^M \leq x_i^k \leq x_1^M \leq \abs{x_1^M -m_qN} + \abs{m_qN} \leq L_N \sqrt{v_qN} + \abs{m_qN}.
\end{equation*} 
Hence $E_N$ is finite. Since the exponential weight is finite on $E_N$, it follows that $Z_N^{\Sigma,\mathrm{loc}}<\infty$.

Next, choose a nonnegative integer
$a\in[m_qN-L_N\sqrt{v_qN},\,m_qN+L_N\sqrt{v_qN}]$.
Since $N\ge M$, consider the deterministic $N\times M$ array
$w^{(a)}=(w_{ij}^{(a)})$ defined by $w_{ij}^{(a)} 
=\begin{cases}
a,&1\le i\le M,\ j=M+1-i,\\
0,&\text{otherwise}.
\end{cases}
$
The insertion word associated with $w^{(a)}$ by the matrix RSK correspondence is
$M^a(M-1)^a\cdots1^a$, under our chosen matrix RSK ordering convention, 
and successive row insertions show that its shape is the rectangular partition
$(a^M)=(a,\ldots,a)$.
Hence the bottom row of $x(w^{(a)})$ satisfies
$x^M(w^{(a)})=(a,\ldots,a)$, and therefore $x(w^{(a)})\in E_N$. Since the geometric distribution assigns positive mass to every nonnegative integer and the entries of $W^{(N)}$ are independent,
$\mathbb P\{W^{(N)}=w^{(a)}\}>0$.
It follows that $\mathbb P_N(E_N)>0$.  Since the exponential weight in the definition of $Z_N^{\Sigma,\mathrm{loc}}$ is positive, it follows that 
$Z_N^{\Sigma,\mathrm{loc}}>0$.
\end{proof}

The next result identifies the diagonal statistic as a linear image of the rescaled Gelfand--Tsetlin pattern and relates it directly to the column sums of the underlying array.  Below, let $\xi_N(x) = 
\left ( {(x_i^k-m_qN)}/{\sqrt{v_qN}} \right )_{1 \leq i \leq k \leq M} \in \RR^{M(M+1)/2}$.

\begin{prop}\label{prop:column-sum}
Let $N\ge 1$, let $w\in\mathbb N^{N\times M}$, and let
$x=x(w)$ be its matrix RSK Gelfand--Tsetlin pattern.
Define the linear map $T:\mathbb R^{M(M+1)/2}\to\mathbb R^M$ by
$T_k(y)=\sum_{i=1}^k y_i^k-\sum_{i=1}^{k-1}y_i^{k-1}$, $1\le k\le M$.  
Then, $T(\xi_N(x)) = \hat D_N(x)$ and for every $1 \leq k \leq M$,
\begin{equation*}
d_k(x) = \sum_{a=1}^N w_{ak}.
\end{equation*}
\end{prop}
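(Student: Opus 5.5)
The proposition has two parts: a linear-algebra identity, which is immediate, and a combinatorial identity about RSK, which carries the content. I will treat them in that order.

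For the first claim, I will apply $T$ coordinatewise to $\xi_N(x)$. The $k$th row of $\xi_N(x)$ has $k$ entries, each shifted by $-m_qN$ and divided by $\sqrt{v_qN}$. Hence
\[
T_k(\xi_N(x)) = \frac{\bigl(S_k(x)-k\,m_qN\bigr)-\bigl(S_{k-1}(x)-(k-1)m_qN\bigr)}{\sqrt{v_qN}} = \frac{d_k(x)-m_qN}{\sqrt{v_qN}}.
\]
This is the $k$th coordinate of $\hat D_N(x)$. The only point to check is that the row lengths differ by exactly one, so that exactly one centering term $m_qN$ survives. This is the discrete analogue of Lemma~\ref{lem:summationidentity}.

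For the second claim, I will use the stated convention for matrix RSK. The word is obtained by reading the rows of $w$ in increasing order, and within each row inserting letter $j$ exactly $w_{aj}$ times. Restricting to the first $k$ columns of $w$ amounts to deleting all letters larger than $k$ from this word, while keeping the relative order of the remaining letters.

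The key fact is the standard property of row insertion: letters $>k$ never bump letters $\le k$ into different positions relative to each other. Consequently, the subtableau of the insertion tableau $P(u)$ formed by the entries $\le k$ equals $P(u|_{\le k})$. I would cite this from \cite{Fulton}. It can also be proved by induction on the length of the word, checking that inserting a letter $>k$ leaves the $\le k$ part unchanged, and that inserting a letter $\le k$ acts on the $\le k$ part exactly as it would on $P(u|_{\le k})$.

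It follows that $x^k(w)$ is the shape of the entries $\le k$ in $P$. The total size of this shape is
\[
S_k(x)=\#\{\text{letters}\le k\text{ in the word}\}=\sum_{j\le k}\sum_{a=1}^N w_{aj}.
\]
In fact this identity only needs the RSK shape of the restricted word to be a partition of its length, so consistency across $k$ is not required for it. Taking differences in $k$ gives $d_k(x)=\sum_{a=1}^N w_{ak}$, with $S_0=0$.

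I expect the main obstacle to be only the bookkeeping of the convention. I need the definition of $x_i^k(w)$ to refer to the RSK shape of the subarray of the first $k$ columns, with zero parts appended. Under that definition, $S_k$ equals the number of entries of that subarray by the size-preservation property of RSK. The interlacing/restriction lemma is then only needed to confirm that $x(w)\in\mathrm{GT}_M$, not for the identity itself. Appended zeros do not affect $S_k$.
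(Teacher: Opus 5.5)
Your proposal is correct and is the paper's argument. You take differences in $k$ of $S_k(x)=\sum_{a=1}^N\sum_{b\le k}w_{ab}$, which holds because RSK preserves size on the first $k$ columns, and the $m_qN$ terms cancel directly in $T_k(\xi_N(x))$. As you note yourself, the restriction/interlacing property of row insertion is not needed for the identity.
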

\begin{proof}
Let $P_k$ be the insertion tableau from the matrix RSK correspondence applied to the sub-array of the first $k$ columns of $w$. The number of boxes of $P_k$ is
\begin{equation*}
\sum_{a=1}^N \sum_{b=1}^k w_{ab} = \sum_{i=1}^k x_i^k,
\end{equation*}
where the left-hand side is deduced because $P_k$ contains one box for each unit of total weight in the sub-array by construction, and the right-hand side comes directly from the shape of $P_k$. Then, 
\begin{equation*}
d_k(x) = \sum_{i=1}^k x_i^k - \sum_{i=1}^{k-1} x_i^{k-1}= \sum_{a=1}^N \sum_{b=1}^k w_{ab} - \sum_{a=1}^N \sum_{b=1}^{k-1} w_{ab}= \sum_{a=1}^N w_{ak}.
\end{equation*}
Next, observe that for $1 \leq k \leq M$,
\begin{align*}
T_k(\xi_N(x)) &= \sum_{i=1}^k \frac{x_i^k- m_qN}{\sqrt{v_qN}} - \sum_{i=1}^{k-1} \frac{x_i^{k-1}-m_qN}{\sqrt{v_qN}}= \frac{d_k(x)-m_qN}{\sqrt{v_qN}}.
\end{align*}
Thus, $T(\xi_N(x)) = \hat D_N(x)$ as desired.
\end{proof}

\section{Convergence of Discrete RSK Models to Gelfand--Tsetlin Patterns}

The purpose of this section is to establish the convergence of the discrete RSK models introduced above to the Gelfand–Tsetlin patterns of the Gaussian matrix ensemble ${\cal M}_\Sigma$. The argument is based on a change of measure from the GUE case; a similar change of measure approach was used in \cite{dieker}. Indeed, the exponential tilt defining $\Ploc$ is the discrete counterpart of the Radon–Nikodym factor relating ${\cal M}_\Sigma$ to the GUE. After establishing the uniform integrability needed to pass this tilt through the known GUE/RSK limit, we obtain the desired convergence for positive definite $\Sigma$. We then extend the result to positive semidefinite covariance matrices by approximation.

Throughout this section, unless explicitly stated otherwise, $\Sigma$ is positive definite, and whenever we consider $E_N$, $\Ploc$, or $\Zloc$, then $N \geq N_0$. Additionally, for $y \in \RR^{M(M+1)/2}$, define   
\begin{equation*}
    \Psi(y) = \Psi_\Sigma(y) = \exp \left ( \frac{1}{2} T(y)^T(I - \Sigma^{-1}) T(y) \right ).
\end{equation*}

To pass the discrete change of measure through the GUE/RSK scaling limit, we first establish a uniform $L^p, p>1$ bound for its unnormalized exponential weight, thus providing a uniform integrability result.  

\begin{thm}\label{thm:uniform}
There exists $p>1$ such that
\begin{equation*}
\sup_{N \geq N_0} \EE_{\PP_N} \left ( \Psi(\xi_N(X^{(N)}))^p \one_{E_N}(X^{(N)})\right ) < \infty.
\end{equation*}
\end{thm}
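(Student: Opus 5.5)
By Proposition~\ref{prop:column-sum}, $T(\xi_N(X^{(N)}))=\hat D_N(X^{(N)})$. Since $d_k(X^{(N)})=\sum_{a=1}^N W^{(N)}_{ak}$, the vector $\hat D_N$ is exactly the vector of normalized column sums of the geometric array. Its coordinates are therefore independent, and each is a normalized sum of $N$ iid geometric variables. The random-matrix structure plays no role at this point. The quantity to control is
\[
\EE\Bigl[\exp\Bigl(\tfrac p2\,\hat D_N^T(I-\Sigma^{-1})\hat D_N\Bigr)\one_{E_N}\Bigr].
\]
Let $\lambda_{\max}$ be the largest eigenvalue of $I-\Sigma^{-1}$; note that $\lambda_{\max}<1$. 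Bound the quadratic form by $\lambda_{\max}^+|\hat D_N|^2$, where $\lambda_{\max}^+=\max(\lambda_{\max},0)$. If $\lambda_{\max}\le 0$, the integrand is at most $1$ and there is nothing to prove. Otherwise, choose $p>1$ with $p\lambda_{\max}<1$, and set $\theta=p\lambda_{\max}/2<1/2$. By independence of the columns, it suffices to show
\[
\sup_N \EE\Bigl[e^{\theta Y_N^2}\one_{\{|Y_N|\le C L_N\}}\Bigr]<\infty,\qquad Y_N=\frac{\sum_{a=1}^N(G_a-m_q)}{\sqrt{v_qN}}.
\]

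The first step is to transfer the localization from $E_N$ to the column sums. On $E_N$ every entry $x_i^k$ lies within $L_N\sqrt{v_qN}$ of $m_qN$, by interlacing as in the proof that $\Zloc<\infty$. Hence
\[
|d_k-m_qN|=\Bigl|\sum_{i\le k}(x_i^k-m_qN)-\sum_{i\le k-1}(x_i^{k-1}-m_qN)\Bigr|\le (2M-1)L_N\sqrt{v_qN}.
\]
So $\one_{E_N}\le\prod_k\one_{\{|Y_N^{(k)}|\le 2ML_N\}}$, and the expectation factorizes into $M$ identical one-dimensional terms.

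The second step, which is the main obstacle, is a Gaussian-type tail bound for $Y_N$ that is uniform in $N$ on the moderate-deviation range $|y|\le 2ML_N=o(\sqrt N)$. Write
\[
\EE\bigl[e^{\theta Y_N^2}\one_{\{|Y_N|\le c_N\}}\bigr]\le 1+\int_0^{c_N}2\theta y\,e^{\theta y^2}\,\PP(|Y_N|>y)\,dy.
\]
Then use the Chernoff bound. The centered geometric variable has log-MGF $\Lambda(s)=\frac{v_q}{2}s^2+O(s^3)$ near $0$. Taking $s=y/\sqrt{v_qN}$ gives
\[
\PP(|Y_N|>y)\le 2\exp\Bigl(-\tfrac{y^2}{2}\bigl(1-C\,y/\sqrt N\bigr)\Bigr).
\]
Since $y\le c_N$ and $c_N/\sqrt N\to0$, for $N$ large this is at most $2e^{-(1-\eta)y^2/2}$, where $\eta>0$ is fixed with $\theta<(1-\eta)/2$. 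The integrand is then dominated by $4\theta y\,e^{-((1-\eta)/2-\theta)y^2}$, which is integrable uniformly in $N$. The finitely many small $N\ge N_0$ are harmless, because $E_N$ is finite and each such expectation is finite.

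The localization $L_N/\sqrt N\to0$ is essential here. Without it, the large-deviation rate of the geometric law would fall below $y^2/2$ far out in the tails and could fail to beat $e^{\theta y^2}$. The care lies in choosing $\eta$ and $p$ consistently, and in using the exact Chernoff bound, with the cubic correction to $\Lambda$ handled through $y/\sqrt N\le 2ML_N/\sqrt N\to0$.
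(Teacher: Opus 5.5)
Your proof is correct. The reduction coincides with the paper's: you bound the quadratic form by the positive part of $\lambda_{\max}(I-\Sigma^{-1})$, use interlacing to pass from $E_N$ to the bound $(2M-1)L_N$ on each normalized column sum, and factorize via the independence of the columns from Proposition~\ref{prop:column-sum}. The two arguments differ only in the one-dimensional estimate.

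The paper linearizes the square with the Hubbard--Stratonovich identity $e^{cs^2}=(2\pi)^{-1/2}\int e^{-u^2/2+\sqrt{2c}\,us}\,du$. It therefore only needs the exponential moments $\EE e^{\sqrt{2c}\,uS_N}$, which factorize exactly over the iid summands. For $|u|\le\delta\sqrt{N/2c}$ it drops the indicator and uses $K(\theta)\le(1+\eta)\theta^2/2$. The truncation $|S_N|\le(2M-1)L_N$ is invoked only for large $|u|$, where the moment generating function is no longer controlled and may be infinite.

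You instead use the layer-cake formula together with a Chernoff moderate-deviation bound $\PP(|Y_N|>y)\le 2e^{-(1-\eta)y^2/2}$ for $0\le y\le 2ML_N$. Here the truncation keeps the Chernoff parameter $y/\sqrt{v_qN}$ in the range where the log-moment generating function is quadratic up to a factor $1+O(y/\sqrt N)$.

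Both arguments rest on the same two facts: the quadratic behavior near zero of the cumulant generating function of the geometric law, and $L_N=o(\sqrt N)$. Both give an explicit $N$-independent bound. Your route makes transparent that what is really needed is a sub-Gaussian tail with variance proxy slightly above one on the moderate-deviation range. The paper's route avoids tail estimates altogether and gets the bound from a single Gaussian integral.
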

\begin{proof}
Let $A:= I - \Sigma^{-1}$.  Since $\Sigma$ is positive definite, $\lambda_{\textrm{max}}(A) < 1$, and let $a := \max \left ( 0, \lambda_{\textrm{max}}(A) \right)$. If $a=0$, the result follows trivially as then $\Psi(y) \le 1$, for all $y$.
So, let $a>0$ and choose $1< p < 1/a$ and $\eta>0$ such that $(1 + \eta) p a < 1$ and set $c = pa/2$. For every $z\in {\mathbb R}^M,
z^TAz\le\lambda_{\max}(A)\|z\|^2\le a\|z\|^2$.  Thus, 
$$\Psi(y)^p = \exp\left\{\frac p2 T(y)^TAT(y)\right\}
\le
\exp\left\{
\frac{pa}{2}\|T(y)\|^2
\right\}
=\exp\{c\|T(y)\|^2\}, $$
and so 
${\Psi(y)^p\le
\exp\left(c\sum_{k=1}^M T_k(y)^2\right)}$. 

For $1 \leq k \leq M$ and $x \in E_N$, 
$\abs{({x_i^M-m_qN})/{\sqrt{v_qN}}}\le L_N$, and interlacing gives $x_M^M\le x_i^k\le x_1^M$, so actually
$|({x_i^k-m_qN})/{\sqrt{v_qN}}|\le L_N$, $1\le i\le k\le M$.
Therefore, 
$$ 
|T_k(\xi_N(x))| =
\left|
\sum_{i=1}^k
\frac{x_i^k-m_qN}{\sqrt{v_qN}}
-
\sum_{i=1}^{k-1}
\frac{x_i^{k-1}-m_qN}{\sqrt{v_qN}}
\right|\le 
(2k-1)L_N,$$ 
so $\abs{T_k ( \xi_N(x))} \le (2M-1) L_N$, $1 \le k \le M$.  

By Proposition~\ref{prop:column-sum}, under $\PP_N$, 
$T_k(\xi_N(X^{(N)}))=S_{N,k}:=N^{-1/2}\sum_{r=1}^N Y_r^{(k)}$,
where, for each $k$, the $Y_r^{(k)}$ are iid standardized geometric random variables, and the random variables $S_{N,1},\dots,S_{N,M}$ are independent.
(For different $k$, these variables are independent since they depend on different columns of the iid geometric array.) For convenience, write $Y_r:=Y_r^{(1)}$ and $S_N:=N^{-1/2}\sum_{r=1}^N Y_r$. The variables $S_{N,k}, 1\leq k\leq M$, are independent copies of $S_N$.  

Thus, since ${\bf 1}_{E_N}
\le \prod_{k=1}^M {\bf 1}_{\{|T_k|\le (2M-1)L_N\}}$, 
\begin{equation*}
\EE_{\PP_N} \left [ \Psi ( \xi_N(X^{(N)}))^p \one_{E_N}(X^{(N)}) \right] \leq \left ( \EE \left [ \exp (cS_N^2) \one_{\{\abs{S_N} \leq (2M-1)L_N\}} \right ] \right)^M.
\end{equation*}

Let $K(\theta) = \log \EE [\exp(\theta Y_1)]$. Since $K(\theta) = \theta^2/2 + O(|\theta|^3)$, for $|\theta|$ in a neighborhood of $0$, for the previously chosen $\eta > 0$, there exists $\delta > 0$ such that $K(\theta) \leq (1+ \eta)\theta^2/2$ for $\abs{\theta} \leq \delta$.  

Next, the Hubbard--Stratonovich identity
$e^{cs^2}
=\frac1{\sqrt{2\pi}}\int_{\mathbb R}
\exp\left(-\frac{u^2}{2}+\sqrt{2c}\,us\right)\,du$ leads to 
$\mathbb E\left[
e^{cS_N^2}\mathbf1_{\{|S_N|\le (2M-1)L_N\}}
\right]=
\frac1{\sqrt{2\pi}}
\int_{\mathbb R}e^{-u^2/2}
\mathbb E
\left[
e^{\sqrt{2c}\,uS_N}
\mathbf1_{\{|S_N|\le (2M-1)L_N\}}
\right]du$. 
Therefore, 
\begin{align*}
\EE \left [ \exp (cS_N^2) \one_{\{\abs{S_N} \leq (2M-1)L_N\}} \right ] &\leq \frac{1}{\sqrt{2 \pi}} \int_{\abs{u} \leq \frac{\delta {\sqrt N}}{\sqrt{2c}}}\exp \left  (-u^2/2+ NK\left (u\sqrt{2c/N}\right ) \right ) du\\ 
&+  \frac{1}{\sqrt{2 \pi}} \int_{\abs{u} > \frac{\delta {\sqrt N}}{\sqrt{2c}}}\exp \left (-u^2/2+ \abs{u}(2M-1)L_N\sqrt{2c} \right ) du. 
\end{align*}
We first estimate the first integral on the right-hand side. 
For $|u|\le\delta\sqrt{{N}/{2c}}$, dropping the indicator,
\begin{equation*}
\mathbb E e^{\sqrt{2c}uS_N}
=\mathbb E
\exp\left(
u\sqrt{\frac{2c}{N}}\sum_{r=1}^NY_r
\right)=\left(
\mathbb E
e^{u\sqrt{2c/N}Y_1}
\right)^N=
\exp\left\{
NK\left(u\sqrt{\frac{2c}{N}}\right)
\right\}. 
\end{equation*}
Therefore, $$-\frac{u^2}{2}+NK\left(u\sqrt{\frac{2c}{N}}\right)
\le -\frac{u^2}{2}+N\frac{1+\eta}{2}\frac{2cu^2}{N}
=-\frac12\left(
1-(1+\eta)pa \right)u^2,
$$
since $2c=pa$.  By construction,  $1-(1+\eta)pa>0$ 
and so the first integral is at most
$$\frac1{\sqrt{2\pi}}
\int_{\mathbb R}
e^{-\frac12[1-(1+\eta)pa]u^2}\,du
=
\frac1{\sqrt{1-(1+\eta)pa}}.$$

Now for the tail integral, on the event $|S_N|\le (2M-1)L_N$, it holds that 
$e^{\sqrt{2c}uS_N}
\le e^{\sqrt{2c}|u|(2M-1)L_N}$, and thus the tail contribution is bounded by
$$\frac1{\sqrt{2\pi}}
\int_{|u|>\frac{\delta \sqrt N}{\sqrt{2c}}}
\exp\left\{
-\frac{u^2}{2}
+\sqrt{2c}\,(2M-1)L_N|u|
\right\}du.$$
Since 
${L_N}/{\sqrt N}\to 0$, there exists $N_1$ such that, for $N\ge N_1$,
$8c(2M-1)L_N\le\delta\sqrt N$.  Hence for $N\ge N_1$ and $|u|>\delta\sqrt{N/2c}$,
$\sqrt{2c}\,(2M-1)L_N|u|
\le {u^2}/{4}$. 
Therefore
$-{u^2}/{2}
+\sqrt{2c}\,(2M-1)L_N|u|
\le-{u^2}/{4}$,
so
$$
\frac1{\sqrt{2\pi}}
\int_{|u|>\frac{\delta \sqrt N}{\sqrt{2c}}}
e^{-u^2/2+\sqrt{2c}(2M-1)L_N|u|}\,du
\le
\frac1{\sqrt{2\pi}}
\int_{\mathbb R}e^{-u^2/4}\,du
=\sqrt2.$$ 
Combining both estimates gives  
$$\EE \left [ \exp (cS_N^2) \one_{\{\abs{S_N} \leq (2M-1)L_N\}} \right ] \le 
\frac{1}{\sqrt{1 - (1+ \eta) pa}} + \sqrt{2}, 
$$
for all $N\ge N_1$.  
For the finitely many $N_0 \le N < N_1$, on $|S_N|\le (2M-1)L_N$,
$e^{cS_N^2}\le e^{c(2M-1)^2L_N^2}$, so the corresponding expectation is at most 
$\exp ( c(2M-1)^2 L_N^2)$, leading to a uniform bound for all $N \geq N_0$.
\end{proof}

To identify the limit of the discrete exponential tilt, we express the Gelfand--Tsetlin law of the $\Sigma$-ensemble as a change of measure from the GUE case.

\begin{prop}\label{prop:changearraymeasure}
Let $H_0 \in {\cal M}_I$ and let $H \in {\cal M}_\Sigma$. Then, for every bounded measurable $F: \RR^{M(M+1)/2} \to \CC$,
\begin{equation*}
\EE_\Sigma[F(x(H))] = \frac{\EE_I[F(x(H_0))\Psi(x(H_0))]}{\sqrt{\det \Sigma}}
\end{equation*}
and $\EE_I[\Psi(x(H_0))] = \sqrt{\det \Sigma}.$
\end{prop}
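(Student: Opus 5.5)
The plan is to work at the level of matrix entries and push forward to the pattern at the end. The identity is a Radon--Nikodym computation: the density $p_\Sigma$ of $H\in\mathcal M_\Sigma$ with respect to $p_I$ (the GUE density) is a function of the diagonal only, and that diagonal is a linear function of the Gelfand--Tsetlin pattern.

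First, from the formula for $p_\Sigma$ obtained in the proof of Proposition~\ref{prop:JPDF}, written with a general $\Sigma$ and with $\Sigma=I$, I will read off, for every $H\in\mathcal H_M$,
\begin{equation*}
\frac{p_\Sigma(H)}{p_I(H)}=\frac{1}{\sqrt{\det\Sigma}}\exp\left(\frac12\,{\bf d}(H)^T(I-\Sigma^{-1}){\bf d}(H)\right).
\end{equation*}
Indeed, the off-diagonal factors are identical in the two densities, and the normalizing constants differ only by $\sqrt{\det\Sigma}$.

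Second, I will express ${\bf d}(H)$ through the pattern. By Lemma~\ref{lem:summationidentity}, $h_{kk}=\sum_{i\le k}\mu_i^k-\sum_{i\le k-1}\mu_i^{k-1}=T_k(x(H))$, with $T$ the linear map of Proposition~\ref{prop:column-sum}. Hence ${\bf d}(H)=T(x(H))$, and the exponential factor above equals $\Psi(x(H))$.

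Third, for bounded measurable $F$, the map $H\mapsto F(x(H))$ is Borel, because eigenvalues of principal minors depend continuously on $H$. I can then write
\begin{equation*}
\EE_\Sigma[F(x(H))]=\int F(x(H))\,p_\Sigma(H)\,dH=\frac{1}{\sqrt{\det\Sigma}}\int F(x(H))\Psi(x(H))\,p_I(H)\,dH .
\end{equation*}
The right-hand side is $\EE_I[F(x(H_0))\Psi(x(H_0))]/\sqrt{\det\Sigma}$.

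Finally, taking $F\equiv1$ gives $\EE_I[\Psi(x(H_0))]=\sqrt{\det\Sigma}$. Alternatively, this follows directly from the Gaussian integral $\EE\exp(\frac12 Z^T(I-\Sigma^{-1})Z)=\det(\Sigma^{-1})^{-1/2}$ for $Z\sim\mathcal N(0,I)$.

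The only point needing care is integrability. $\Psi$ may be unbounded when $I-\Sigma^{-1}$ has positive eigenvalues. However, $\Psi(x(H))p_I(H)=\sqrt{\det\Sigma}\,p_\Sigma(H)$ is an integrable density, since $I-(I-\Sigma^{-1})=\Sigma^{-1}$ is positive definite. Thus every integral above is finite and the manipulations are justified, first for nonnegative $F$ and then, by splitting into positive and negative parts, for real and then complex $F$.
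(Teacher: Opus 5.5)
Your proposal is correct and follows the paper's proof essentially step for step. You use the density ratio $p_\Sigma/p_I=(\det\Sigma)^{-1/2}\exp\bigl(\tfrac12\mathbf d^T(I-\Sigma^{-1})\mathbf d\bigr)$, the identification $\mathbf d(H)=T(x(H))$ via Lemma~\ref{lem:summationidentity}, the change of measure, and $F\equiv1$ for the normalization, exactly as the paper does; your extra remarks on measurability and on integrability (via $\Psi(x(H))p_I(H)=\sqrt{\det\Sigma}\,p_\Sigma(H)$), which the paper leaves implicit, are correct.
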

\begin{proof}
Letting $p_\Sigma$ and $p_{I}$ be the respective matrix densities of $H$ and $H_0$, given 
in the proof of Proposition~\ref{prop:JPDF}, observe that
\begin{equation*}
\frac{p_\Sigma(H_0)}{p_{I}(H_0)} = \frac{1}{\sqrt{\det \Sigma}} \exp 
\left ( \frac{1}{2} \mathbf d(H_0)^T (I-\Sigma^{-1}) \mathbf d (H_0)\right ).
\end{equation*}
Therefore, using $T(x(H_0)) = \mathbf d(H_0)$ given in Lemma~\ref{lem:summationidentity}, and for emphasis, denoting by 
${\mathbb E}_I$ and $\EE_\Sigma$ the expectations with respect to the respective laws of $H_0$ and $H$, 
\begin{align*}
\EE_\Sigma [F(x(H))] &= \EE_I \left [ \frac{F(x(H_0))}{\sqrt{\det \Sigma}} \exp \left ( \frac{1}{2} \mathbf d(H_0)^T (I-\Sigma^{-1}) \mathbf d (H_0)\right ) \right]\\
&= \frac{1}{\sqrt{\det \Sigma}} \EE_I\left [ F(x(H_0)) \Psi(x(H_0))\right ].
\end{align*}
Letting $F=1$ yields $\EE_I[\Psi(x(H_0))] = \sqrt{\det \Sigma}.$
\end{proof}

Combining the uniform $L^p$--bound with the Gaussian change-of-measure identity yields the weak convergence of the localized discrete RSK model to the Gelfand–Tsetlin pattern of the $\Sigma$-ensemble.

\begin{thm}\label{thm:lemma21gen}
Let $H \in \mathcal M_\Sigma.$ Then, under $\Ploc$, as $N\to +\infty$, $\xi_N(X^{(N)})\Longrightarrow x(H)$ in $\RR^{M(M+1)/2}.$
\end{thm}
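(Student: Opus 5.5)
The plan is to reduce the statement to the known GUE case through the change of measure, using Theorem~\ref{thm:uniform} for uniform integrability and Proposition~\ref{prop:changearraymeasure} to identify the limit. The starting input is the classical result of \cite{Baryshnikov,GTW,BGH}: under $\PP_N$, with $N\to\infty$, $\xi_N(X^{(N)})\Longrightarrow x(H_0)$, where $H_0\in\mathcal M_I$. The variances match our normalization because the column sums $d_k$ are standardized by $\sqrt{v_qN}$, giving a standard $N(0,1)$ diagonal, exactly as in $\mathcal M_I$.

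Next I will show that the localization is asymptotically harmless, namely $\PP_N(E_N)\to1$. By interlacing, $x_1^M\le \sum_k d_k^+$-type bounds are not needed. Instead, use tightness: $\xi_N(X^{(N)})$ converges in law, so $\max_i|\xi_N(X^{(N)})^M_i|$ is tight, and since $L_N\uparrow\infty$ we get $\PP_N(E_N^c)\to0$. Hence $\one_{E_N}(X^{(N)})\to1$ in probability under $\PP_N$. By Skorokhod or Slutsky, the pair $(\xi_N(X^{(N)}),\one_{E_N}(X^{(N)}))$ converges jointly in law to $(x(H_0),1)$.

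Fix a bounded continuous $F$. Since $\Psi$ is continuous, $F(\xi_N)\Psi(\xi_N)\one_{E_N}\Longrightarrow F(x(H_0))\Psi(x(H_0))$ by the continuous mapping theorem. By Theorem~\ref{thm:uniform} these variables are bounded in $L^p$ for some $p>1$, uniformly in $N\ge N_0$, and are therefore uniformly integrable. Convergence in law plus uniform integrability gives
\[
\EE_{\PP_N}\bigl[F(\xi_N)\Psi(\xi_N)\one_{E_N}\bigr]\to \EE_I\bigl[F(x(H_0))\Psi(x(H_0))\bigr].
\]
Taking $F\equiv1$ yields $\Zloc\to\EE_I[\Psi(x(H_0))]=\sqrt{\det\Sigma}>0$ by Proposition~\ref{prop:changearraymeasure}. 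Note that $\Psi(\xi_N(x))$ is exactly the tilt in Definition~\ref{def:localmeasure}, because $T(\xi_N(x))=\hat D_N(x)$ by Proposition~\ref{prop:column-sum}. Dividing the two limits gives
\[
\EE_{\Ploc}[F(\xi_N)]\to \frac{\EE_I[F(x(H_0))\Psi(x(H_0))]}{\sqrt{\det\Sigma}}=\EE_\Sigma[F(x(H))],
\]
again by Proposition~\ref{prop:changearraymeasure}, which is the claimed weak convergence.

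The main obstacle is the uniform integrability, since $\Psi$ grows like a Gaussian exponential when $I-\Sigma^{-1}$ has positive eigenvalues. Theorem~\ref{thm:uniform} has already resolved this, which is precisely why the localization $\one_{E_N}$ was introduced. The only remaining care point is checking that the GUE/RSK input is stated for the full Gelfand--Tsetlin pattern, not just the bottom row, with the present centering $m_qN$ and scaling $\sqrt{v_qN}$. I would cite the joint version from \cite{GTW,BGH} (equivalently the geometric LPP/Brownian minor process limit) for this.
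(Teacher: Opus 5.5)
Your proposal is correct and follows the paper's proof essentially step by step. The common steps are: the BGH Lemma~2.1 input $\xi_N(X^{(N)})\Rightarrow x(H_0)$ under $\PP_N$; tightness of the bottom row to get $\one_{E_N}(X^{(N)})\to1$ in probability; Slutsky and continuous mapping for $F(\xi_N)\Psi(\xi_N)\one_{E_N}$; uniform integrability from Theorem~\ref{thm:uniform}; and identification of both the limit of $\Zloc$ and the tilted limit via Proposition~\ref{prop:changearraymeasure}. The stray sentence about ``$\sum_k d_k^+$-type bounds'' contributes nothing and can simply be deleted.
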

\begin{proof}
Let $H_0 \in \mathcal M_I$ and recall that \cite[Lemma 2.1]{BGH} asserts 
\begin{equation*}
\xi_N(X^{(N)}) \Rightarrow x(H_0), 
\end{equation*}
and therefore the family of laws of $\xi_N(X^{(N)}), N\ge N_0$, is tight.  
Since the bottom-row projection is continuous, the corresponding family of bottom-row laws is also tight.  Therefore, fixing $\varepsilon >0$, we may choose $R< \infty$ such that
\begin{equation*}
\sup_{N \geq N_0} \PP_N \left ( \max_{1 \leq i \leq M} \abs{(\xi_N(X^{(N)}))_i^M}> R \right ) < \varepsilon.
\end{equation*}
Next, choose an integer $N_\varepsilon \geq N_0$ such that $L_N \geq R$ for every $N \geq N_\varepsilon.$ By Definition \ref{def:localmeasure}, $\PP_N(X^{(N)} \not \in E_N) < \varepsilon$ for $N \geq N_{\varepsilon}$, so $\one_{E_N} (X^{(N)}) \stackrel{N\to +\infty}\longrightarrow 1$ in probability under $\PP_N.$

For a bounded continuous $F: \RR^{M(M+1)/2} \to \RR$, the continuity of $F$ and of $\Psi$, as well as Slutsky's theorem using $\one_{E_N} (X^{(N)}) \to 1$ in probability under $\PP_N$, lead to 
\begin{equation*}
F(\xi_N(X^{(N)}))\Psi(\xi_N(X^{(N)}))\one_{E_N}(X^{(N)}) \Longrightarrow F(x(H_0)) \Psi(x(H_0)).
\end{equation*}
For bounded $F$, Theorem~\ref{thm:uniform} gives an $L^p$-bound for some $p>1$, since 
$
|F(\xi_N)\Psi(\xi_N){\bf 1}_{E_N}|^p
\le \|F\|_\infty^p \Psi(\xi_N)^p{\bf 1}_{E_N}$,
which in turn gives uniform integrability. 
Therefore,
\begin{equation*}
\EE_{\PP_N} \left [ F(\xi_N(X^{(N)})) \Psi(\xi_N(X^{(N)})) \one_{E_N} (X^{(N)}) \right] \longrightarrow \EE_I [F(x(H_0)) \Psi(x(H_0))].
\end{equation*}
Taking $F=1$ and using Proposition~\ref{prop:changearraymeasure},
\begin{equation*}
\Zloc \longrightarrow \EE_I [\Psi(x(H_0))] = \sqrt{\det \Sigma}.
\end{equation*}
Therefore,
\begin{align*}
\EE_{\Ploc} [F(\xi_N(X^{(N)}))] & = \frac{1}{\Zloc} \EE_{\PP_N} \left [ F(\xi_N(X^{(N)})) \Psi(\xi_N(X^{(N)})) \one_{E_N} (X^{(N)}) \right]\\
&\longrightarrow \frac{1}{\sqrt{\det \Sigma}} \EE_I [F(x(H_0)) \Psi(x(H_0))] 
= \EE_\Sigma [F(x(H))],
\end{align*}
where the last equality follows from Proposition~\ref{prop:changearraymeasure}.
\end{proof}

We now remove the standing assumption of positive definiteness and treat positive semidefinite covariance matrices via an approximation scheme.  
We extend the convergence to possibly singular covariance matrices by regularizing $\Sigma$ on its kernel and letting the regularization vanish as $N\to +\infty$.

Below, $\mathbb{P}_N^{\Sigma_N, \mathrm{loc}}$ is the local measure given in Definition \ref{def:localmeasure} but taking  $\Sigma_N$ instead of $\Sigma$.   

\begin{thm}\label{thm:lemma21singgen}
Let $H \in \mathcal M_\Sigma$ and let $\Sigma$ be positive semidefinite. Then, there exists a deterministic non-increasing sequence $\varepsilon_N>0$ with $\varepsilon_N\to 0$
such that, as $N \to +\infty$,
\begin{equation*}
\xi_N(X^{(N)} ) \Longrightarrow x(H)
\end{equation*}
under $\mathbb{P}_N^{\Sigma_N, \mathrm{loc}}$, where $\Sigma_N = \Sigma+ \varepsilon_N P^{\perp}$ and $P^\perp$ is the orthogonal projection onto the kernel of $\Sigma$.  
\end{thm}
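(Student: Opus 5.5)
The plan is a diagonal argument built on Theorem~\ref{thm:lemma21gen}. For fixed $\varepsilon>0$, the matrix $\Sigma+\varepsilon P^\perp$ is positive definite. Indeed, $\Sigma$ and $P^\perp$ commute, and $\Sigma+\varepsilon P^\perp$ acts as $\Sigma$ on $(\ker\Sigma)^\perp$, where it is positive definite, and as $\varepsilon I$ on $\ker\Sigma$. Theorem~\ref{thm:lemma21gen} therefore applies to $\Sigma+\varepsilon P^\perp$.

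\textbf{Step 1 (the limit objects converge).} Let $H_\varepsilon\in\mathcal M_{\Sigma+\varepsilon P^\perp}$. I would couple it with $H\in\mathcal M_\Sigma$ by keeping the same off-diagonal entries and taking diagonal $\mathbf d+\sqrt{\varepsilon}P^\perp Z$, with $Z\sim\mathcal N(0,I)$ independent of everything else. Then $H_\varepsilon\to H$ almost surely. Eigenvalues of principal minors are continuous (Weyl), so $x(H_\varepsilon)\Rightarrow x(H)$ as $\varepsilon\to0$.

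\textbf{Step 2 (metrize and diagonalize).} Let $d_{BL}$ be the bounded-Lipschitz metric, which metrizes weak convergence on $\RR^{M(M+1)/2}$. Take $\varepsilon=1/j$. By Theorem~\ref{thm:lemma21gen}, for each $j$ there is $N_j\ge N_0$ such that
\[
d_{BL}\bigl(\mathrm{Law}_{\mathbb P_N^{\Sigma+P^\perp/j,\mathrm{loc}}}(\xi_N(X^{(N)})),\ \mathrm{Law}(x(H_{1/j}))\bigr)<1/j
\]
for all $N\ge N_j$. We may take $N_j$ strictly increasing. Set $\varepsilon_N=1/j$ for $N_j\le N<N_{j+1}$, and $\varepsilon_N=1$ for $N<N_1$. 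This sequence is deterministic, positive, non-increasing, and tends to $0$. For $N_j\le N<N_{j+1}$ we have $\Sigma_N=\Sigma+P^\perp/j$ and $N\ge N_j$, so the displayed bound holds at $N$. The triangle inequality with Step 1 then gives
\[
d_{BL}\bigl(\mathrm{Law}(\xi_N),\mathrm{Law}(x(H))\bigr)\le 1/j+d_{BL}\bigl(x(H_{1/j}),x(H)\bigr)\to0 .
\]
One bookkeeping point: $E_N$, $L_N$ and $N_0$ in Definition~\ref{def:localmeasure} do not depend on $\Sigma$, so the local measures with $\Sigma_N$ are well defined for all $N\ge N_0$ by the proposition $0<\Zloc<\infty$.

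\textbf{Main obstacle.} The only delicate point is that Theorem~\ref{thm:lemma21gen} gives convergence for each fixed $\varepsilon$ but no uniformity in $\varepsilon$. This is exactly why the statement asks only for the existence of some sequence $\varepsilon_N$, and the diagonal choice above handles it without any uniform estimate. The uniform $L^p$ bound of Theorem~\ref{thm:uniform} would actually survive as $\varepsilon\to0$, since $\lambda_{\max}(I-\Sigma_N^{-1})<1$ stays bounded away from $1$ on $\ker\Sigma$. That route would give a direct argument for any $\varepsilon_N\to0$, but it is not needed here.
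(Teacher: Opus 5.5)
Your proof is correct and is essentially the paper's argument: the paper also combines Theorem~\ref{thm:lemma21gen} for fixed $\varepsilon$ with $x(H_\varepsilon)\Rightarrow x(H)$ and a diagonal choice of $\varepsilon_N$. The differences are cosmetic: the Prokhorov metric instead of $d_{BL}$, weak convergence of $\mathcal N(0,\Sigma_\varepsilon)$ instead of your coupling, and levels $c_k$ chosen with $\rho(\nu^{(c_k)},\nu)\le 1/(2k)$ instead of $\varepsilon=1/j$.

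Your closing aside is wrong, however. The $L^p$ bound of Theorem~\ref{thm:uniform} does stay uniform, but the tilt degenerates on $\ker\Sigma$ and the normalizing constants tend to $0$, so uniform integrability no longer identifies the limit. In fact the conclusion can fail. Take $M=2$, $q=1/2$, and $\ker\Sigma$ spanned by $(1,\alpha)^T$ with $\alpha\notin\mathbb Q$. A sufficiently fast $\varepsilon_N$ then concentrates $\mathbb P_N^{\Sigma_N,\mathrm{loc}}$ on $\{D(x)=N\one\}$. This forces $\hat D_N=T(\xi_N)\to 0$ rather than $\hat D_N\Rightarrow\mathcal N(0,\Sigma)$ when $\Sigma\neq 0$. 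So the existential choice of $\varepsilon_N$ in the statement is genuinely needed.
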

\begin{proof}

Let $P=P_{\operatorname{Ran}\Sigma}$ be the orthogonal projection onto the range of $\Sigma$, while $P^\perp=I-P=P_{\ker\Sigma}$ is the orthogonal projection onto its kernel. 
Fix $\varepsilon > 0$ and let $\Sigma_\varepsilon = \Sigma+\varepsilon P^{\perp}$. ($\Sigma_\varepsilon$ is positive definite from the very definitions of $P$ and $P^\perp$.)  Let the law of $\xi_N(X^{(N)})$ under $\mathbb{P}_N^{\Sigma_\varepsilon, \mathrm{loc}}$ be denoted by $\nu_N^{(\varepsilon)}$ and the law of $x(H_\varepsilon)$ for $H_\varepsilon \in \mathcal M_{\Sigma_\varepsilon}$ be denoted by $\nu^{(\varepsilon)}$. Since $\Sigma_\varepsilon$ is positive definite, Theorem~\ref{thm:lemma21gen} implies that $\nu_N^{(\varepsilon)} \Rightarrow \nu^{(\varepsilon)}$ as $N \to \infty$.

Since $\mathbf d(H_\varepsilon)$ has law $N(0, \Sigma_\varepsilon)$ and $\Sigma_\varepsilon \to \Sigma$, it follows that $\mathbf d(H_\varepsilon) \Rightarrow \mathbf d(H)$. Moreover, the off-diagonal law is independent of $\varepsilon$, so $H_\varepsilon \Rightarrow H$ in $\mathcal H_M$. The ordered eigenvalues of Hermitian matrices are Lipschitz continuous \cite[Corollary A.6]{Anderson-Guionnet-Zeitouni}, and the same holds when restricting to each principal minor. Thus, $H \mapsto x(H)$ is continuous, and by the continuous mapping theorem, $\nu^{(\varepsilon)} \Rightarrow \nu$, where $\nu$ is the law of $x(H)$.

Let $\rho$ be the Prokhorov metric and choose a monotonically non-increasing sequence of positive reals $c_k \to 0$ such that $\rho(\nu^{(c_k)}, \nu) \le 1/(2k)$. For each $k$, choose $N_k \geq N_0$ such that $\rho(\nu_N^{(c_k)}, \nu^{(c_k)}) \leq 1/(2k)$ for $N \geq N_k$ and if $k \geq 2$, additionally require $N_k > N_{k-1}$. For $N< N_1$, let $\varepsilon_N = c_1$, and for $N_k \leq N < N_{k+1}$, let $\varepsilon_N = c_k$. By the triangle inequality, for $N_k \leq N < N_{k+1}$
\begin{equation*}
\rho(\nu_N^{(\varepsilon_N)}, \nu) \leq \rho(\nu_N^{(c_k)}, \nu^{(c_k)}) + \rho(\nu^{(c_k)}, \nu) \le \frac{1}{k},
\end{equation*}
and by construction, $\varepsilon_N\to 0$. Therefore, $\nu_N^{(\varepsilon_N)} \Rightarrow \nu$.
\end{proof}

\section{Markov Random Words and Representation Rigidity}

We now apply the preceding random-matrix framework to the limiting Brownian functionals arising from Markov random words. We first identify the covariance structure for which the projected Brownian motion is isotropic on the trace-zero hyperplane $\mathbf 1^\perp$, namely, 
$$Q\Sigma Q=cQ,
\qquad
Q=I-\frac1M\one \one^T.$$

Under this condition, the corresponding limiting functional admits a Gaussian random-matrix representation. We then show that, within the natural class of Hermitian Gaussian matrices with independent Gaussian off-diagonal entries, this condition is in fact rigid: a conditional maximal-eigenvalue representation valid for every endpoint $d\in\mathbb R^M$ forces both $Q\Sigma Q=cQ$ and equality of all off-diagonal variances. Finally, for Markov chains with uniform stationary distribution, we characterize explicitly the transition matrices whose asymptotic covariance has the form $\Sigma=c\left(I-\frac1M\one \one^T\right)$. 

Let $\Sigma$ be an $M\times M$ positive semidefinite matrix, with $M\ge2$, let $B^\Sigma$ be Brownian motion in ${\mathbb R}^M$ with covariance matrix $\Sigma$, and define the last-passage functional 
\begin{equation}\label{LastPassFunct}
F_\Sigma
:=\max_{0=t_0\le t_1\le\cdots\le t_M=1}
\sum_{i=1}^M\left(B_i^\Sigma(t_i)-B_i^\Sigma(t_{i-1})\right).
\end{equation}
Recall from \cite{HL2} that, for a finite-state Markov chain with uniform stationary distribution and asymptotic covariance matrix $\Sigma$, $F_\Sigma$ is the limiting random variable for the centered and normalized length of the longest weakly increasing subsequence.

Throughout,  let $Q = I - \frac{1}{M} \one \one^T$ be the orthogonal projection onto $\one^\perp$.  The following proposition decomposes $\Sigma$ along $\one^\perp\oplus\operatorname{span}{\one}$  under the condition $Q\Sigma Q=cQ$ and gives the corresponding criterion for positive semidefiniteness.

\begin{prop}\label{prop:blockform}
Let $c \ge 0$ and let $\Sigma$ be a symmetric $M\times M$ matrix such that $Q \Sigma Q = cQ$. Then,
\begin{equation*}
\Sigma = cQ + \frac{1}{M} \left ( Q \Sigma \one \one^T + \one \one ^T \Sigma Q \right ) + \frac{1}{M^2} \left ( \one^T \Sigma \one \right ) \one \one ^T.
\end{equation*}
If $c>0$, then $\Sigma$ is positive semidefinite if and only if  $c \one^T \Sigma \one \geq \norm{Q \Sigma \one}^2$.  If $c=0$, then $\Sigma$ is positive semidefinite if and only if
$\one^T \Sigma \one \ge 0$ and $\|Q \Sigma \one\|^2 =0$.  
\end{prop}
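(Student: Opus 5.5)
The plan is to decompose $\Sigma$ with respect to the orthogonal splitting $\RR^M=\one^\perp\oplus\operatorname{span}\one$. Write $I=Q+R$ with $R:=\frac1M\one\one^T$, the orthogonal projection onto $\operatorname{span}\one$. Then
\begin{equation*}
\Sigma=(Q+R)\Sigma(Q+R)=Q\Sigma Q+Q\Sigma R+R\Sigma Q+R\Sigma R .
\end{equation*}
Substituting the hypothesis $Q\Sigma Q=cQ$, together with $Q\Sigma R=\frac1M Q\Sigma\one\one^T$, $R\Sigma Q=\frac1M\one\one^T\Sigma Q$ and $R\Sigma R=\frac1{M^2}(\one^T\Sigma\one)\one\one^T$, gives the displayed formula directly.

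For positive semidefiniteness, I will evaluate the quadratic form on a general vector $x=y+t\one$ with $y\in\one^\perp$ and $t\in\RR$. Set $b:=Q\Sigma\one\in\one^\perp$ and $s:=\one^T\Sigma\one$. Using symmetry of $\Sigma$ and $Qy=y$, the formula yields
\begin{equation*}
x^T\Sigma x = c\|y\|^2+2t\,\langle b,y\rangle+t^2 s .
\end{equation*}
This is a $2\times2$ problem in disguise. For fixed $y$ only the component of $y$ along $b$ interacts with $t$. So $\Sigma\succeq0$ if and only if the form $c u^2+2t\|b\|u+st^2$ is nonnegative for all $(u,t)\in\RR^2$, together with $c\|y'\|^2\ge0$ for $y'\perp b$ in $\one^\perp$, which is automatic since $c\ge0$. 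When $b=0$ the form splits and positivity reduces to $c\ge0$, $s\ge0$.

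Case $c>0$: the $2\times2$ matrix $\begin{pmatrix}c&\|b\|\\ \|b\|&s\end{pmatrix}$ is positive semidefinite if and only if $c\ge0$, $s\ge0$ and $cs\ge\|b\|^2$. Since $c>0$, the determinant condition $cs\ge\|b\|^2$ already forces $s\ge0$. This is exactly the stated criterion. For the direction "only if", I will simply plug in $y=-(t/c)b$ and minimize in $t$ (completing the square).

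Case $c=0$: the form becomes $2t\langle b,y\rangle+st^2$. Taking $t=1$ and $y=-\lambda b$ with $\lambda\to\infty$ shows that $b=0$ is necessary. Taking $y=0$ shows that $s\ge0$ is necessary. Conversely, if $b=0$ and $s\ge0$, the form is $st^2\ge0$.

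The only mild subtlety is the decomposition step in the $c>0$ case, namely justifying the reduction to the span of $b$ when $b\neq0$. Completing the square handles it cleanly:
\begin{equation*}
c\|y\|^2+2t\langle b,y\rangle+st^2=c\Bigl\|y+\tfrac{t}{c}b\Bigr\|^2+t^2\Bigl(s-\tfrac{\|b\|^2}{c}\Bigr),
\end{equation*}
and this identity gives both directions at once.
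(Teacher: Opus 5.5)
Your proof is correct and follows essentially the paper's route. You use the same expansion $\Sigma=(Q+\frac1M\one\one^T)\Sigma(Q+\frac1M\one\one^T)$, and for $c>0$ your completed-square identity is exactly the Schur-complement criterion the paper applies to the block form of $\Sigma$ on $\one^\perp\oplus\operatorname{span}\one$; the only cosmetic difference is that for $c=0$ you obtain the necessity of $Q\Sigma\one=0$ by the scaling $y=-\lambda b$, whereas the paper argues that $\Sigma\succeq 0$ and $Q\Sigma Q=0$ force $\Sigma^{1/2}Q=0$.
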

\begin{proof}
Expanding $\Sigma = I \Sigma I$ with $I = Q+ \frac{1}{M} \one \one^T$ gives
\begin{align*}
        \Sigma &= \left ( Q + \frac{1}{M} \one \one ^T \right ) \Sigma \left ( Q + \frac{1}{M} \one \one^T \right ) \\
        &= Q \Sigma Q + \frac{1}{M} \left ( Q \Sigma \one \one^T + \one \one^T \Sigma Q \right )+\frac{1}{M^2} (\one^T \Sigma \one) \one \one^T\\
        &= cQ + \frac{1}{M} \left ( Q \Sigma \one \one^T + \one \one ^T \Sigma Q \right ) + \frac{1}{M^2} \left ( \one^T \Sigma \one \right ) \one \one ^T.
\end{align*}

Next, let $c>0$.   With respect to the orthogonal decomposition $\RR^M = \one^\perp \oplus \textrm{span} \{ {\one}/{\sqrt M}\}$, we have 
$$\Sigma=
\begin{pmatrix}
cI_{\one^\perp}
&
M^{-1/2}Q\Sigma\one\\[2mm]
M^{-1/2}(Q\Sigma\mathbf1)^T
&
M^{-1}\one^T\Sigma\one
\end{pmatrix}.$$
Then, the Schur complement of the $\one^\perp$ block is 
$\frac1M\left(\one^T\Sigma\one -\frac1c\|Q\Sigma\one \|^2\right)$.
By the Schur complement criterion, $\Sigma$ is positive semidefinite if and only if $c \one ^T \Sigma \one \geq \norm{Q \Sigma \one}^2.$

If $c=0$, the stated conditions imply $Q\Sigma\one=0$. Substituting into the decomposition yields $\Sigma=(\mathbf1^T\Sigma\one)
\one\one^T/M^2$, 
which is positive semidefinite precisely when $\one^T\Sigma\one \ge 0$.  
Conversely, if $\Sigma$ is positive semidefinite and $Q\Sigma Q=0$, then $\Sigma^{1/2}Q=0$, hence $\Sigma Q=0$, so $Q\Sigma\one =0$, while $\one ^T\Sigma\one \ge 0$.
\end{proof}

The next theorem identifies the joint law of the Brownian endpoint and its last-passage functional under projected isotropy, while the corollary gives the corresponding identity for a prescribed endpoint.

\begin{thm}\label{thm:qsigmaq}
    Let $B$ be a standard Brownian motion in $\mathbb R^M$ and let $c \geq 0$. Then, $QB^\Sigma$ and $\sqrt{c} QB$ have the same law if and only if
    \begin{equation*}
        Q \Sigma Q = cQ.
    \end{equation*}
    Moreover, if $\Sigma = cQ$, then for $H_0 \in {\cal M}_I$, 
 $(B ^\Sigma(1), F_\Sigma)$ has the same law as
 \begin{equation*}
     \left (\sqrt c Q \mathbf d(H_0), \lambda_{\textrm{max}} (\sqrt c (H_0 - \frac{\Tr H_0}{M} I ))\right ).
 \end{equation*}
 Additionally, if $Q \Sigma Q = cQ$, there exists a centered Gaussian random variable $Z$ that is jointly Gaussian with $Q \mathbf d(H_0)$ and independent of the off-diagonal entries of $H_0$ and of $\Tr H_0$ such that
 \begin{equation*}
     (B^\Sigma(1), F_\Sigma) \stackrel{d} = \left ( \sqrt{c} Q \mathbf d(H_0) + Z \one, \lambda_{\textrm{max}} \left ( \sqrt{c} \left ( H_0- \frac{ \Tr H_0}{M}I \right ) + ZI \right ) \right ).
 \end{equation*}
\end{thm}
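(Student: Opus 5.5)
The plan is to prove the three assertions in order. Throughout, the Brownian last-passage functional \eqref{LastPassFunct} is viewed as a deterministic functional $\mathcal F$ of a continuous path $f:[0,1]\to\RR^M$, so that $F_\Sigma=\mathcal F(B^\Sigma)$.

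\textbf{Equivalence of laws.} Both $QB^\Sigma$ and $\sqrt c\,QB$ are centered continuous Gaussian processes. Their covariances are $(t\wedge s)\,Q\Sigma Q$ and $(t\wedge s)\,cQ$, respectively. Centered Gaussian processes are determined in law by their covariance functions, so the two laws coincide if and only if $Q\Sigma Q=cQ$.

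\textbf{Translation identity.} The key deterministic fact is the following. For any path $f$ and any scalar path $g$,
\[
\mathcal F(f+g\one)=\mathcal F(f)+g(1)-g(0),
\]
since the increments of $g$ telescope along any partition $0=t_0\le\dots\le t_M=1$. Applied with $g=-\frac1M\one^Tf$, this gives
\[
\mathcal F(Qf)=\mathcal F(f)-\tfrac1M\one^Tf(1).
\]

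\textbf{The case $\Sigma=cQ$.} Here $\one^TB^\Sigma$ has variance $\one^T\Sigma\one=0$, so $B^\Sigma=QB^\Sigma$ almost surely. By the first part, $B^\Sigma\stackrel d=\sqrt c\,QB$ as processes. By homogeneity of $\mathcal F$ and the translation identity,
\[
(B^\Sigma(1),F_\Sigma)\stackrel d=\sqrt c\,\bigl(QB(1),\,\mathcal F(B)-\tfrac1M\one^TB(1)\bigr).
\]
It then suffices to know the joint identity
\[
(B(1),\mathcal F(B))\stackrel d=(\mathbf d(H_0),\lambda_{\max}(H_0)).
\]
Given this, $\one^T\mathbf d(H_0)/M=\Tr H_0/M$, and the conclusion follows from
\[
\lambda_{\max}(H_0)-\Tr H_0/M=\lambda_{\max}\bigl(H_0-\tfrac{\Tr H_0}{M}I\bigr).
\]
For the joint identity I would use the Gelfand--Tsetlin form of the GUE/Brownian correspondence (\cite{Baryshnikov,BGH}). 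The full GT pattern of $H_0$ has the law of the Brownian GT pattern, namely the scaling limit of the RSK pattern of the geometric array. That limit is the $\Sigma=I$ case of Theorem~\ref{thm:lemma21gen} combined with Donsker's theorem. The top entry $x_1^M$ is $\mathcal F(B)$, and by Lemma~\ref{lem:summationidentity} and Proposition~\ref{prop:column-sum} the map $T$ applied to the pattern recovers $\mathbf d(H_0)$ on one side and $B(1)$ on the other. This step, extracting the \emph{joint} law of endpoint and functional rather than just the marginal of $F$, is where I expect the main care to be needed.

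\textbf{General case $Q\Sigma Q=cQ$.} Write $W(t)=\frac1M\one^TB^\Sigma(t)$, so that $B^\Sigma=QB^\Sigma+W\one$. By the translation identity,
\[
F_\Sigma=\mathcal F(QB^\Sigma)+W(1).
\]
Suppose first $c>0$. The cross covariance is $\cov(QB^\Sigma(t),W(s))=(t\wedge s)\,Q\Sigma\one/M$. Setting $a=Q\Sigma\one/(cM)\in\one^\perp$, Gaussian regression gives
\[
W=a^TQB^\Sigma+Y,
\]
with $Y$ a one-dimensional Brownian motion independent of $QB^\Sigma$. Its variance is
\[
\tfrac1{M^2}\one^T\Sigma\one-\tfrac{1}{cM^2}\|Q\Sigma\one\|^2\ge0,
\]
which is nonnegative by Proposition~\ref{prop:blockform}. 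The pair $(B^\Sigma(1),F_\Sigma)$ is then a function of $\bigl(QB^\Sigma(1),\mathcal F(QB^\Sigma),Y(1)\bigr)$, with $Y(1)$ independent of the other two. Now $QB^\Sigma\stackrel d=\sqrt c\,QB$, and by the previous step
\[
\bigl(\sqrt c\,QB(1),\sqrt c\,\mathcal F(QB)\bigr)\stackrel d=\bigl(\sqrt c\,Q\mathbf d(H_0),\lambda_{\max}\bigl(\sqrt c(H_0-\tfrac{\Tr H_0}{M}I)\bigr)\bigr).
\]
Hence we may take $Z=\sqrt c\,a^TQ\mathbf d(H_0)+Y(1)$, with $Y(1)$ chosen independent of $H_0$. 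Then $W(1)\one$ is matched by $Z\one$, and $\lambda_{\max}(\cdot)+Z=\lambda_{\max}(\cdot+ZI)$. The variable $Z$ is jointly Gaussian with $Q\mathbf d(H_0)$ and independent of the off-diagonal entries. It is also independent of $\Tr H_0$, because the diagonal of $H_0$ is standard Gaussian, so $Q\mathbf d(H_0)$ and $\one^T\mathbf d(H_0)$ are independent.

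\textbf{Degenerate case $c=0$.} Proposition~\ref{prop:blockform} gives $\Sigma=\frac{\one^T\Sigma\one}{M^2}\one\one^T$. Then $QB^\Sigma=0$ and $F_\Sigma=W(1)$, and one takes $Z\sim N(0,\one^T\Sigma\one/M^2)$ independent of $H_0$. The matrix term vanishes since $c=0$, so the identity reduces to $(Z\one,Z)$, which has the law of $(B^\Sigma(1),F_\Sigma)$.
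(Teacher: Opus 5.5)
Your proposal is correct and follows the paper's proof essentially step for step: the covariance comparison, the telescoping of the common scalar component, the joint identity $(B(1),F_I)\stackrel d=(\mathbf d(H_0),\lambda_{\max}(H_0))$, and the same variable $Z=\frac1{M\sqrt c}(Q\Sigma\one)^TQ\mathbf d(H_0)+\xi(1)$; the paper simply cites that joint identity as \cite[equation (3.2)]{BGH} rather than re-deriving it from Theorem~\ref{thm:lemma21gen} and Donsker's theorem. The only other difference is cosmetic: you obtain $Z$ by regressing $\frac1M\one^TB^\Sigma$ on $QB^\Sigma$, whereas the paper builds the coupled process $\widetilde B=X+A\one$ from $H_0$ and an independent scalar Brownian motion and checks that its covariance is $(s\wedge t)\Sigma$, which is the same decomposition read in the opposite direction.
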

\begin{proof}
    Both processes are centered Gaussian with covariance functions $(s \wedge t) Q \Sigma Q$ and $c(s \wedge t)Q$ respectively, so they have the same law if and only if $Q \Sigma Q = cQ$. If $\Sigma =cQ$, then $B^\Sigma = Q B^\Sigma$ almost surely and thus $B^\Sigma$ has the same law as $\sqrt c QB$.

Next, taking $B^I=B$ and $k_1 =M$ in \cite[equation (3.2)]{BGH} shows that $(B(1), F_I)$ has the same law as $(\mathbf d(H_0), \lambda_{\max}(H_0)).$  Since $QB = B-(\one^TB/M) \one$, the increments of $\one^TB/M$ telescope, and
\begin{equation*}
        \max_{0=t_0 \leq t_1 \leq \cdots \leq t_M =1} \sum_{i=1}^M \sqrt{c} \left ((QB)_i(t_i)-(QB)_i(t_{i-1}) \right) = \sqrt{c} \left (F_I - \frac{\one^T B(1)}{M} \right ).
\end{equation*}

Applying this centering to the preceding joint identity and using $\one^T \mathbf d(H_0) = \Tr H_0$ shows
    \begin{align*}
        \left (B^\Sigma(1), F_\Sigma \right )&  \stackrel{d}= \left ( \sqrt{c} Q \mathbf d(H_0), \sqrt{c} \left ( \lambda_{\textrm{max}}(H_0) - \frac{\Tr H_0}{M} \right ) \right )\\
        & = \left ( \sqrt{c} Q \mathbf d(H_0),\lambda_{\textrm{max}} \left ( \sqrt{c} \left ( H_0- \frac{\Tr H_0}{M}I \right ) \right ) \right ).
    \end{align*}

Finally, assume that $Q\Sigma Q=cQ$, with $c>0$,  and set
$X:=\sqrt c\,Q \mathbf d(H_0)$.
Then $X$ is a centered Gaussian vector supported on $\one^\perp$, with covariance $cQ$. By Proposition~\ref{prop:blockform},
$\Sigma = cQ+\frac1M\left(Q\Sigma\mathbf1\mathbf1^T+
\one\one^T\Sigma Q
\right)
+\frac1{M^2}
(\mathbf1^T\Sigma\mathbf1)\mathbf1\mathbf1^T$,
and, since $\Sigma$ is positive semidefinite,
$c\,\one^T\Sigma\one \ge
\|Q\Sigma\mathbf1\|^2$.
Let  
$\xi=(\xi(t))_{0\leq t\leq1}$ be a centered scalar Brownian motion,
independent of $H_0$, with variance parameter $\sigma_\xi^2
:=\frac1{M^2}\left(
\mathbf1^T\Sigma\mathbf1
-\frac1c\|Q\Sigma\one\|^2\right)$, 
and define
$$Z:=\frac1{Mc}(Q\Sigma\mathbf1)^T X+\xi(1)
=\frac1{M\sqrt c}(Q\Sigma\mathbf1)^TQ \mathbf d(H_0)+\xi(1).$$
Then, 
$\operatorname{Cov}(X,Z)=
\frac1M Q\Sigma\one$
and
$\operatorname{Var}(Z)
=\frac1{M^2}\one^T\Sigma\one$.
Therefore, Proposition~\ref{prop:blockform} gives
$\operatorname{Cov}(X+Z\mathbf1)=\Sigma$,
and thus 
$X+Z\mathbf1\stackrel d=B^\Sigma(1)$.
Moreover, $Z$ is jointly Gaussian with $Q \mathbf d(H_0)$, while it is independent of the off-diagonal entries of $H_0$ and of $\operatorname{Tr}H_0$.

To realize this coupling at the process level, let
$\beta=(\beta(t))_{0\leq t\le1}$ be a standard
$\mathbb R^M$-valued Brownian bridge, independent of $H_0$ and of
$\xi$, and define
$
B(t):=t\,\mathbf d(H_0)+\beta(t)$, $0\le t\le 1$.
Since $\mathbf d(H_0)\sim N(0,I)$  and is independent of $\beta$,
$B$ is a standard Brownian motion in $\mathbb R^M$, with
$B(1)=\mathbf d(H_0)$ 
almost surely. Set
$
X(t):=\sqrt c\,QB(t)$, $0\le t\le 1$. 
Then, $X(1)=\sqrt c\,Q\mathbf d(H_0)=X$. 
Define $A(t):=\frac1{Mc}(Q\Sigma\mathbf1)^TX(t)+\xi(t)$, $0\leq t\le 1$.
For $0\le s,t\le 1$, $\operatorname{Cov}(X(s),X(t))=(s\wedge t)cQ$, 
$\operatorname{Cov}(X(s),A(t))
=(s\wedge t)\frac1M Q\Sigma\one$, 
and
$\operatorname{Cov}(A(s),A(t))
=(s\wedge t)\frac1{M^2}\mathbf1^T\Sigma\one$. 
Hence the centered Gaussian process
$\widetilde B(t):=X(t)+A(t)\one$
satisfies $\operatorname{Cov}(\widetilde B(s),\widetilde B(t))
=(s\wedge t)\Sigma$, 
and therefore 
$\widetilde B\stackrel d=B^\Sigma$. 
Moreover, since $X(1)=X$,
$A(1)
=\frac1{Mc}(Q\Sigma\mathbf1)^TX+\xi(1)
=Z$.  

For every admissible 
$0=t_0\leq t_1\leq\cdots\leq t_M=1$, 
the common scalar process telescopes: 
$\sum_{i=1}^M
\bigl(A(t_i)-A(t_{i-1})\bigr)
=A(1)-A(0)
=A(1)$. Therefore, $F_{\widetilde B}=F_X+A(1)=F_X+Z$, 
pathwise.  Finally, 

$$(X(1),F_X,\xi(1))
\stackrel d=
\left(
\sqrt c\,Q\mathbf d(H_0),
\lambda_{\max}\!\left(
\sqrt c\left(H_0-\frac{\operatorname{Tr}H_0}{M}I\right)
\right),
\xi(1)
\right),$$
by the preceding joint identity and the independence of $\xi(1)$ from $(X(1),F_X)$ and from $H_0$.  Applying
$$(x,f,u)\longmapsto
\left(
x+z(x,u)\one,\,f+z(x,u)
\right),$$ where $z(x,u):=\frac1{Mc}(Q\Sigma\one)^Tx+u$,
and recalling that $A(1)=Z$, yields
$$ (B^\Sigma(1),F_\Sigma)
\stackrel d=
\left(
\sqrt c\,Q \mathbf d(H_0)+Z\one,\;
\lambda_{\max}\!\left(
\sqrt c\left(H_0-\frac{\operatorname{Tr}H_0}{M}I\right)
\right)+Z
\right).$$
This is precisely, 
$$(B^\Sigma(1),F_\Sigma)
\stackrel d=
\left(
\sqrt c\,Q\mathbf d(H_0)+Z\one,\;
\lambda_{\max}\!\left(
\sqrt c\left(H_0-\frac{\operatorname{Tr}H_0}{M}I\right)+ZI
\right)
\right),$$
giving the desired joint identity.

If $c=0$, Proposition~\ref{prop:blockform} gives $\Sigma=a\one\one^T$ for some $a\ge 0$. Let $W$ be a standard scalar Brownian motion independent of $H_0$, and set $Z=\sqrt a\,W(1)$. We may realize $B^\Sigma(t)=\sqrt a\,W(t)\one$, so $F_\Sigma=Z$ and the asserted joint identity follows.
\end{proof}

\begin{cor}\label{cor:conditional}
Let $H=(h_{ij}) \in {\cal H}_M$, where $\mathbf d(H)=0$ and for $1 \leq i < j \leq M$, $h_{ij}$ are independent with $h_{ij} \sim N(0, v_{ij})_{\CC}$. Define $\beta^\Sigma (t) = QB^\Sigma(t) - tQB^\Sigma(1)$. If $Q\Sigma Q = cQ$ for some $c>0$ and $v_{ij}=c$ for every $1 \leq i <j \leq M$, then for every $d \in \RR^M$,
\begin{equation*}
\max_{0 = t_0 \leq t_1 \leq \cdots \leq t_M=1} \sum_{i=1}^M \left [d_i(t_i-t_{i-1}) + \beta_i^\Sigma(t_i) - \beta_i^\Sigma(t_{i-1}) \right ] \stackrel{d} = \lambda_{\textrm{max}} \left ( \diag(d)+H \right ).
\end{equation*}
\end{cor}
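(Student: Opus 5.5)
The plan is to reduce the statement, via Theorem~\ref{thm:qsigmaq}, to the case of a standard Brownian bridge with $c=1$. There I would read off the identity by conditioning the joint law $(B(1),F_I)\stackrel d=(\mathbf d(H_0),\lambda_{\max}(H_0))$ of \cite[equation (3.2)]{BGH} on the endpoint, and then rescale.

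\emph{Step 1 (reduction to a standard bridge).} By Theorem~\ref{thm:qsigmaq}, $QB^\Sigma\stackrel d=\sqrt c\,QB$ as processes, with $B$ a standard Brownian motion in $\RR^M$. The map $Y\mapsto (Y(t)-tY(1))_{t}$ is measurable on path space, so $\beta^\Sigma\stackrel d=\sqrt c\,Q\beta$, where $\beta(t)=B(t)-tB(1)$ is a standard $\RR^M$-valued Brownian bridge. Moreover, $Q\beta=\beta-\frac1M(\one^T\beta)\one$, and the scalar part telescopes along any admissible $0=t_0\le\cdots\le t_M=1$: its total contribution is $\frac1M(\one^T\beta(1)-\one^T\beta(0))=0$. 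Hence the left-hand side has the law of
\[
G(d):=\max_{0=t_0\le\cdots\le t_M=1}\sum_{i=1}^M\bigl[d_i(t_i-t_{i-1})+\sqrt c\,(\beta_i(t_i)-\beta_i(t_{i-1}))\bigr].
\]
Pulling out $\sqrt c$ gives $G(d)=\sqrt c\,G_1(d/\sqrt c)$, where $G_1$ denotes the same functional with $c=1$.

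\emph{Step 2 (conditioning the GUE identity).} Write $B(t)=tB(1)+\beta(t)$, with $B(1)\sim N(0,I)$ independent of $\beta$. Then, pathwise, $F_I=G_1(B(1))$. On the matrix side, write $H_0=\diag(\mathbf d(H_0))+K$, where $K$ has zero diagonal and iid $N(0,1)_\CC$ off-diagonal entries and is independent of $\mathbf d(H_0)\sim N(0,I)$. The joint identity $(B(1),G_1(B(1)))\stackrel d=(\mathbf d(H_0),\lambda_{\max}(\diag(\mathbf d(H_0))+K))$, combined with independence on both sides and the common law $N(0,I)$ of the first coordinate, yields equality of the regular conditional laws. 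Thus $G_1(x)\stackrel d=\lambda_{\max}(\diag(x)+K)$ for Lebesgue-a.e.\ $x\in\RR^M$. Concretely, I would test against $\varphi(x)e^{isy}$ and use Fubini, which shows that the characteristic functions of the two sides agree for a.e.\ $x$, simultaneously for all rational $s$.

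\emph{Step 3 (every $d$, and rescaling).} Both sides are pathwise $1$-Lipschitz in $x$ in the sup norm. For $G_1$ this holds because $\sum_i x_i(t_i-t_{i-1})$ changes by at most $\|x-x'\|_\infty$. For the eigenvalue this follows from Weyl's inequality. Hence both characteristic functions are continuous in $x$, and the a.e.\ equality extends to every $x$. Finally,
\[
G(d)=\sqrt c\,G_1(d/\sqrt c)\stackrel d=\sqrt c\,\lambda_{\max}\bigl(\diag(d/\sqrt c)+K\bigr)=\lambda_{\max}(\diag(d)+\sqrt c K),
\]
and $\sqrt cK$ has exactly the law of $H$ with $v_{ij}=c$, which proves the corollary.

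The main obstacle is Step 2. One must make the disintegration rigorous, using independence of the endpoint from both $\beta$ and $K$ so that the conditional laws are given by the explicit functions $G_1(x)$ and $\lambda_{\max}(\diag(x)+K)$. One must also upgrade the resulting a.e.\ statement to every $d$, which is handled by the Lipschitz continuity in Step 3.
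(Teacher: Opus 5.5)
Your proposal is correct and takes essentially the same route as the paper. Both arguments condition the joint identity $(B(1),F_I)\stackrel d=(\mathbf d(H_0),\lambda_{\max}(H_0))$ on the endpoint to obtain the identity for almost every $d$, and then extend it to every $d$ using the sup-norm Lipschitz bound for the path functional together with Weyl's inequality.

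The differences are cosmetic. You telescope away the scalar part of $Q\beta$, condition the full GUE identity on all of $\RR^M$, and rescale by $\sqrt c$. The paper instead conditions the traceless identity with $\Sigma=cQ$ on $\one^\perp$ and then shifts along $\one$. Your Fubini/characteristic-function disintegration also makes explicit a conditioning step that the paper only asserts.
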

\begin{proof}
The process $\beta^\Sigma$ is a Brownian bridge with  $\cov(\beta^\Sigma(s), \beta^\Sigma(t)) = c(s \wedge t -st)Q$, and the traceless matrix in Theorem \ref{thm:qsigmaq} is
 \begin{equation*}
     \sqrt{c} \left ( H_0 - \frac{\Tr H_0}{M}I \right ) = \diag \left ( \sqrt{c} Q \mathbf d(H_0) \right ) + \sqrt{c} \left ( H_0 - \diag (\mathbf d (H_0)) \right ),
 \end{equation*}
where the off-diagonal term is independent of $\sqrt{c} Q \mathbf d (H_0)$ and has the same law as $H$. Conditioning on the endpoint $QB^\Sigma(1)=d\in\one^\perp$, the Brownian motion with covariance matrix $cQ$ has the law of $td + \beta^\Sigma (t)$, while conditioning on $\sqrt{c}Q \mathbf d(H_0) = d$, the matrix $\sqrt{c} \left ( H_0 - \frac{\Tr H_0}{M} I \right )$ has the same law as $\diag (d) + H$. Therefore, the conditional identity holds for $N(0,cQ)$-almost every $d\in\one^\perp$.

For $d, d' \in \one^\perp$ and for every $0 =t_0 \leq \cdots \leq t_M = 1$,
\begin{align*}
\abs{\sum_{i=1}^M(d_i-d_i')(t_i-t_{i-1})} \leq \norm{d-d'}_\infty \sum_{i=1}^M (t_i-t_{i-1}) = \norm{d-d'}_\infty.
\end{align*}
Taking the maximum over admissible choices yields the same bound for the maximal functional, which is thus continuous in $d$. By Weyl's inequality,
\begin{equation*}
\abs{\lambda_{\textrm{max}} (\diag(d) +H) - \lambda_{\textrm{max}} (\diag (d') + H)} \leq \norm{d-d'}_\infty,
\end{equation*}
and the two previous bounds give weak continuity of both laws in $d$. Since the support of $N(0,cQ)$ is $\one^\perp$, continuity extends the $N(0,cQ)$ almost everywhere identity to every $d \in \one^\perp.$ Since $d = Qd + \frac{\one^Td }{M} \one$ adds $\one^Td/M$ to the maximal functional and eigenvalue, it follows that the identity holds for every $d \in \RR^M$.  
\end{proof}  

The rigidity argument rests on comparing the effect of sending one coordinate to $-r$, and as such we first quantify the effect of a large negative penalty on one coordinate of the Brownian last-passage functional.

\begin{lemma}\label{lem:boundary}
Let $X$ and $Y$ be scalar Brownian bridges from $0$ to $0$ on $[0,1]$, with $X$ nondegenerate.  Suppose that, for some $\sigma > 0$,
\begin{equation*}
\cov(Y(s), Y(t)) = \sigma^2 (s \wedge t- st)
\end{equation*}
for every $0 \leq s, t \leq 1$. Then, as $r \to \infty$,
\begin{equation*}
\EE \max_{0 \leq s \leq t \leq 1 } \{X(s) + Y(t) - r(1-t) \} = \EE \max_{ 0 \leq s \leq 1} X(s) + \frac{\sigma^2}{2r} + o(r^{-1}).
\end{equation*}
\end{lemma}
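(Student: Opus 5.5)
The plan is to sandwich the functional
\[
G_r:=\max_{0\le s\le t\le 1}\{X(s)+Y(t)-r(1-t)\}
\]
between $\max_s X(s)$ plus a one-dimensional boundary term that depends only on $Y$ near $t=1$, and then compute that boundary term explicitly. I assume, as in the intended application, that $(X,Y)$ is jointly Gaussian with continuous paths; no independence between $X$ and $Y$ will be needed. Set $M_X:=\max_{0\le s\le1}X(s)$, and write $u=1-t$. Define
\[
R_r:=\sup_{0\le u\le 1}\{Y(1-u)-ru\}\ \ge 0,\qquad R_r^\delta:=\sup_{0\le u\le \delta}\{Y(1-u)-ru\}.
\]
Taking $t=1$ gives $G_r\ge M_X$. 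Since $\max_{s\le t}X(s)\le M_X$ for every $t$, we also get $G_r\le M_X+R_r$.

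For the lower bound, let $s^*$ be the a.s. unique maximizer of $X$. Because $X$ is a nondegenerate bridge, $M_X>0$ a.s. and $X(1)=0$, so $s^*<1$ a.s. Hence $p_\delta:=\PP(s^*>1-\delta)\to0$ as $\delta\downarrow0$. On the event $\{s^*\le1-\delta\}$ we may take $s=s^*$ and $t=1-u$ with $u\le\delta$, which gives $G_r\ge M_X+R_r^\delta$. Therefore
\[
\EE R_r^\delta-\EE\bigl[R_r^\delta\one_{\{s^*>1-\delta\}}\bigr]\le \EE G_r-\EE M_X\le \EE R_r ,
\]
and Cauchy--Schwarz bounds the middle correction by $p_\delta^{1/2}(\EE (R_r^\delta)^2)^{1/2}$. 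It therefore suffices to prove three facts:
\begin{itemize}
\item[(a)] $\EE R_r=\sigma^2/(2r)+o(r^{-1})$;
\item[(b)] $\EE R_r-\EE R_r^\delta$ is exponentially small in $r$ for each fixed $\delta$;
\item[(c)] $\EE R_r^2=O(r^{-2})$.
\end{itemize}
Letting $r\to\infty$ and then $\delta\downarrow0$ then gives the lemma.

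For (a)--(c) I use the time-reversal invariance of the bridge. The process $u\mapsto Y(1-u)$ is again a bridge with the same covariance, so we may write $Y(1-u)=\sigma(W(u)-uW(1))$ for a standard Brownian motion $W$. Compare $R_r$ with
\[
S_r:=\sup_{u\ge0}\{\sigma W(u)-ru\},
\]
which is exponential with mean $\sigma^2/(2r)$. Let $u^*$ be the maximizer of $S_r$. By Brownian scaling, $u^*\stackrel d= r^{-2}\sigma^2 u_1^*$, where $u_1^*$ is the maximizer for unit drift and unit variance. In particular $\EE (u^*)^2=O(r^{-4})$, and $\PP(u^*>\delta)$ decays exponentially in $r$. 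On $\{u^*\le1\}$,
\[
R_r\ge S_r-\sigma|W(1)|u^* .
\]
Conversely, if $\hat u$ maximizes $R_r$, then $R_r\le \sup_{u\le1}\{\sigma W(u)-(r-\sigma|W(1)|)u\}$. On $\{|W(1)|\le r/(2\sigma)\}$, the maximizer of that drifted problem is again of size $O(r^{-2})$ in $L^2$, so $R_r\le S_r+\sigma|W(1)|\hat u$ with $\EE[|W(1)|\hat u]=O(r^{-2})$. The complementary event has Gaussian-small probability, and $R_r\le \sigma\sup|W|$ there.

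Combining these bounds via Cauchy--Schwarz, $\EE|W(1)|u^*\le(\EE W(1)^2)^{1/2}(\EE(u^*)^2)^{1/2}=O(r^{-2})$, gives (a). The same estimates, with $\delta$ in place of $1$, give (b). Finally, (c) follows from $R_r\le S_r+\sigma|W(1)|\hat u$ together with $\EE S_r^2=\sigma^4/(2r^2)$.

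The main obstacle is this last step: controlling the maximizer $\hat u$ of the drift-perturbed problem, because the perturbed drift $r-\sigma|W(1)|$ depends on the whole path. I plan to handle it by truncating on $\{|W(1)|\le r/(2\sigma)\}$, where the drift is at least $r/2$. On that event, $\hat u\le \tau$, where $\tau:=\sup\{u:\sigma W(u)\ge ru/2\}$ is a path-level bound not depending on $W(1)$. By scaling, $\tau$ has moments of order $r^{-2}$, and this gives the required $L^2$ bound on $\hat u$.
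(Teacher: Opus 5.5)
Your proof is correct, but it computes the boundary term by a different route than the paper.

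\textbf{How the paper argues.} It works directly with $S_r=\max_{0\le t\le1}\{Y(t)-r(1-t)\}$, which is your $R_r$. Since $Y(1-t)-rt$ is a Brownian bridge from $0$ to $-r$, the reflection principle gives the exact tail $\PP(S_r>x)=\exp(-2x(x+r)/\sigma^2)$. Integrating this yields $0\le\sigma^2/(2r)-\EE S_r\le\sigma^4/(2r^3)$ and $\EE(rS_r)^2\le\sigma^4/2$. So your items (a) and (c) come for free, and no analogue of (b) is needed.

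For the lower bound, the paper does not use a fixed window $[1-\delta,1]$ and a double limit. It compares the first maximizer $\tau$ of $X$ with the first maximizer $\tau_r$ of the $Y$-problem. On $\{\tau\le\tau_r\}$ one has exactly $F_r=\max X+S_r$. Moreover $\PP(\tau_r<\tau)\to0$, because $0\le 1-\tau_r\le r^{-1}\max Y$ and $\tau<1$ a.s. A single Cauchy--Schwarz step then finishes.

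\textbf{What your route buys.} You replace the bridge-specific formula by three ingredients: the exponential law of $\sup_{u\ge0}\{\sigma W(u)-ru\}$, Brownian scaling, and localization of the argmax ($\hat u\le\tau$ on $\{|W(1)|\le r/(2\sigma)\}$). This is longer, but more robust. It only uses that $Y$ near $t=1$ is a Brownian motion up to a bounded-slope drift perturbation, so it would survive where no exact bridge formula is available. Like the paper's argument, it uses only the marginal laws of $X$ and $Y$, so your joint-Gaussian assumption is harmless but unused.

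\textbf{Minor points.}
\begin{itemize}
\item On the bad event the crude bound should be $R_r\le 2\sigma\sup_{[0,1]}|W|$, not $\sigma\sup|W|$, since $|W(u)-uW(1)|\le 2\sup|W|$. This is irrelevant, because $R_r\le S_r+\sigma|W(1)|\hat u$ holds pathwise and $\hat u\le 1$.
\item For (c) you need a bound on $\EE[W(1)^2\hat u^2]$, not merely an $L^2$ bound on $\hat u$. It follows from H\"older's inequality and the fact that, by scaling, $\EE\tau^k=O(r^{-2k})$ for every $k$.
\item For (b) you only need $\EE R_r-\EE R_r^\delta=o(r^{-1})$, which your estimates give directly; exponential smallness is more than required.
\end{itemize}
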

\begin{proof}
Let $F_r = \max_{0 \leq s \leq t \leq 1 } \{X(s) + Y(t) - r(1-t) \}$ and $S_r = \max_{0 \leq t \leq 1 } \{ Y(t) - r(1-t) \}$. Since $t=1$ is admissible and $Y(1)=0$, it follows that
\begin{equation*}
0 \leq F_r - \max_{0 \leq s \leq 1} X(s) \leq S_r.
\end{equation*}
The process $Y(1-t)-rt$ is a Brownian bridge from $0$  to $-r$ with variance parameter $\sigma^2$, so for $x \geq 0$,
\begin{equation*}
\PP(S_r > x) = \exp \left ( - \frac{2x(x+r)}{\sigma^2} \right )
\end{equation*}
by the Brownian bridge reflection principle. Integrating the tail and applying the inequality $1 - \exp(-u) \leq u$ gives $0 \leq \sigma^2/(2r) - \EE S_r \leq \sigma^4/(2r^3)$ and $\EE(rS_r)^2 \leq \sigma^4/2.$

Denote the first time $X$ attains its maximum and the first time the maximum defining $S_r$ is attained by $\tau$ and $\tau_r$ respectively. On $\{\tau \leq \tau_r\}$, necessarily $F_r = \max_{0 \leq s \leq 1}X(s) + S_r$ as the universal upper bound $F_r\le \max X+S_r$ holds,  
while $(\tau,\tau_r)$ is admissible on that event and gives the reverse inequality.  Hence, 
\begin{equation*}
0 \leq S_r - \left ( F_r - \max_{0 \leq s \leq 1}X(s) \right ) \leq S_r \one_{\{\tau_r < \tau\}}.
\end{equation*}

Next, $Y(\tau_r) - r(1-\tau_r)=S_r \geq 0$, hence $0 \leq 1 - \tau_r \leq r^{-1} \max_{0 \leq t \leq 1}Y(t)$ and thus $\tau_r \to 1$ almost surely. Moreover, since $X(1)=0$, the strict positivity of the maximum gives $\tau < 1$ almost surely.  
 
Thus, $\PP(\tau_r < \tau) \to 0$, and by the Cauchy--Schwarz inequality,
\begin{align*}
0 \le r \EE S_r - r \EE \left ( F_r - \max_{0 \leq s \leq 1}X(s) \right ) 
 \leq \left ( \EE(rS_r)^2 \PP(\tau_r < \tau) \right )^{1/2} \longrightarrow 0.
\end{align*}
\end{proof}

We next obtain the analogous first-order expansion for the maximal eigenvalue of a $3\times 3$ Gaussian Hermitian matrix when one diagonal entry tends to $-\infty$.

\begin{lemma}\label{lem:matrixboundary}
Let $h_{12}, h_{13}$ and $h_{23}$ be independent, centered complex Gaussian random variables with $h_{12}$ nondegenerate. Then, as $r \to \infty$,
\begin{equation*}
\EE \lambda_{\textrm{max}} 
\begin{pmatrix}
0 & h_{12} & h_{13} \\
\overline{h_{12}} & 0 & h_{23} \\
\overline{h_{13}} & \overline{h_{23}} & -r
\end{pmatrix}
= \EE \abs{h_{12}} + \frac{\EE \abs{h_{13}}^2 + \EE\abs{h_{23}}^2}{2r} + o(r^{-1}).
\end{equation*}
\end{lemma}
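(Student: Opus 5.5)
Write $K$ for the $2\times2$ block $\begin{pmatrix}0&h_{12}\\ \overline{h_{12}}&0\end{pmatrix}$, $b=(h_{13},h_{23})^T$, and $\lambda_r$ for the largest eigenvalue of the full matrix. The plan is a Schur-complement argument. The eigenvalues of $K$ are $\pm|h_{12}|$. For $\lambda>-r$, $\lambda$ is an eigenvalue of the full matrix if and only if it is an eigenvalue of the $2\times 2$ matrix $K+bb^*/(\lambda+r)$. Hence $\lambda_r$ is the largest fixed point of
\[
\lambda\longmapsto\lambda_{\max}\bigl(K+bb^*/(\lambda+r)\bigr)\quad\text{on } (-r,\infty).
\]
Cauchy interlacing gives $\lambda_r\ge|h_{12}|$ deterministically. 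A crude bound $\lambda_r\le|h_{12}|+\|b\|$ also holds, by Weyl's inequality after splitting off the diagonal block.

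\emph{Pathwise expansion.} Since $\lambda_r\ge|h_{12}|\ge0$, the fixed-point relation gives the upper bound
\[
\lambda_r\le\lambda_{\max}\bigl(K+bb^*/r\bigr)\le|h_{12}|+\|b\|^2/r .
\]
For the lower bound, Rayleigh's quotient with the top unit eigenvector $u$ of $K$ gives $\lambda_r\ge|h_{12}|+|u^*b|^2/(\lambda_r+r)$. Both bounds are $|h_{12}|+O(\|b\|^2/r)$. For the sharp first-order term I would use first-order perturbation of the simple eigenvalue $|h_{12}|$ of $K$, which is simple a.s. since $h_{12}$ is nondegenerate. This gives pathwise
\[
r(\lambda_r-|h_{12}|)\longrightarrow |u^*b|^2 \quad\text{a.s.},
\]
because the gap $2|h_{12}|$ is positive and the perturbation $bb^*/(\lambda_r+r)$ is $O(1/r)$.

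\emph{Expectations.} Take expectations using dominated convergence. The domination is $r(\lambda_r-|h_{12}|)\le\|b\|^2$, which is integrable and uniform in $r$. This yields
\[
\EE\lambda_r=\EE|h_{12}|+\frac{\EE|u^*b|^2}{r}+o(r^{-1}).
\]
It remains to evaluate $\EE|u^*b|^2$. One has $u=(1,e^{-i\arg h_{12}})^T/\sqrt2$ up to phase, and $u$ is independent of $b$. So $\EE|u^*b|^2=\frac12(\EE|h_{13}|^2+\EE|h_{23}|^2)$, because the cross term $\Re\bigl(\overline{h_{13}}h_{23}e^{\cdot}\bigr)$ has mean zero by independence and centering. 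This gives exactly the claimed coefficient $(\EE|h_{13}|^2+\EE|h_{23}|^2)/(2r)$. If the $h_{ij}$ are not circular, the cross term still vanishes, since $h_{13}$ and $h_{23}$ are independent and centered.

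\emph{Main obstacle.} The delicate step is justifying the a.s.\ limit $r(\lambda_r-|h_{12}|)\to|u^*b|^2$ cleanly. I would do this with the explicit Rayleigh lower bound above, which already gives $\liminf\ge|u^*b|^2$ since $\lambda_r/r\to0$. For the matching $\limsup$, I would bound
\[
\lambda_{\max}(K+\epsilon bb^*)\le|h_{12}|+\epsilon|u^*b|^2+\epsilon^2\|b\|^4/(2|h_{12}|)
\]
via the $2\times2$ characteristic polynomial, with $\epsilon=1/r$.
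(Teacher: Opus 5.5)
Your proof is correct. Its outer structure matches the paper's: a pathwise sandwich $0\le r(\lambda_r-|h_{12}|)\le |h_{13}|^2+|h_{23}|^2$ serves as the dominating function, an almost-sure limit of $r(\lambda_r-|h_{12}|)$ is identified, dominated convergence is applied, and the cross term vanishes by independence and centering.

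Where you differ is in how the pathwise limit is obtained. The paper expands the $3\times 3$ characteristic polynomial into the exact identity $(\lambda_r+r)(\lambda_r-|h_{12}|)(\lambda_r+|h_{12}|)=\lambda_r(|h_{13}|^2+|h_{23}|^2)+2\Re(h_{12}\overline{h_{13}}h_{23})$. One AM--GM bound on the right side gives the domination. Dividing through then gives the limit $\frac12(|h_{13}|^2+|h_{23}|^2)+\Re(h_{12}\overline{h_{13}}h_{23})/|h_{12}|$ directly.

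You instead pass to the Schur complement $K+bb^*/(\lambda+r)$. You squeeze $\lambda_r$ between a Rayleigh-quotient lower bound and Weyl and second-order $2\times 2$ upper bounds, and you identify the limit as $|u^*b|^2$. This is the same quantity once $u=(1,e^{-i\arg h_{12}})^T/\sqrt2$ is substituted.

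The paper's route is shorter and fully explicit, but it is tied to the specific cubic. Yours needs a few more inequalities, but it is the standard first-order perturbation picture: the correction is the squared overlap of the coupling vector with the top eigenvector of the unpenalized block. It would extend essentially verbatim to larger blocks or several penalized coordinates, in the spirit of the Schur-complement step in the proof of Theorem~\ref{thm:mainclassification}.

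One small point should be made explicit. Your Rayleigh lower bound needs $\lambda_r$ to be the \emph{largest} eigenvalue of $K+bb^*/(\lambda_r+r)$, not merely an eigenvalue. This is easy to supply. By Weyl, the other eigenvalue is at most $-|h_{12}|+\|b\|^2/(\lambda_r+r)$, which is $<|h_{12}|\le\lambda_r$ once $r$ is large, pathwise since $|h_{12}|>0$ almost surely. Alternatively, use an intermediate-value argument on $\lambda\mapsto\lambda_{\max}\bigl(K+bb^*/(\lambda+r)\bigr)-\lambda$.
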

\begin{proof}
Let $\lambda_r$ be the largest eigenvalue of the above Hermitian matrix. 
Expanding the characteristic determinant along the third column gives 
\begin{align*}
(\lambda_r+r)(\lambda_r- \abs{h_{12}})(\lambda_r+\abs{h_{12}}) &= \lambda_r (\abs{h_{13}}^2 + \abs{h_{23}}^2) + 2 \Re (h_{12} \overline{h_{13}}h_{23})\\
& \leq \lambda_r(\abs{h_{13}}^2 + \abs{h_{23}}^2 ) + 2 \abs{h_{12}\overline{h_{13}}h_{23}} \\
& \leq \lambda_r ( \abs{h_{13}}^2 + \abs{h_{23}}^2) + \abs{h_{12}}( \abs{h_{13}}^2 + \abs{h_{23}}^2)\\
& \leq ( \lambda_r + \abs{h_{12}}) (\abs{h_{13}}^2+ \abs{h_{23}}^2).
\end{align*}
Dividing both sides by $(\lambda_r+r)( \lambda_r + \abs{h_{12}})/r$ and noting that by the Cauchy interlacing theorem, $\lambda_r \geq \abs{h_{12}}$ gives
\begin{equation*}
0 \leq r( \lambda_r - \abs{h_{12}}) \leq \frac{r(\abs{h_{13}}^2 + \abs{h_{23}}^2)}{\lambda_r + r} \leq \abs{h_{13}}^2 + \abs{h_{23}}^2,
\end{equation*}
so $\lambda_r \to \abs{h_{12}}$ almost surely and so, ${r}/(\lambda_r+r) \to 1$, as $r \to \infty$. Noting $\abs{h_{12}}> 0$,
\begin{align*}
r ( \lambda_r - \abs{h_{12}}) &= \frac{r\lambda_r (\abs{h_{13}}^2 + \abs{h_{23}}^2) + 2 r\Re (h_{12} \overline{h_{13}}h_{23})}{(\lambda_r+r)(\lambda_r+ \abs{h_{12}})}\\
& \longrightarrow \frac{\abs{h_{13}}^2 + \abs{h_{23}}^2}{2} + \frac{\Re(h_{12}\overline{h_{13}}h_{23})}{\abs{h_{12}}}, 
\end{align*}
as $r \to \infty$.   
Moreover, the preceding bound gives, for every $r>0$,
$
0\le r\bigl(\lambda_r-|h_{12}|\bigr)
\le |h_{13}|^2+|h_{23}|^2$. 
Since $\mathbb E\bigl(|h_{13}|^2+|h_{23}|^2\bigr)<\infty$,
the family $r(\lambda_r-|h_{12}|)$ is dominated by an integrable random variable independent of $r$. Therefore, by the dominated convergence theorem, $\lim_{r\to\infty}
r\left(\mathbb E\lambda_r-\mathbb E\abs{h_{12}}\right)= \mathbb E\!\left[
\lim_{r\to\infty}r(\lambda_r-|h_{12}|)
\right]$. 
Using the almost-sure limit obtained above, 
$$\lim_{r\to\infty}r(\lambda_r-|h_{12}|) = 
\frac{|h_{13}|^2+|h_{23}|^2}{2}
+ \frac{\Re(h_{12}\overline{h_{13}}h_{23})}{|h_{12}|},$$
we obtain 
$\lim_{r\to\infty}
r\left(\mathbb E\lambda_r-\mathbb E|h_{12}|\right)
= \frac{{\mathbb E}|h_{13}|^2+{\mathbb E}|h_{23}|^2}{2},$ 
because, by independence and centering, $
{\mathbb E}\left[
\left.
\frac{\Re(h_{12}\overline{h_{13}}h_{23})}{|h_{12}|}
\right|h_{12}
\right]=0$ a.s. 
Thus $\mathbb E\lambda_r=\mathbb E|h_{12}|
+\frac{\mathbb E|h_{13}|^2+\mathbb E|h_{23}|^2}{2r}
+o(r^{-1})$.  
\end{proof}

We now use these asymptotic comparisons to show that a largest eigenvalue representation valid for every endpoint forces projected isotropy and a common off-diagonal variance.  This is presented through a two-stage rigidity argument as follows:

(i) two-coordinate degeneration gives $v_{ij}=\frac12(e_i-e_j)^T\Sigma(e_i-e_j)$; 

(ii) three-coordinate degeneration combined with the $r^{-1}$ asymptotics of the previous two lemmas forces $v_{ij}=v_{ik}=v_{jk}$.

\noindent
The resulting quadratic-form identity then yields
$x^T\Sigma x=c\|x\|^2$, $x\in\one^\perp$, 
and thus $Q\Sigma Q=cQ$.

\begin{thm}\label{thm:mainclassification}
Let $M \geq 2$ and let $Q \Sigma Q$ be positive definite on $\one^\perp$. Let $H=(h_{ij})_{1\le i,j \le M} \in {\cal H}_M$, where $\mathbf d(H)=0$ and for $1 \leq i < j \leq M$, $h_{ij}$ are independent with $h_{ij} \sim N(0, v_{ij})_{\CC}$. Define $\beta^\Sigma (t) = QB^\Sigma(t) - tQB^\Sigma(1).$ Then, 
\begin{equation*}
\max_{0 = t_0 \leq t_1 \leq \cdots \leq t_M=1} \sum_{i=1}^M \left [d_i(t_i-t_{i-1}) + \beta_i^\Sigma(t_i) - \beta_i^\Sigma(t_{i-1}) \right ] \stackrel{d} = \lambda_{\textrm{max}} \left ( \diag(d)+H \right )
\end{equation*}
for every $d \in \RR^M$ if and only if there exists $c> 0$ such that $Q \Sigma Q = cQ$ and $v_{ij} = c$ for every $1 \leq i < j \leq M$.
\end{thm}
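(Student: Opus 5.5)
The sufficiency direction is exactly Corollary~\ref{cor:conditional}, so the work is in necessity. Assume the identity holds for every $d\in\RR^M$. The plan is to send some coordinates of $d$ to $-\infty$ and compare both sides, first in dimension two and then in dimension three, as announced before the statement.

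\emph{Step 1: two-coordinate degeneration.} Fix $i<j$ and take $d$ with $d_i=d_j=0$ and $d_k=-r$ for $k\neq i,j$. As $r\to\infty$, the optimal times in the last-passage functional give vanishing duration to the penalized coordinates. The functional then converges almost surely to
\[
\max_{0\le s\le 1}\{\beta^\Sigma_i(s)+\beta^\Sigma_j(1)-\beta^\Sigma_j(s)\}=\max_s\bigl(\beta^\Sigma_i-\beta^\Sigma_j\bigr)(s),
\]
using $\beta^\Sigma(1)=0$. Here $\beta^\Sigma_i-\beta^\Sigma_j$ is a scalar bridge with variance parameter $(e_i-e_j)^T\Sigma(e_i-e_j)$, since $Q(e_i-e_j)=e_i-e_j$. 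On the matrix side, $\lambda_{\max}(\diag(d)+H)$ converges to $\lambda_{\max}$ of the $2\times2$ block, which is $|h_{ij}|$. This convergence follows by interlacing together with a Schur-complement perturbation bound. Equating the laws gives
\[
\max_s b(s)\stackrel d=|h_{ij}|,
\]
where $b$ is the bridge above. The bridge maximum with parameter $\sigma^2$ has $\PP(\max b>x)=e^{-2x^2/\sigma^2}$, and $|h_{ij}|^2$ is exponential with mean $v_{ij}$. Hence
\[
v_{ij}=\tfrac12 (e_i-e_j)^T\Sigma(e_i-e_j)>0,
\]
where positivity comes from $Q\Sigma Q>0$ on $\one^\perp$. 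Before taking limits I will check that dominated convergence is not needed, because convergence in law suffices at this stage.

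\emph{Step 2: three-coordinate degeneration.} Fix $i<j<k$ and send all other coordinates to $-\infty$. This gives an identity for each $(d_i,d_j,d_k)$ between a three-coordinate bridge functional and the $3\times3$ matrix $\lambda_{\max}$ built from $h_{ij},h_{ik},h_{jk}$. The limiting identity inherits validity for all such $d$ from the hypothesis by a continuity/limit argument. Next set $d=(0,0,-r)$ and write
\[
X=\beta^\Sigma_i-\beta^\Sigma_j,\qquad Y=\beta^\Sigma_j-\beta^\Sigma_k.
\]
Telescoping, the functional equals $\max_{s\le t}\{X(s)+Y(t)-r(1-t)\}$ plus a term that vanishes by $\beta(1)=0$. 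Lemma~\ref{lem:boundary} then gives the expectation
\[
\EE\max X+\frac{(e_j-e_k)^T\Sigma(e_j-e_k)}{2r}+o(r^{-1})=\EE\max X+\frac{v_{jk}}{r}+o(r^{-1}),
\]
using Step 1. Lemma~\ref{lem:matrixboundary}, applied with the indices ordered $i,j,k$, gives
\[
\EE|h_{ij}|+\frac{v_{ik}+v_{jk}}{2r}+o(r^{-1}).
\]
The zeroth-order terms agree by Step 1. Comparing the $r^{-1}$ terms yields $v_{jk}=v_{ik}$.

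A subtle point is the choice of which coordinate receives $-r$, because the functional's ordering matters. For the first or middle coordinate I use the time-reversal/relabeling analogue of Lemma~\ref{lem:boundary}: penalizing the first coordinate and reversing time, or penalizing the middle coordinate and splitting. Permuting the roles in this way should give $v_{ij}=v_{ik}$ as well. Thus all $v$'s in every triple coincide, and since every pair lies in some triple when $M\ge3$, all $v_{ij}$ equal a common $c>0$. If the middle-penalty case does not reduce cleanly, I would instead use only first/last penalties on the two triples $(i,j,k)$, which already give two equalities out of three. Transitivity across triples then finishes the argument.

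\emph{Step 3: conclude isotropy.} Step 1 now says $(e_i-e_j)^T\Sigma(e_i-e_j)=2c=c\|e_i-e_j\|^2$ for all $i,j$. Since $\Sigma$ is symmetric, polarization over the spanning set $\{e_i-e_j\}$ of $\one^\perp$ yields
\[
(e_i-e_j)^T\Sigma(e_k-e_l)=c\,(e_i-e_j)^T(e_k-e_l).
\]
This polarization uses the identity $2a^T\Sigma b=(a+b)^T\Sigma(a+b)-a^T\Sigma a-b^T\Sigma b$, where each of $a$, $b$, $a+b$ is a difference of basis vectors whenever $a$ and $b$ share an index. General pairs then follow by linearity. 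Hence $x^T\Sigma y=c\,x^Ty$ on $\one^\perp$, i.e.\ $Q\Sigma Q=cQ$. For $M=2$, Step 1 alone suffices.

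The main obstacle is Step 2. It requires justifying that the degeneration limits commute with the identity in law, that expectations pass to the limit with the needed $o(r^{-1})$ precision, and that the correct penalized coordinate is matched with Lemmas~\ref{lem:boundary} and~\ref{lem:matrixboundary}.
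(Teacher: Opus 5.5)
Your proposal is correct and follows essentially the paper's route. Two-coordinate degeneration identifies $v_{ij}=\tfrac12(e_i-e_j)^T\Sigma(e_i-e_j)$; you compare tails where the paper compares second moments. Three-coordinate degeneration with the last coordinate penalized, and then the first coordinate penalized after time reversal, combined with Lemmas~\ref{lem:boundary} and~\ref{lem:matrixboundary}, forces $v_{ij}=v_{ik}=v_{jk}$. Polarization then gives $Q\Sigma Q=cQ$.

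Your ``fallback'' of using only first and last penalties is exactly what the paper does, so the middle-penalty case is never needed.
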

\begin{proof}
First, suppose that the identity holds for every $d \in \RR^M.$ Fix $i<j$, let $d_i = d_j=0$, let $d_\ell = -r$ for $\ell \not \in \{i, j\}$, and choose $0 = t_0^{(r)} \leq \cdots \leq t_M^{(r)} = 1$ so the maximum is attained. For $0 \leq t \leq 1$, setting $t_0 = \cdots = t_{i-1}=0$, $t_i=\cdots = t_{j-1} = t$, and $t_j = \cdots = t_M=1$ is admissible and yields $\beta_i^\Sigma(t) - \beta_j^\Sigma(t)$, so the maximal functional is nonnegative. At the chosen times, bounding the bridge increments yields
\begin{equation*}
0 \leq r \sum_{ \ell \not \in \{i, j\}} (t_\ell^{(r)} - t_{ \ell - 1}^{(r)} ) \leq 2 \sum_{\ell =1}^M \max_{ 0 \leq u \leq 1} \abs{\beta_\ell^\Sigma(u)},
\end{equation*}
and thus $\sum_{\ell \not \in \{i, j \}}(t_\ell^{(r)}-t_{\ell-1}^{(r)}) \to 0$. The set of $(t_0, \ldots, t_M)$ satisfying $0=t_0 \leq \cdots \leq t_M=1$ is compact, hence for any sequence $r \to \infty$, there is a convergent subsequence  of the chosen times with limit of the form $t_0 = \cdots = t_{i-1}=0$, $t_i = \cdots = t_{j-1} =t$, and $t_j= \cdots= t_M=1$. By continuity, the bridge increments along this subsequence converge to $\beta_i^\Sigma(t) - \beta_j^\Sigma(t)$, and since $-r \sum_{\ell \not \in \{i, j\}}(t_\ell^{(r)} -t_{\ell-1}^{(r)}) \leq 0$, 
\begin{eqnarray*}
\limsup_{r \to \infty} \max_{0=t_0 \leq t_1 \leq \cdots \leq t_M=1}
\sum_{k=1}^M\left (d_k(t_k-t_{k-1})+\beta_k^\Sigma(t_k)-\beta_k^\Sigma(t_{k-1})\right ) \\
\le \max_{0 \le t \leq 1}(\beta_i^\Sigma(t) - \beta_j^\Sigma(t)).
\end{eqnarray*}
The reverse inequality for every $r$ follows by varying $t$. Thus, the maximal functional converges almost surely to this maximum.

Let $\lambda_r = \lambda_{\textrm{max}}(\diag(d) + H)$. Noting that $\diag(d)$ is negative semidefinite, applying the Cauchy interlacing theorem and eigenvalue monotonicity gives $\abs{h_{ij}} \leq \lambda_r \leq \lambda_{\textrm{max}}(H)$. If $M= 2$, then $\lambda_r = \abs{h_{12}}$.  If $M \geq 3$, symmetrically permute rows and columns and let $A  = \begin{pmatrix}
0 & h _{ij} \\
\overline{h_{ij}}  & 0
\end{pmatrix}$
and $\diag(d) +H = \begin{pmatrix}
A & C\\
C^* & D-rI_{M-2}
\end{pmatrix}$, where $C$ and $D$ are independent of $r$. For $r> \norm{D}$, the lower right block is invertible and then the Schur complement equation for $\lambda_rI - (\diag(d)+H)$ is
\begin{equation*}
\det \left ( \lambda_r I_2 - A -C [(r + \lambda_r)I_{M-2}-D]^{-1}C^* \right ) =0,
\end{equation*}
and because $\lambda_r \geq 0$,
\begin{equation*}
\norm{C [(r + \lambda_r)I_{M-2}-D]^{-1}C^*} \leq \frac{\norm{C}^2}{r- \norm{D}} \longrightarrow 0.
\end{equation*}
Since $A$ has eigenvalues $\pm \abs{h_{ij}}$  and $\lambda_r \geq \abs{h_{ij}}$,  the determinant forces $\lambda_r$ to have a unique limit point $\abs{h_{ij}}$, so $\lambda_r \to \abs{h_{ij}}$ almost surely. Letting $r  \to \infty$, 
\begin{equation*}
\max_{0 \leq t \leq 1} \left ( \beta_i^\Sigma(t) - \beta_j^\Sigma(t) \right ) \stackrel{d} = \abs{h_{ij}}.
\end{equation*}
The bridge $\beta_i^\Sigma-\beta_j^\Sigma$ has variance parameter $(e_i-e_j)^T \Sigma(e_i-e_j)>0$ since $e_i-e_j \in \one^\perp$, and the reflection formula shows the tail is  $\exp(-2y^2/((e_i-e_j)^T \Sigma(e_i-e_j)))$ for $y \geq 0$. Therefore,
\begin{align*}
\EE \left [ \left ( \max_{0 \leq t \leq 1} \left ( \beta_i^\Sigma(t) - \beta_j^\Sigma(t) \right ) \right)^2 \right ] & = 2 \int_0^\infty y \exp \left ( - \frac{2y^2}{(e_i-e_j)^T \Sigma(e_i-e_j)} \right ) dy \\
&= \frac{1}{2}(e_i-e_j)^T \Sigma(e_i-e_j).
\end{align*}
Since $\EE \abs{h_{ij}}^2 = v_{ij}$, equality in distribution yields
\begin{equation*}
v_{ij} = \frac{1}{2}(e_i-e_j)^T \Sigma(e_i-e_j) > 0
\end{equation*}
for every $1 \leq i < j \leq M.$ In the case $M=2$, let $c = v_{12}>0$.

Next, suppose $M \geq 3$,  fix $i < j < k$, and fix $d_i, d_j, d_k$. Let $d_ \ell  =-r$ for $\ell \not \in \{i, j, k\}$ and choose $0 = t_0^{(r)} \leq \cdots \leq t_M^{(r)} = 1$ so that the maximum, $F_r$, is attained.  Let $\delta_r:=\sum_{\ell\notin\{i,j,k\}}
(t_\ell^{(r)}-t_{\ell-1}^{(r)})$.  Then, pathwise, 
$r\delta_r\le C(\beta^\Sigma,d_i,d_j,d_k)$,
where the right-hand side is finite and independent of $r$. 
Indeed, comparison with any admissible choice having all increments outside 
$\{i,j,k\}$ equal to zero, together with the uniform bound on the bridge increments, gives this estimate. 
Thus $\delta_r\to 0$ almost surely. 

By compactness of the admissible simplex, every sequence of maximizing
vectors has a convergent subsequence. Since every increment
$t_\ell^{(r)}-t_{\ell-1}^{(r)}, \ell\notin\{i,j,k\}$, is bounded by
$\delta_r$, all such increments vanish along any limiting subsequence.
Thus every limit is determined by two parameters $0\le s\le t\le 1 $.
Using continuity of the bridge paths and the fact that the penalization term
$-r\delta_r \le 0$, we obtain
$$
\limsup_{r\to\infty}F_r
\leq
\max_{0\leq s\leq t\leq1}
\left\{
d_i s+d_j(t-s)+d_k(1-t)
+(\beta_i^\Sigma-\beta_j^\Sigma)(s)
+(\beta_j^\Sigma-\beta_k^\Sigma)(t)
\right\}.
$$
Conversely, every $0\le s\le t\le 1$ can be realized by an admissible choice with all increments outside $\{i,j,k\}$  equal to zero, so the reverse $\liminf$  inequality holds. Therefore, the maximal functional converges almost surely to
\begin{equation*}
\max_{0 \leq s \leq t \leq 1} \{ d_i s + d_j(t-s) + d_k(1-t) + ( \beta_i^\Sigma -\beta_j^\Sigma)(s) + (\beta_j^\Sigma - \beta_k^\Sigma) (t) \}.
\end{equation*}

Apply the same Schur-complement argument to the principal $3\times 3$ block indexed by $i,j,k$, after a simultaneous permutation of rows and columns.  
The Cauchy interlacing theorem and $\lambda_{\textrm{max}}(\diag(d)+H) \leq \lambda_{\textrm{max}}(H)+\max \left \{ d_i, d_j, d_k, 0 \right \}$ bound the maximum eigenvalue uniformly in $r$, so the correction term in the Schur complement is $O(r^{-1})$, while the determinant and interlacing prove convergence to the minor's largest eigenvalue. Then, for every $d_i, d_j, d_k \in \RR$,
\begin{equation*}
\max_{0 \leq s \leq t \leq 1} \{ d_i s + d_j(t-s) + d_k(1-t) + (\beta_i^\Sigma - \beta_j^\Sigma)(s) + (\beta_j^\Sigma - \beta_k^\Sigma)(t) \}
\end{equation*}
has the same law as $ \lambda_{\textrm{max}}
\begin{pmatrix}
d_i & h_{ij} & h_{ik}\\
\overline{h_{ij}} & d_j & h_{jk} \\
\overline{h_{ik}} & \overline{h_{jk}} & d_k
\end{pmatrix}$.

The variance parameter of $\beta_j^\Sigma -\beta_k^\Sigma$ for $(d_i, d_j, d_k) = (0, 0 , -r)$  is $2v_{jk}$. By Lemma \ref{lem:boundary} with $X = \beta_i^\Sigma- \beta_j^\Sigma$ and $Y = \beta_j^\Sigma - \beta_k^\Sigma$ and Lemma \ref{lem:matrixboundary} with $(h_{12}, h_{13}, h_{23}) = (h_{ij}, h_{ik}, h_{jk})$,
\begin{equation*}
\EE \max_{0 \leq t \leq 1} [\beta_i^\Sigma(t) - \beta_j^\Sigma(t) ] + \frac{v_{jk}}{r} + o(r^{-1}) = \EE \abs{h_{ij}} + \frac{v_{ik} + v_{jk}}{2r} + o(r^{-1}).
\end{equation*}
Next, consider $(d_i, d_j, d_k) = (-r, 0,0).$ Since $\cov(\beta^\Sigma(1-s), \beta^\Sigma (1-t)) = (s \wedge t -st) Q \Sigma Q$, the process $\beta^\Sigma$ is invariant under time reversal, and after changing variables $(s,t) \mapsto (1-t,1-s)$, it follows that
\begin{equation*}
\max_{0 \leq s \leq t \leq 1} \{ - rs + ( \beta_i^\Sigma- \beta_j^\Sigma)(s) + (\beta_j^\Sigma - \beta_k^\Sigma) (t) \}
\end{equation*}
has the same law as
\begin{equation*}
\max_{0 \leq s \leq t \leq 1} \{(\beta_j^\Sigma - \beta_k^\Sigma)(s) + (\beta_i^\Sigma- \beta_j^\Sigma)(t) - r(1-t) \}.
\end{equation*}
After reordering the rows and columns from $i,j,k$ to $k,j,i$, Lemma \ref{lem:boundary}
 and Lemma~\ref{lem:matrixboundary}
 give $v_{ij}/r$ and $(v_{ik}+v_{ij})/(2r)$  as the $r^{-1}$ terms in the expectations, so
 \begin{equation*}
\EE \max_{ 0 \leq t \leq 1} [\beta_j^\Sigma(t) - \beta_k^\Sigma(t) ]+ \frac{v_{ij}}{r} + o(r^{-1}) = \EE \abs{h_{jk}} + \frac{v_{ij} + v_{ik}}{2r} + o(r^{-1}).
\end{equation*}
The identities for $(i,j)$ and $(j,k)$ show the constant terms agree. Subtracting the common constant term in each expansion, multiplying by $r$ and taking the limit as $r \to \infty$ gives $v_{jk} = (v_{ik}+v_{jk})/2$ and $v_{ij} = (v_{ij} + v_{ik})/2$, hence $v_{ij}=v_{ik}= v_{jk}.$ Let $c =v_{ij}>0$. Thus, for $M\ge 2$, $v_{ij} =(e_i-e_j)^T\Sigma(e_i-e_j)/2 = c > 0$, for all $i<j$. Next, for $x\in\one^\perp$, 
\begin{align*}
x^T\Sigma x= -\frac12\sum_{i,j=1}^M
x_i x_j\,(e_i-e_j)^T\Sigma(e_i-e_j)=-2c\sum_{1\le i<j\le M}x_i x_j =c\|x\|^2.
\end{align*}
Hence, by polarization, $x^T\Sigma y=c\,x^Ty$ for all
$x,y\in\one^\perp$. Equivalently, the restriction of $Q\Sigma Q$
to $\one^\perp$ is $cI_{\one^\perp}$. Since both $Q\Sigma Q$ 
and $cQ$ annihilate $\one$, it follows that $Q\Sigma Q=cQ$.

Conversely, if $Q \Sigma Q = cQ$ and $v_{ij} = c>0$ for all $i<j,$  the result follows from Corollary \ref{cor:conditional}.
\end{proof}

To finish this section, we characterize the transition matrices of Markov chains with uniform stationary distribution whose asymptotic covariance satisfies
$\Sigma=c\left(I-\frac1M{\bf 1}{\bf 1}^T\right)$. 
This extends the characterization obtained in \cite[Section 4]{HL2} and provides a constructive description in terms of a skew-symmetric operator on ${\bf 1}^\perp$.  For the asymptotic covariance matrix of a Markov chain with uniform stationary distribution, one has $\Sigma\one =0$, and hence $Q\Sigma Q=\Sigma$. Therefore, the isotropy condition $Q\Sigma Q=cQ$ of the previous results reduces in this setting to $\Sigma=cQ$.  
We then conclude by translating the condition $\Sigma=cQ$ into an explicit parametrization of the irreducible, aperiodic Markov transition matrices with uniform stationary distribution that realize it.

\begin{thm}\label{thm:expandedclassification}
Fix $M \geq 2$ and $c>0$, and let $U: \RR^{M-1} \to \one^\perp$ be an isometry. The $M \times M$ matrix $\mathsf P$ with nonnegative entries is the transition matrix of an irreducible aperiodic Markov chain with uniform stationary distribution and asymptotic covariance matrix 
\begin{equation}\label{SigmaStationary}
\Sigma =c \left ( I - \frac{1}{M} \one \one^T \right ),
\end{equation}
if and only if 
\begin{equation}\label{characterizationP}
\mathsf P = \frac{1}{M} \one \one^T+ U \left [ I - \left (\frac{1+Mc}{2}I +\mathsf K \right )^{-1} \right ]U^T,
\end{equation}
for some real skew-symmetric matrix $\mathsf K \in \RR^{(M-1)\times (M-1)}.$
\end{thm}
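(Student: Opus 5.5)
The plan is to work entirely on the decomposition $\RR^M=\one^\perp\oplus\operatorname{span}\{\one\}$ and to write everything in terms of the fundamental matrix. I would proceed in four steps.

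\textbf{Step 1: block form of $\mathsf P$.} Let $\Pi=\frac1M\one\one^T$. A nonnegative $\mathsf P$ with $\mathsf P\one=\one$ and uniform stationary distribution $\one^T\mathsf P=\one^T$ is doubly stochastic. Hence $\mathsf P$ commutes with $\Pi$ and leaves $\one^\perp$ invariant, so
$$\mathsf P=\Pi+U\mathsf AU^T,\qquad \mathsf A:=U^T\mathsf PU\in\RR^{(M-1)\times(M-1)}.$$
Conversely, any matrix of this form satisfies $\mathsf P\one=\one$ and $\one^T\mathsf P=\one^T$, because $U^T\one=0$.

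\textbf{Step 2: the covariance in terms of $\mathsf A$.} I would recall the asymptotic covariance of the letter-count vector from \cite{HL2}. For uniform $\pi$ it should read
$$M\Sigma=(\mathsf Z-\Pi)+(\mathsf Z-\Pi)^T-(I-\Pi),\qquad \mathsf Z=(I-\mathsf P+\Pi)^{-1},$$
where $\mathsf Z-\Pi=\sum_{k\ge0}(\mathsf P^k-\Pi)$. Using the block form, $I-\mathsf P+\Pi=\Pi+U(I-\mathsf A)U^T$, and therefore $\mathsf Z-\Pi=U(I-\mathsf A)^{-1}U^T$. Writing $\mathsf B=(I-\mathsf A)^{-1}$ and $I-\Pi=Q=UU^T$, we get
$$M\Sigma=U\bigl(\mathsf B+\mathsf B^T-I\bigr)U^T.$$
Since $U$ is an isometry onto $\one^\perp$, the condition \eqref{SigmaStationary} is then equivalent to $\mathsf B+\mathsf B^T=(1+Mc)I$. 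This says exactly that $\mathsf B=\frac{1+Mc}{2}I+\mathsf K$ with $\mathsf K$ skew-symmetric. Inverting gives $\mathsf A=I-\mathsf B^{-1}$, which is \eqref{characterizationP}. This proves the forward direction.

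One point needs checking: the normalization of $\Sigma$ in \cite{HL2}, which may carry an extra factor $M$ or $1/M$. That factor would only rescale $c$ in the formula, so I would settle it first.

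\textbf{Step 3: converse, algebraic part.} Given any skew-symmetric $\mathsf K$, the matrix $\mathsf B=\frac{1+Mc}2I+\mathsf K$ is invertible. Indeed, $x^T\mathsf Bx=\frac{1+Mc}2\|x\|^2$, so every eigenvalue $\beta$ of $\mathsf B$ satisfies $\Re\beta=\frac{1+Mc}2>\frac12$. Thus the right-hand side of \eqref{characterizationP} is well defined, and Step 1 shows it has unit row and column sums. By hypothesis its entries are nonnegative.

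\textbf{Step 4: converse, irreducibility and aperiodicity.} This is the step I expect to be the main obstacle, since these properties must be deduced from the formula alone. The eigenvalues of $\mathsf A=I-\mathsf B^{-1}$ are $1-1/\beta$. A direct computation shows that $|1-1/\beta|<1$ is equivalent to $\Re\beta>\frac12$, which holds here. Hence the spectral radius of $\mathsf A$ is less than $1$, and on $\one^\perp$ we have $\mathsf P^n=U\mathsf A^nU^T\to0$. Therefore $\mathsf P^n\to\Pi$, a matrix with all entries positive, so $\mathsf P^n>0$ entrywise for large $n$. A nonnegative matrix with a strictly positive power is primitive, which gives irreducibility and aperiodicity.

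With ergodicity established, the fundamental-matrix formula of Step 2 applies. The computation there, reversed, gives $\Sigma=cQ$, completing the equivalence.
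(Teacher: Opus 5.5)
Your proposal is correct and follows essentially the same route as the paper: restrict to $\one^\perp$ via $U$, identify $MU^T\Sigma U=\mathsf B+\mathsf B^T-I$ with $\mathsf B=(I-U^T\mathsf PU)^{-1}$, read off the skew-symmetric part, and for the converse show $(U^T\mathsf PU)^n\to 0$, so that $\mathsf P^n\to\frac1M\one\one^T$ is eventually entrywise positive. The differences are cosmetic: you package the series through the fundamental matrix and bound the spectral radius via $\Re\beta>\frac12$, whereas the paper sums the series directly on $\one^\perp$ and bounds the operator norm using $I-(I-R^{-1})^T(I-R^{-1})=McR^{-T}R^{-1}$; your normalization of $\Sigma$ agrees with the paper's.
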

\begin{proof}
Let $\Sigma$ have the form given in \eqref{SigmaStationary} and let $Z_i$ be the indicator vector of the state at time $i$ for the chain started from its stationary distribution. Then, $\cov (Z_0,Z_j) = \frac{1}{M} (\mathsf P^j - \frac{1}{M} \one \one^T)$ and $\cov(Z_j, Z_0) = \frac{1}{M} ( (\mathsf P^T)^j - \frac{1}{M} \one \one^T)$ for positive integer $j$, so by definition,
\begin{equation*}
\Sigma = \frac{1}{M} \left ( I - \frac{1}{M} \one \one^T \right ) + \frac{1}{M} \sum_{j \geq 1} \left [ \mathsf P^j +(\mathsf P^T)^j - \frac{2}{M} \one \one^T \right ].
\end{equation*}
Restricting to $\one^\perp$ and summing the geometric series,
\begin{align*}
MU^T \Sigma U &= I +\sum_{j \geq 1} \left [ (U^T \mathsf P U)^j + ((U^T \mathsf P U)^T)^j \right]\\
&= \left ( I - U^T \mathsf P U \right )^{-1} +\left (I - (U^T \mathsf P U)^T \right )^{-1}-I.
\end{align*}
Since $U^T \Sigma U = cI$,
\begin{equation*}
\left ( I - U^T \mathsf P U \right )^{-1} + \left ( I - (U^T \mathsf P U)^T \right)^{-1} = (1+Mc)I,
\end{equation*}
so 
\begin{equation*}
\left ( I - U^T \mathsf P U  \right)^{-1} = \frac{1+Mc}{2} I + \mathsf K, 
\end{equation*}
for some real skew-symmetric matrix $\mathsf K$. Finally, since the stationary distribution is the uniform one, $\mathsf P$ is then doubly stochastic and so $\mathsf P = \frac{1}{M} \one \one^T + U(U^T \mathsf P U)U^T$. Solving for $\mathsf P$ gives \eqref{characterizationP}.

Conversely, assume \eqref{characterizationP} and let $R = (1+Mc)I/2 +\mathsf K$. Since $c>0$ and $\mathsf K$ is skew-symmetric, $R$ is invertible and
\begin{align*}
I - (I-R^{-1})^T (I-R^{-1}) & = R^{-T}(R + R^T -I) R^{-1}\\
& = McR^{-T} R^{-1}
\end{align*}
is positive definite, so $\norm{I-R^{-1}}_2 < 1$. Moreover, $\mathsf P \one = \mathsf P^T \one = \one$ and
\begin{equation*}
\mathsf P^n = \frac{1}{M} \one \one^T + U(I-R^{-1})^n U^T \longrightarrow \frac{1}{M} \one \one^T. 
\end{equation*}
Since $\mathsf P$ is assumed entrywise nonnegative and $\mathsf P\mathbf1=\mathbf 1$, it is a transition matrix. Since also $\mathsf P^T\one = {\bf 1}$, it is doubly stochastic.  Moreover, $\mathsf P^n$ is entrywise positive for all sufficiently large $n$, and its stationary distribution is uniform. Therefore, the chain is irreducible and aperiodic with uniform stationary distribution. Applying the earlier covariance identity with $U^T \mathsf P U = I - R^{-1}$ gives $MU^T \Sigma U = R+R^T-I=McI$. Since $\Sigma$ is symmetric and $\Sigma \one =0$, using $UU^T = I - \frac{1}{M} \one \one ^T$ gives \eqref{SigmaStationary}. 
    
\end{proof}

\section{Counterexamples to Random-Matrix Representability}

The preceding section identifies a natural covariance structure under which the limiting Brownian functional associated with a Markov random word admits a Gaussian random-matrix representation. We now show that, outside this regime, the failure can be substantially stronger than the absence of a representation within the particular Gaussian ensemble considered above: for certain Markov chains, the limiting functional cannot have the law of the maximal eigenvalue of any centered jointly Gaussian Hermitian matrix whose dimension equals the alphabet size.

The random-matrix paradigm can therefore genuinely fail outside the isotropic situation, showing that the Markovian limiting laws exhibit phenomena beyond those captured by GUE-like representations.
The obstruction is based on small-deviation probabilities. The event that the Brownian last-passage functional is small forces an associated Brownian motion to remain inside a suitable cone while ending in a small neighborhood of the origin. Estimates for Brownian motion in cones then give an upper bound on this probability that decays too rapidly to be compatible with the small-ball behavior of a Gaussian random matrix.  We first carry out this argument for an explicit four-state cyclic chain and then show that the same phenomenon already occurs for a non-cyclic three-state chain.

The counterexamples rely on a small-deviation estimate for Brownian motion confined to a wedge and ending near its vertex, which we record next.

\begin{lemma}\label{lem:weyl-small-ball}
Let $\mathcal D \subset \RR^2$ be an open wedge with vertex at the origin and opening $\theta$ with $0 < \theta < 2\pi$. For standard Brownian motion $B$ in $\RR^2$, let $\PP_x$ be its law when started at $x$. Let $x_0 \in \mathcal D$, and let $E \subset \RR^2$ be a bounded Borel set. Then 
\begin{equation}\label{firstboundary}
\PP_{\varepsilon x_0} \left ( B(s) \in \overline{\mathcal D} (0 \le s \le 1), B(1) \in \varepsilon E \right ) = O ( \varepsilon^{2(1 +\pi/\theta)} )
\end{equation}
as $\varepsilon \downarrow 0$. Since $\cal D$ is a cone, the endpoint condition on the event above may equivalently be written
$B(1)\in\varepsilon(E\cap {\cal D})$.
\end{lemma}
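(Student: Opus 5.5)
We will use the explicit heat kernel of the Brownian motion killed on leaving the wedge, together with Brownian scaling. Set $\alpha=\pi/\theta$ and write $p^{\mathcal D}_t(x,y)$ for the transition density of $B$ killed on exiting $\mathcal D$. In polar coordinates $x=(r,\phi)$, $y=(\rho,\psi)$, with the wedge $\{0<\phi<\theta\}$, the classical eigenfunction expansion reads
\begin{equation*}
p^{\mathcal D}_t(x,y)=\frac{2}{\theta t}\,e^{-(r^2+\rho^2)/(2t)}\sum_{n\ge1} I_{n\alpha}\!\left(\frac{r\rho}{t}\right)\sin(n\alpha\phi)\sin(n\alpha\psi),
\end{equation*}
where $I_\nu$ is the modified Bessel function.

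The first step is to pass from $\overline{\mathcal D}$ to $\mathcal D$. The boundary $\partial\mathcal D$ consists of two rays, and planar Brownian motion that touches a ray a.s. immediately enters its complement. Hence, up to a null set, the event $\{B(s)\in\overline{\mathcal D}\ \forall s\le1\}$ coincides with $\{\tau_{\mathcal D}>1\}$, where $\tau_{\mathcal D}$ is the exit time of $\mathcal D$. Consequently the probability in \eqref{firstboundary} equals
\begin{equation*}
\int_{\varepsilon(E\cap\mathcal D)} p^{\mathcal D}_1(\varepsilon x_0,y)\,dy .
\end{equation*}
This also justifies the final remark of the statement.

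The second step is a pointwise bound on the kernel in a neighbourhood of the vertex. Choose $R$ with $E\subset B(0,R)$. We need, uniformly for $|x|,|y|\le C\varepsilon$ with $C=\max(R,|x_0|)$,
\begin{equation*}
p^{\mathcal D}_1(x,y)\le K\,|x|^{\alpha}|y|^{\alpha}.
\end{equation*}
To get it, use $I_\nu(z)\le (z/2)^\nu e^{z}/\Gamma(\nu+1)$ for $z\ge0$ and $|\sin|\le1$. With $z=r\rho\le C^2\varepsilon^2\le1$, the series is dominated by
\begin{equation*}
e\sum_{n\ge 1}\frac{(z/2)^{n\alpha}}{\Gamma(n\alpha+1)}\le K' z^{\alpha}.
\end{equation*}
The factor $z^{\alpha}$ comes from the leading term, and the tail is geometric since $z/2\le 1/2$. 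Alternatively, one can avoid Bessel functions: apply the parabolic boundary Harnack principle / Varopoulos-type estimate $p^{\mathcal D}_1(x,y)\asymp u(x)u(y)$ for $x,y$ near the vertex, where $u(r,\phi)=r^{\alpha}\sin(\alpha\phi)$ is the positive harmonic function vanishing on $\partial\mathcal D$. The series route is more self-contained, so I would write that one.

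The third step is to integrate. Since $|x|=\varepsilon|x_0|$,
\begin{equation*}
\int_{\varepsilon(E\cap\mathcal D)} p^{\mathcal D}_1(\varepsilon x_0,y)\,dy\le K(\varepsilon|x_0|)^{\alpha}(R\varepsilon)^{\alpha}\,\mathrm{Leb}(\varepsilon E)=O\bigl(\varepsilon^{2\alpha+2}\bigr)=O\bigl(\varepsilon^{2(1+\pi/\theta)}\bigr),
\end{equation*}
which is \eqref{firstboundary}.

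The main obstacle is making the kernel bound of step two rigorous and uniform. This means checking convergence of the eigenfunction expansion and the Bessel estimate when $\alpha<1$ (that is, $\theta>\pi$, including reflex wedges up to $2\pi$). I would first verify the Bessel inequality from its power series, since it holds for all $\nu\ge0$, which covers every $\theta\in(0,2\pi)$.
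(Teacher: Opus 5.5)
Your proposal is correct and follows essentially the same route as the paper. Both arguments use the eigenfunction expansion of the killed wedge kernel with parameters $\alpha_j=j\pi/\theta$, a small-argument Bessel bound giving $p^{\mathcal D}_1(\varepsilon x_0,y)=O(\varepsilon^{2\pi/\theta})$ uniformly on $\varepsilon(E\cap\mathcal D)$, integration over a set of area $O(\varepsilon^2)$, and the strong Markov argument identifying the closed- and open-wedge path events. The differences are minor: you derive the inequality $I_\nu(z)\le (z/2)^\nu e^{z}/\Gamma(\nu+1)$ and the geometric tail bound yourself, where the paper cites Ba\~nuelos--Smits. You should also add the remark that a first exit exactly at time $1$ is a null event, since $B(1)$ has a density and $\partial\mathcal D$ has Lebesgue measure zero.
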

\begin{proof}
Choose $0< R < +\infty$ such that $E\subset B(0,R)$.
Write $\varepsilon x_0=\rho\omega_0$, and $y=r\omega$. 
For $y\in\varepsilon(E\cap {\cal D})$, 
$\rho=\varepsilon|x_0|$ and $r\le R\varepsilon$. 
In dimension two, the parameters in the cone heat-kernel expansion of \cite[equation (2.2)]{BS} are $\alpha_j={j\pi}/{\theta}$, $j\ge 1$.
Using the Bessel-function estimate in \cite[equation (2.4)]{BS} therefore gives, uniformly for $y\in\varepsilon(E\cap {\cal D})$, 
$$p_{\cal D}(1,\varepsilon x_0,y) = O\!\left((\rho r)^{\pi/\theta}\right) = O\!\left(\varepsilon^{2\pi/\theta}\right),$$
i.e., the killed transition density in $\mathcal D$ at time one, from $\varepsilon x_0$ to $y\in \varepsilon(E\cap {\cal D})$, is uniformly $O(\varepsilon^{2\pi/\theta})$. Since $\varepsilon(E\cap {\cal D})$ has area $O(\varepsilon^2)$, integration over the endpoint yields \eqref{firstboundary}. 
For later use, we also note that replacing $\cal D$ by its closure in the path condition does not change the probability.  Indeed, by the strong Markov property, if a wall is first hit before time $1$, the corresponding normal coordinate is a one-dimensional Brownian motion started at zero and almost surely does not remain on one side of zero for any positive time interval.  Now, a first hit at time $1$ is also a null event since $\partial \mathcal D$ has Lebesgue measure zero and  $B(1)$ has  a density. Thus, the open and closed wedge path events agree (up to a null event).
\end{proof}

We first apply the lemma to an explicit four-state cyclic chain.  
Let $\Pi_4 = \frac{1}{4} \one_4 \one_4^T$ and $\zeta = \frac{1}{2} (1, -1, 1, -1)^T$. Consider the Markov chain $(Y_n)_{n \ge 0}$ on $1< 2< 3<4$, started from its uniform stationary distribution, with transition matrix

\begin{equation}\label{eq:4transition}
\mathsf P = \Pi_4 - \frac{15}{17} \zeta \zeta^T = \frac{1}{34} 
\begin{pmatrix}
1&16&1&16\\
16&1&16&1\\
1&16&1&16\\
16&1&16&1\\
\end{pmatrix}.  
\end{equation}
Observe that $\mathsf P$ is symmetric, cyclic, doubly stochastic, with positive entries and eigenvalues $-\frac{15}{17}, 0, 0$ on $\one_4^\perp.$ Since $\mathsf P^k- \Pi_4=(-15/17)^k \zeta \zeta^T$ for $k \geq 1$, the standard asymptotic covariance formula, together with stationarity and symmetry, yields 
\begin{align}
\Sigma_*&= \frac{1}{4} (I_4 - \Pi_4) + \frac{1}{2} \sum_{k \geq 1}( \mathsf P^k-\Pi_4) \notag \\
&= \frac{1}{4} (I_4 - \Pi_4) - \frac{15}{64} \zeta \zeta^T \notag \\
&= \frac{1}{256}
\begin{pmatrix}
33&-1&-31&-1\\
-1 & 33 & -1 & -31 \\
-31 & -1 & 33 & -1 \\
-1 & -31 & -1 & 33
\end{pmatrix}.\label{eq:sigmastar}
\end{align}
Let $B^{\Sigma_*}$ be Brownian motion in $\RR^4$ with covariance matrix $\Sigma_*$, let $X_j = B_j^{\Sigma_*} - B_{j+1}^{\Sigma_*}$ for $1 \leq j \leq 3$, and let $F=B_4^{\Sigma_*}(1)+\max_{0 \leq t_1 \leq t_2 \leq t_3 \leq 1} \sum_{j=1}^3 X_j(t_j)$. Denote the length of the longest weakly increasing subsequence of $Y_1, \ldots, Y_N$ by $\text{LI}_N$. By \cite[Corollary 3.1]{HL2},
\begin{equation*}
\frac{\text{LI}_N-N/4}{\sqrt{N}} \Longrightarrow F.
\end{equation*}

The last-passage functional given by $(Y_n)_{n\ge 0}$ cannot have the law of the largest eigenvalue of any centered Gaussian Hermitian matrix of the same size.

\begin{thm}\label{thm:4counterexample}
For the chain with transition matrix given in \eqref{eq:4transition}, there is no centered jointly Gaussian Hermitian random matrix $H \in {\cal H}_4$ such that $\lambda_{\textrm{max}}(H)$ has the same law as $F$.
\end{thm}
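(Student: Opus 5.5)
The plan is to compare small-deviation rates. I will show that $\PP(F\le\varepsilon)=O(\varepsilon^{17})$ as $\varepsilon\downarrow0$. Every centered Gaussian Hermitian matrix $H\in{\cal H}_4$ with $\lambda_{\max}(H)\stackrel d=F$ satisfies $\PP(\lambda_{\max}(H)\le\varepsilon)\ge c\,\varepsilon^{r}$, where $r\le 16=\dim_{\RR}{\cal H}_4$. These two rates are incompatible.

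\emph{The matrix side.} Suppose $\lambda_{\max}(H)\stackrel d=F$. The matrix $H$ cannot be $0$, since $F$ is nondegenerate (it dominates $B^{\Sigma_*}_1(1)$). Let $V\subset{\cal H}_4$ be the linear support of $H$ and $r=\dim V\in[1,16]$. On $V$ the law of $H$ has a continuous density that is strictly positive at the origin. Since $\lambda_{\max}(H)\le\|H\|_{\rm op}$, this gives
$$\PP(\lambda_{\max}(H)\le\varepsilon)\ge\PP(\|H\|_{HS}\le\varepsilon)\ge c\,\varepsilon^{r}\ge c\,\varepsilon^{16},\qquad \varepsilon\le1 .$$

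\emph{The Brownian side: reduction to a cone event.} Since $\Sigma_*\one=0$, we have $\sum_iB_i^{\Sigma_*}\equiv0$ almost surely, so $B:=B^{\Sigma_*}$ lives in $\one^\perp$. Set all $t_k$ with $k<j$ to $0$ and all $t_k$ with $k\ge j$ to $1$; this shows $F\ge B_j(1)$ for every $j$. Hence on $\{F\le\varepsilon\}$ all $B_j(1)\le\varepsilon$, and together with $\sum_jB_j(1)=0$ this gives $\|B(1)\|\le 3\varepsilon$. For $i<j$ and $t\in[0,1]$, take $t_k=0$ for $k<i$, $t_k=t$ for $i\le k<j$, and $t_k=1$ for $k\ge j$. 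This gives $F\ge B_i(t)-B_j(t)+B_j(1)$. Therefore on $\{F\le\varepsilon\}$,
$$B_i(t)-B_j(t)\le 4\varepsilon\qquad\text{for all } i<j,\ t\in[0,1],$$
so the path stays in the $4\varepsilon$-neighborhood of the Weyl chamber $\{x_1\le x_2\le x_3\le x_4\}\cap\one^\perp$ and ends within $3\varepsilon$ of the vertex. I then whiten. By \eqref{eq:sigmastar}, $\Sigma_*$ has eigenvalue $1/64$ on $\zeta$ and eigenvalue $1/4$ on $\zeta^\perp\cap\one^\perp$. Let $L$ be the linear map sending $B$ to a standard Brownian motion in $\RR^3$. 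The chamber becomes a polyhedral cone $\mathcal C=L(\text{chamber})$ that is strongly squeezed in the $\zeta$ direction, by the factor $8/2=4$. Shifting the vertex by $O(\varepsilon)$ turns the event into: a standard $3$-dimensional Brownian motion started at $\varepsilon x_0$ stays in $\overline{\mathcal C}$ and ends in $\varepsilon E$, for a fixed bounded $E$ and a fixed interior point $x_0$. The shift is legitimate because the neighborhood constraint is equivalent to exact containment in a translated cone whose vertex lies at distance $O(\varepsilon)$ from $0$.

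\emph{Cone exponent, the main obstacle.} Lemma~\ref{lem:weyl-small-ball} is two-dimensional, so I reduce to a wedge. I will choose an orthogonal splitting $\RR^3=\Pi\oplus\RR u$ such that $\mathcal C\subset\mathcal D\times\RR$ for a planar wedge $\mathcal D$ with opening $\theta$. Since the $\Pi$- and $u$-components are independent, the probability is bounded by the Lemma~\ref{lem:weyl-small-ball} estimate $O(\varepsilon^{2(1+\pi/\theta)})$ times the probability $O(\varepsilon)$ that the one-dimensional $u$-coordinate ends in an $O(\varepsilon)$ window. The required bound $2+2\pi/\theta+1\ge17$ needs $\theta\le\pi/7$. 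This is where the anisotropy $15/17$ is essential: the squeeze along $\zeta$ makes the image of the chamber thin. The concrete choice I would try first is $u$ along an edge direction of $\mathcal C$ and $\Pi$ the plane containing $L\zeta$. I would then compute the opening of the projected wedge explicitly from the normals $L^{-T}(e_i-e_{i+1})$. If a single wedge projection is too lossy, I would instead split the time interval at $1/2$ and use the Markov property. Near both $t=0$ and $t=1$ the path is then within $O(\varepsilon)$ of the vertex, so each half contributes an exit-type factor. I would also use the sharper three-dimensional cone heat-kernel bound $p_{\mathcal C}(1,\varepsilon x_0,y)=O(\varepsilon^{2\alpha_1})$ from \cite{BS}, with $\alpha_1$ determined by the first Dirichlet eigenvalue of the spherical section of $\mathcal C$. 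This gives $O(\varepsilon^{3+2\alpha_1})$, and I would verify $\alpha_1\ge7$ for this squeezed cone. Verifying this exponent numerically and rigorously is the step I expect to be hardest. Once $\PP(F\le\varepsilon)=O(\varepsilon^{17})$ is established, comparing it with the lower bound $c\,\varepsilon^{16}$ from the matrix side gives the contradiction as $\varepsilon\downarrow0$.
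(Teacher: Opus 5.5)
Your matrix-side bound and your reduction of $\{F\le\varepsilon\}$ to a cone event for a whitened Brownian motion are correct and follow the paper. The gap is the exponent: that is the entire content of the theorem, and you leave it open.

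In the whitened coordinates built from $X_1+X_3$, $X_3-X_1$ and $X_1+2X_2+X_3$ (the paper's $W_\varepsilon$), the cone is $\mathcal C=\{z_1\ge 4|z_2|,\ 4z_3\ge z_1\}$, with edges along $(0,0,1)$ and $(4,\pm1,1)$. Your concrete splitting takes $u$ along the edge $(0,0,1)$ and $\Pi$ the plane containing $L\zeta$. It projects $\mathcal C$ onto the wedge $\{z_1\ge 4|z_2|\}$, of opening $\theta=\arccos(15/17)\approx 0.490$. The wedge estimate times an $O(\varepsilon)$ endpoint window for the $u$-coordinate then gives only the exponent $2+2\pi/\theta+1\approx 15.8$. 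That is below $16$, so it yields no contradiction at all.

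Your criterion $\theta\le\pi/7$ cannot be met by any splitting. The thinnest dihedral wedge containing $\mathcal C$, for instance $\{z_1\ge 4z_2\}\cap\{7z_1+32z_2+4z_3\ge 0\}$, has opening $\arccos(11/(3\sqrt{17}))\approx 0.475>\pi/7$. That optimal splitting would give an exponent of about $16.2$, which would actually suffice: you only need an exponent strictly larger than $16$, not $17$. But you neither find it nor aim at the right threshold. Your two fallbacks are only sketched, and the bound $\alpha_1\ge 7$ they depend on is exactly the step you call hardest and do not prove.

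The missing idea is that you discard the path constraint in the third direction and keep only its endpoint. Since $z_1\ge 0$ on $\mathcal C$, one has $\mathcal C\subset\{z_1\ge 4|z_2|\}\times\{z_3\ge 0\}$, a product of a planar wedge and a half-line.

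The third coordinate is $W_{\varepsilon,3}=16\varepsilon-Z$, where $Z=X_1+2X_2+X_3$ is a standard Brownian motion from $0$, independent of the first two coordinates. On $\{F\le\varepsilon\}$ it satisfies $\max_t Z(t)\in[0,16\varepsilon]$ and $Z(1)\in[-16\varepsilon,16\varepsilon]$. The reflection-principle joint density of $(\max Z, Z(1))$ is $O(\varepsilon)$ on a region of area $O(\varepsilon^2)$. So this coordinate contributes $O(\varepsilon^3)$, not $O(\varepsilon)$. Multiplying by the wedge bound $O(\varepsilon^{2(1+\pi/\theta)})$, with $\theta=\arccos(15/17)<\pi/6$, gives $O(\varepsilon^{17})$; this is the paper's argument.

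The same observation would also close your three-dimensional heat-kernel route. By domain monotonicity, the product cone forces the first positive harmonic function of $\mathcal C$ to have homogeneity at least $\pi/\theta+1>7.4$. Be careful with conventions there: if $\alpha_1=\sqrt{\lambda_1+1/4}$ is the Ba\~nuelos--Smits index in $\RR^3$, the killed density near the vertex is $O(\varepsilon^{2\alpha_1-1})$, not $O(\varepsilon^{2\alpha_1})$.
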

\begin{proof}
Since $\Sigma_* \one_4 = 0$, it follows that $\sum_{i=1}^4 B_i^{\Sigma_*} \equiv 0$ almost surely. For $1 \le i \le 4$, taking $t_k= 0 $ for $k< i$ and $t_k=1$ for $k \geq i$ shows that $F \geq B_i^{\Sigma_*}(1).$ Thus, $B_i^{\Sigma_*}(1) \le \varepsilon$ on $\{F \le \varepsilon\}$ for every $i$, and $B_i^{\Sigma_*}(1) \ge -3 \varepsilon$ because $\sum_{i=1}^4 B_i^{\Sigma_*}(1) =0$. 

Similarly, for $1 \le j \le 3$ and $0 \le s \le 1$, taking $t_k = 0$ for $k<j$, $t_j=s$, and $t_k=1$ for $k>j$ shows that $F \geq B_{j+1}^{\Sigma_*}(1)+X_j(s)$, and consequently $X_j(s) \leq 4 \varepsilon$ on $\{F \le \varepsilon \}$ for 
$1 \le j \le 3$ and $0 \le s \le 1$.  Therefore, $-4 \varepsilon \le X_j(1) \le 4 \varepsilon$ for $ 1 \le j \leq 3$. 
For $0 \le t \leq 1$, let
\begin{equation*}
W_{\varepsilon}(t) = 
\begin{pmatrix}
32 \varepsilon - 4 X_1(t) -4X_3(t) \\
-X_1(t) + X_3(t)\\
16 \varepsilon - X_1(t) - 2X_2 (t) -X_3(t)
\end{pmatrix}.
\end{equation*}
Using \eqref{eq:sigmastar},
\begin{equation*}
(\cov (X_i(s), X_j(t)))_{1 \leq i, j \leq 3} = \frac{s \wedge t}{64} \begin{pmatrix}
17 & -1 & -15 \\
-1 & 17 & -1 \\
-15 & -1 & 17
\end{pmatrix},
\end{equation*}
so $\cov(W_{\varepsilon}(s), W_{\varepsilon}(t)) = (s \wedge t)I_3$ and thus $W_{\varepsilon}$ is a Brownian motion in $\RR^3$ with identity covariance matrix, started at $\varepsilon (32, 0, 16)^T$.  On $\{F \le \varepsilon \}$,
\begin{align*}
W_{\varepsilon, 1}(s) &= 4(4\varepsilon - X_1(s)) + 4(4 \varepsilon -X_3(s))\\
& \geq 4 \abs{(4 \varepsilon - X_1(s)) - (4 \varepsilon -X_3(s))} \\
&= 4 \abs{W_{\varepsilon, 2}(s)},
\end{align*}
and $W_{\varepsilon, 3}(s) = (4 \varepsilon - X_1(s)) + 2(4 \varepsilon -X_2(s)) + (4 \varepsilon -X_3(s)) \ge 0$. 
So on $\{F \le \varepsilon\}$, $W_{\varepsilon}(s)$ stays in the cone $\{z \in \RR^3: z_1 \ge 4 \abs{z_2}, z_3 \ge 0\}$ for $0 \le s \le 1$, and  $W_{\varepsilon} (1) \in \varepsilon([0,64] \times [-8,8] \times [0,32])$.

The wedge in this cone has opening $\theta=\arccos(15/17)<\pi/6$. 
Hence $2\left(1+\frac{\pi}{\theta}\right)>14$, 
and Lemma~\ref{lem:weyl-small-ball} therefore gives, in particular, an $O(\varepsilon^{14})$ bound for the first two coordinates. Next, observe that $16\varepsilon-W_{\varepsilon,3}(t)=X_1(t)+2X_2(t)+X_3(t)$. 
On $\{F\leq\varepsilon\}$, the maximum of the right-hand side lies in
$[0,16\varepsilon]$. Moreover, the endpoint bounds
$-4\varepsilon\leq X_j(1)\leq4\varepsilon$, $1\le j\le 3$, 
imply that
$$
16\varepsilon-W_{\varepsilon,3}(1) = X_1(1)+2X_2(1)+X_3(1)
\in[-16\varepsilon,16\varepsilon].
$$
Set $Z(t):=16\varepsilon-W_{\varepsilon,3}(t)$.  Then $Z$ is a standard one-dimensional Brownian motion started at $0$.
By~\cite[equation~(1.9)]{Shepp}, the joint density of $\left(\max_{0\le t\le1}Z(t),Z(1)\right)$ is $$p(y,x) = \frac{2(2y-x)}{\sqrt{2\pi}}\exp\!\left(-\frac{(2y-x)^2}{2}\right),$$ 
$y>0,\ x<y$.
On $0\le y\le 16\varepsilon, -16\varepsilon\le x\le y$, this density is $O(\varepsilon)$, while the region has area $O(\varepsilon^2)$. Hence the corresponding probability is $O(\varepsilon^3)$ for the third coordinate.  
Since $W_\varepsilon$ has independent standard Brownian coordinate increments, the process $(W_{\varepsilon,1},W_{\varepsilon,2})$ is independent of
the coordinate process $W_{\varepsilon,3}$. Multiplying the two bounds therefore yields
$\mathbb P(F\le\varepsilon)=O(\varepsilon^{17})$.  

Suppose $\lambda_{\textrm{max}}(H)$ has the same law as $F$ for some centered jointly Gaussian Hermitian random matrix $H$ in ${\cal H}_4$. If $H=0$ almost surely, then $\PP(\lambda_{\textrm{max}}(H) \le \varepsilon) = 1$ for every $\varepsilon >0$, which is impossible since $\PP(F \le \varepsilon ) = O(\varepsilon^{17})$.  Otherwise, let $V$ be the smallest linear subspace of $\mathcal H_4$ containing $H$ almost surely, and set $r=\dim_{\mathbb R}V$.
The law of $H$, regarded as a Gaussian random vector on $V$, has a continuous density which is strictly positive at the origin. Since
$r\le \dim_{\mathbb R}\mathcal H_4=16$,
there exists $c>0$ such that, for all sufficiently small $\varepsilon>0$,
$$c\varepsilon^r
\le
\mathbb P\left(\|H\|_{HS}\leq\varepsilon\right)
\le
\mathbb P\left(\lambda_{\max}(H)\leq\varepsilon\right).$$
Since $r\le 16$, after decreasing $c$ if necessary,
$c\varepsilon^{16}
\le
\mathbb P\left(\lambda_{\max}(H)\le\varepsilon\right)
=
\mathbb P(F\le\varepsilon),$ 
for all sufficiently small $\varepsilon$. This contradicts
$\mathbb P(F\le\varepsilon)=O(\varepsilon^{17})$.
\end{proof}

A similar small-deviation argument would yield counterexamples in higher dimensions but, as shown next, it also provides counterexamples for three-state Markov chains. 

Let $\Sigma$ be a $3 \times 3$ covariance matrix such that $\Sigma \one = 0$ and let $B^\Sigma, X_j,$ and $F$ be defined as before except with $3$ instead of $4$ (see the definitions following equation \eqref{eq:sigmastar}). Let $C = (\cov (X_i(1), X_j(1)))_{1 \leq i, j \leq 2}$ and let $\rho = C_{12}/\sqrt{C_{11}C_{22}}$ when $C_{11}C_{22} > 0$.
\begin{thm}\label{thm:3counterexample}
If $-1 < \rho < -1/2$, there is no centered jointly Gaussian Hermitian random matrix $H \in {\cal H}_3$ such that $\lambda_{\max}(H)$ has the same law as $F$.
\end{thm}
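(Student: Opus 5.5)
We follow the four-state argument of Theorem~\ref{thm:4counterexample}, now with $M=3$. The target is an upper bound $\PP(F\le\varepsilon)=O(\varepsilon^{\kappa})$ with $\kappa>9=\dim_{\RR}\mathcal H_3$. The Gaussian small-ball lower bound $c\varepsilon^{r}$, $r\le 9$, from the end of that proof then gives the contradiction. Degenerate $H$ (for instance $H=0$) is excluded exactly as before.

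\textbf{Reduction to a wedge event.} Since $\Sigma\one=0$ we have $B_1^\Sigma+B_2^\Sigma+B_3^\Sigma\equiv0$. Here $F=B_3^\Sigma(1)+\max_{0\le t_1\le t_2\le1}(X_1(t_1)+X_2(t_2))$. Choosing times as in the four-state proof, the event $\{F\le\varepsilon\}$ gives $B_i^\Sigma(1)\le\varepsilon$ for all $i$, hence $B_i^\Sigma(1)\ge-2\varepsilon$. It also gives $X_j(s)\le 3\varepsilon$ for $j=1,2$ and all $s\in[0,1]$, and therefore $|X_j(1)|\le 3\varepsilon$. So on $\{F\le\varepsilon\}$ the planar process $(3\varepsilon-X_1,\,3\varepsilon-X_2)$ stays in the closed quadrant and ends in $\varepsilon[0,6]^2$.

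Note $C$ is nondegenerate because $|\rho|<1$. Whiten by a linear map $L$ with $LCL^T=I_2$, for example $L=C^{-1/2}$. Then $W_\varepsilon(t):=L\bigl(3\varepsilon\one_2-(X_1(t),X_2(t))^T\bigr)$ is a standard planar Brownian motion started at $\varepsilon x_0$ with $x_0=3L\one_2$. The quadrant maps to a wedge $\mathcal D$, and $x_0$ lies in its interior. The endpoint lies in $\varepsilon E$ with $E=L[0,6]^2$ bounded. Lemma~\ref{lem:weyl-small-ball} then gives $\PP(F\le\varepsilon)=O(\varepsilon^{2(1+\pi/\theta)})$.

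\textbf{Computing the opening angle.} The standard fact is that the image of the positive quadrant under whitening by a covariance with correlation $\rho$ has opening $\theta=\arccos(-\rho)$. The walls are the level sets $\{X_1=\text{const}\}$ and $\{X_2=\text{const}\}$. Their normals in whitened coordinates have inner product $\rho$, so the angle between the inward normals is $\arccos\rho$, and the opening is $\pi-\arccos\rho=\arccos(-\rho)$. I would verify this against the four-state case, where the same recipe produced $\arccos(15/17)$ from correlation $-15/17$.

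For $-1<\rho<-1/2$ we get $-\rho\in(1/2,1)$, so $\theta\in(0,\pi/3)$ and $\pi/\theta>3$. Hence $2(1+\pi/\theta)>8$. This is not yet enough: the requirement is an exponent strictly greater than $9$, and $\theta<\pi/3$ alone gives only $>8$.

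\textbf{Closing the gap.} This is the main obstacle. Two routes are available.

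One route uses the fact that the three-state functional is determined by the two-dimensional process $(X_1,X_2)$. By the Gaussian small-ball argument, $\lambda_{\max}(H)$ with $H$ centered Gaussian actually satisfies $\PP(\lambda_{\max}(H)\le\varepsilon)\ge c\,\varepsilon^{r}$. One then tries to sharpen the count of relevant dimensions using the trace: $\PP(\lambda_{\max}(H)\le\varepsilon)\ge\PP(\|H\|\le\varepsilon)$ has exponent $r$, but exactness is lost for large $r$.

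The more promising route is to reverse the comparison. Since $\lambda_{\max}(H)\le\varepsilon$ forces $\Tr H\le3\varepsilon$, I would instead match the exponent $8$. First show that $\PP(\lambda_{\max}(H)\le\varepsilon)$ is bounded below by $c\,\varepsilon^{r}$ with $r\le 8$. This should be done by exhibiting that $F$, hence $\lambda_{\max}(H)$, has a specific structure; for example, $\EE F$ and the law of $-F$ force some constraint. Alternatively one can use $\PP(\lambda_{\max}(H)\le\varepsilon)\ge\PP(H\in -\text{psd cone}+\varepsilon\text{-ball})$, which is of order $\varepsilon^{r'}$ with $r'$ less than $r$ whenever $V$ meets the interior of the negative semidefinite cone. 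If $V$ does not meet the negative definite interior, it must be handled separately.

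I would first attempt this cone-lower-bound refinement: show that either $V$ intersects the open negative definite cone, giving a lower bound $\varepsilon^{\le 9-?}$, or else $\lambda_{\max}(H)\ge0$ almost surely in a way that is incompatible with $F$ having a positive density near $0$. The exponent bookkeeping in this step is where I expect the real difficulty.
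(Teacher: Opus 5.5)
Your reduction to the wedge event, the whitening, and the computation $\theta=\arccos(-\rho)$ are correct and match the paper. They give $\PP(F\le\varepsilon)=O(\varepsilon^{\alpha})$ with $\alpha=2(1+\pi/\theta)>8$.

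The gap is the final step: you never get a Gaussian lower bound of order $\varepsilon^{8}$. Your target $\kappa>9$ cannot be reached from the wedge estimate, since $\alpha\downarrow 8$ as $\rho\uparrow-1/2$.

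The missing observation uses facts you already derived. Because $B_1^\Sigma+B_2^\Sigma+B_3^\Sigma\equiv 0$, one has $F\ge\max_i B_i^\Sigma(1)\ge 0$ almost surely, so $\lambda_{\max}(H)\ge 0$ almost surely. Suppose the linear support $V$ of $H$ were all of $\mathcal H_3$. Then the law of $H$ would have a strictly positive density on $\mathcal H_3$ and would charge the nonempty open set of negative definite matrices, where $\lambda_{\max}<0$. Hence $V$ is a proper subspace and $r=\dim_{\RR}V\le 8$. For small $\varepsilon$ this gives
\[
\PP(F\le\varepsilon)=\PP(\lambda_{\max}(H)\le\varepsilon)\ge\PP(\|H\|_{HS}\le\varepsilon)\ge c\,\varepsilon^{r}\ge c\,\varepsilon^{8},
\]
which contradicts $O(\varepsilon^{\alpha})$ with $\alpha>8$.

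Your proposed cone dichotomy is close to this, but you misread what each branch yields.

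\begin{itemize}
\item If $V$ meets the open negative definite cone, there is no refined exponent to compute. You get $\PP(\lambda_{\max}(H)<0)>0$ outright, which is impossible because $F\ge 0$, or simply because $\PP(F\le\varepsilon)\to 0$.
\item If $V$ does not meet that cone, nothing contradictory happens: $\lambda_{\max}(H)\ge 0$ almost surely is exactly what $F\ge0$ requires. So no contradiction can come from the density of $F$ near $0$. What this branch gives is $V\neq\mathcal H_3$, hence $r\le 8$, which is the exponent you need.
\end{itemize}

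The trace remark ($\lambda_{\max}(H)\le\varepsilon$ forces $\Tr H\le 3\varepsilon$) and your first route are not needed and do not lead to the bound.
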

\begin{proof}
Repeating the first two paragraphs of the proof of Theorem~\ref{thm:4counterexample} with $4$ replaced by $3$ and $\Sigma_*$  by $\Sigma$, we obtain
$F\geq\max_{1\le i\le 3}B_i^\Sigma(1)\ge 0$.
Moreover, on $\{F\le \varepsilon\}$,
$$ X_j(1)\ge -3\varepsilon
\qquad\text{and}\qquad
X_j(s)\le 3\varepsilon,
\quad 1\le j\le 2,\quad 0\le s\le 1.$$
Under the hypothesis $-1<\rho<-1/2$, $C$ is positive definite.  Therefore,
\begin{equation*}
W_\varepsilon(t) = C^{-1/2}
\left(
3\varepsilon\one_2-
\begin{pmatrix}
X_1(t)\\
X_2(t)
\end{pmatrix}
\right)
\end{equation*}
is a Brownian motion in $\mathbb R^2$ with identity covariance matrix, started at $3\varepsilon C^{-1/2}\one_2$. On $\{F\leq\varepsilon\}$,
$W_\varepsilon(1)\in\varepsilon C^{-1/2}[0,6]^2$,
and $W_\varepsilon$ remains in the closed wedge $\overline{C^{-1/2}[0,\infty)^2}$.  By the open–closed equivalence established after Lemma~\ref{lem:weyl-small-ball}, we may apply that lemma to its interior.
The opening of this wedge is $\theta=\arccos(-\rho)$.
Since $-1<\rho<-1/2$, we have $\theta<{\pi/3}$,
and therefore $$\alpha:=
2\left(1+\frac{\pi}{\theta}\right)=
2\left(
1+\frac{\pi}{\arccos(-\rho)}
\right) > 8.$$
Lemma~\ref{lem:weyl-small-ball} thus yields $\mathbb P(F\le\varepsilon)
= O(\varepsilon^\alpha)$, $\alpha>8$.

\noindent
Now suppose that $\lambda_{\max}(H)\stackrel{d}{=}F$ for some $3\times 3$ centered jointly Gaussian Hermitian random matrix $H$.
Since $F\ge 0$ almost surely, the linear support of $H$ cannot be all of
$\mathcal H_3$. Indeed, a nondegenerate centered Gaussian measure on
$\mathcal H_3$ has a strictly positive density everywhere and hence assigns positive probability to the nonempty open set of negative definite matrices, on which $\lambda_{\max}(H)<0$.
Thus, if $V$ denotes the linear support of $H$, $r:=\dim_{\mathbb R}V\le 9-1=8$.
As in the proof of Theorem~\ref{thm:4counterexample}, the Gaussian density on $V$ is positive at the origin, so there exists $c>0$ such that, for all sufficiently small $\varepsilon>0$,
$$ c\varepsilon^r\le \mathbb P\left(\|H\|_{HS}\le \varepsilon\right) 
\le
\mathbb P\left(\lambda_{\max}(H)\le \varepsilon\right).$$
Since $r\le 8$, this gives
$c\varepsilon^8
\le
\mathbb P(F\le\varepsilon)$ 
for all sufficiently small $\varepsilon$, contradicting
$\mathbb P(F\le \varepsilon)=O(\varepsilon^\alpha)$, $\alpha>8$.
\end{proof}

The following provides a concrete counterexample illustrating the previous result.  

\begin{example}
Consider the Markov chain on $1< 2< 3$ started from its uniform stationary distribution, with transition matrix
\begin{equation*}
\mathsf P = \frac{1}{6}
\begin{pmatrix}
1 & 1& 4 \\
1 & 4 & 1 \\
4 & 1 & 1
\end{pmatrix}.
\end{equation*}
$\mathsf P$ is symmetric and doubly stochastic with positive entries and eigenvalues ${1}/{2}$ and $-{1}/{2}$ on $\one^\perp$. 
Stationarity and symmetry give
\begin{equation*}
C = \frac{1}{9}
\begin{pmatrix}
14 & -13\\
-13 & 14
\end{pmatrix}
\end{equation*}
and hence $\rho = -13/14$. By Theorem~\ref{thm:3counterexample}, there is no centered jointly Gaussian Hermitian random matrix $H \in {\cal H}_3$ whose largest eigenvalue has the same law as $F$.
\end{example}

\begin{rem}\label{3cyclic}
For an irreducible, aperiodic, homogeneous cyclic chain on three states, it is shown in \cite[equation (3.12)]{HL2} that $F$ is, up to a scaling factor, the largest eigenvalue of a $3 \times 3$ traceless GUE random matrix. 
It is useful to examine this in light of Theorem~\ref{thm:3counterexample}.
The asymptotic covariance matrix $\Sigma$ is symmetric and cyclic, so since $\Sigma \one = 0$, it follows that $\Sigma = c(I_3 -\frac{1}{3} \one_3 \one_3^T)$ for some $c>0$, and so $C_{11} = C_{22}=-2C_{12}$ and $\rho = -1/2$. Therefore, the wedge in Theorem~\ref{thm:3counterexample} has opening $\pi/3$.  By Lemma~\ref{lem:weyl-small-ball},  $\PP (F \leq \varepsilon) = O(\varepsilon^8)$. 
Since the traceless subspace of ${\cal H}_3$ has real dimension $8$, the small-deviation exponent now matches the dimension of the Gaussian support, and there is no contradiction.
\end{rem}

\section{Extensions to Non-Uniform Stationary Distributions}

The preceding applications to Markov random words focused primarily on
the longest increasing subsequence and on chains with uniform stationary
distribution.  We now extend the random-matrix representation to the joint
fluctuations of all row lengths of the RSK Young 
diagram for a general stationary
distribution.

Grouping the letters according to their common stationary
probabilities, we first treat the top row and then show that, under suitable
blockwise isotropy and orthogonality conditions on the asymptotic covariance
matrix, the limiting row fluctuations are jointly represented by the
eigenvalues of a corresponding family of Gaussian Hermitian matrices.  
These results are then used to extend these limiting laws to the infinite countable alphabet case through a sandwiching procedure.

Let $(X_n)_{n\ge 0}$ be an irreducible, aperiodic, homogeneous Markov chain on the totally ordered alphabet $1 < \cdots< M$, started from its stationary distribution $\pi = (\pi_1, \ldots, \pi_M)^T$, that is $\mathbb P(X_0=k) =\pi_k, k=1, 2, \dots, M$, and with asymptotic covariance matrix $\Sigma$. Let 
$p_1> \cdots > p_K$  be the distinct values taken by the
stationary probabilities $\pi_1,\ldots,\pi_M$. For $1 \leq a \leq K$, let ${\cal A}_a= \{i : \pi_i = p_a \}$, $d_a = \abs{{\cal A}_a}$, and $m_a = \sum_{b< a} d_b$, where we adopt the convention that the empty sum is zero, 
and list the elements of each ${\cal A}_a$ in the order inherited from the
alphabet.  Next, let $\Sigma_{ab}$ denote the ${\cal A}_a\times {\cal A}_b$  submatrix of $\Sigma$ (thus of size $d_a\times d_b$), and let
\begin{equation*}
Q_a = I_{d_a} - \frac{1}{d_a} \one_{d_a} \one_{d_a}^T.
\end{equation*}
For $1\le a\leq K$, let $H_a$ be a $d_a\times d_a$ Hermitian
random matrix. Assume that their concatenated diagonal vector satisfies
$\left(\mathbf d(H_1)^T,\ldots,\mathbf d(H_K)^T\right)^T
 \sim \mathcal N(0,(\Sigma_{ab})_{a,b=1}^K)$. 
Independently of this diagonal vector, suppose that all the
upper-triangular off-diagonal entries of the matrices $H_1,\ldots,H_K$ are mutually
independent and that $(H_a)_{ij}\sim\mathcal N(0,c_a)_{\mathbb C}$, $1\le i<j\le d_a$, 
where $c_a\ge 0$.

Let $R_N^j$ denote the $j$th row length of the Young diagram obtained
by applying the RSK correspondence to the Markov word, and recall that
$R_N^1=LI_N$. (We set $R_N^j=0$ when the diagram has fewer than $j$ rows.)

Writing ${\cal A}_a=\{i_{a,1}<\cdots<i_{a,d_a}\}$,
set $B^{\Sigma,{\cal A}_a} := \left(B^\Sigma_{i_{a,1}},\dots,B^\Sigma_{i_{a,d_a}}\right)^T$.  
For the first block, define
$$F_1 :=\max_{0=t_0\leq\cdots\leq t_{d_1}=1}\sum_{j=1}^{d_1}\left(B^\Sigma_{i_{1,j}}(t_j)-B^\Sigma_{i_{1,j}}(t_{j-1})\right).$$

We begin with the top row, whose limiting law depends only on the covariance block corresponding to the largest stationary probability. 

\begin{lemma}\label{lem:smallresult}
Let $Q_1 \Sigma_{11} Q_1 = c_1 Q_1$. Then, as $N \to \infty$,
\begin{equation*}
\frac{R_N^1 - Np_1}{\sqrt{N}} \Longrightarrow \lambda_{\textrm{max}} (H_1).
\end{equation*}
\end{lemma}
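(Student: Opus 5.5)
The proof has two steps: import the Markovian limit for the top row, then identify that limit through Theorem~\ref{thm:qsigmaq} applied to the first block.

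\textbf{Step 1: the Markovian limit.} We invoke the general limit theorem of \cite{HL2} for non-uniform stationary distributions. After centering by $Np_1$ and scaling by $\sqrt N$, the top row $R_N^1=LI_N$ of the RSK diagram converges in law to the last-passage functional $F_1$. This functional is built only from the letters of ${\cal A}_1$, the letters of maximal stationary probability. The mechanism is that a weakly increasing subsequence must spend asymptotically all its length on letters of ${\cal A}_1$. Any macroscopic use of a letter with $\pi_i<p_1$ costs order $N$, while the fluctuations are only order $\sqrt N$. Hence the Brownian maximization localizes on the sub-Brownian motion $B^{\Sigma,{\cal A}_1}$, whose covariance matrix is $\Sigma_{11}$. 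If this localization is not stated in exactly this form in \cite{HL2}, we will re-derive it along the lines of the penalty argument in the proof of Theorem~\ref{thm:mainclassification}. There, large negative drifts $-r$ on the excluded coordinates force the corresponding time increments to vanish. Here the drift is of order $\sqrt N(\pi_i-p_1)\to-\infty$ on each excluded coordinate.

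\textbf{Step 2: identifying $F_1$ with the block matrix.} It remains to show $F_1\stackrel{d}{=}\lambda_{\max}(H_1)$. This follows from Theorem~\ref{thm:qsigmaq} applied in dimension $d_1$ with covariance $\Sigma_{11}$ and $c=c_1$; the hypothesis $Q_1\Sigma_{11}Q_1=c_1Q_1$ is exactly what that theorem requires.
\begin{itemize}
\item \emph{Case $d_1\ge 2$.} Theorem~\ref{thm:qsigmaq} gives
$$F_1\stackrel{d}{=}\lambda_{\max}\Bigl(\sqrt{c_1}\bigl(H_0-\tfrac{\Tr H_0}{d_1}I\bigr)+ZI\Bigr),$$
with $H_0\in{\cal M}_I$ of size $d_1$. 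Here $Z$ is jointly Gaussian with $Q_1\mathbf d(H_0)$ and independent of the off-diagonal entries of $H_0$. Call the matrix on the right $\widetilde H$.
\item \emph{Case $d_1=1$.} Here $F_1=B^\Sigma_{i_{1,1}}(1)\sim N(0,\Sigma_{11})$, which is $\lambda_{\max}(H_1)=\mathbf d(H_1)$. Nothing more is needed.
\end{itemize}

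\textbf{Step 3: $\widetilde H$ has the law of $H_1$.} Write $H_1$ as its diagonal plus its off-diagonal part.
\begin{itemize}
\item The off-diagonal entries of $\widetilde H$ are $\sqrt{c_1}(H_0)_{ij}\sim N(0,c_1)_{\CC}$. They are independent of one another and of the diagonal of $\widetilde H$, since that diagonal is a function of $\mathbf d(H_0)$ and $\xi$ only.
\item The diagonal of $\widetilde H$ is $\sqrt{c_1}Q_1\mathbf d(H_0)+Z\one$. By the construction in the proof of Theorem~\ref{thm:qsigmaq}, this vector has covariance $\Sigma_{11}$.
\end{itemize}
Hence $\widetilde H\stackrel{d}{=}H_1$, and so $F_1\stackrel{d}{=}\lambda_{\max}(H_1)$. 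The theorem is stated for $c>0$. When $c_1=0$, its last paragraph gives $F_1=Z$ with $\Sigma_{11}=a\one\one^T$. Then $H_1=ZI$ almost surely, because the off-diagonal variances vanish, and the identity holds again.

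\textbf{Main obstacle.} The main obstacle is Step~1: making sure the cited Markovian limit is available in the localized form, for a general stationary distribution. The matrix identification in Steps~2 and~3 is a direct consequence of Theorem~\ref{thm:qsigmaq}.
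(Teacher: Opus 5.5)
Your proposal is correct and follows essentially the same route as the paper. The top-row case of \cite[Theorem 3.2]{HL2} gives convergence to $F_1$, and the final assertion of Theorem~\ref{thm:qsigmaq} applied to $\Sigma_{11}$ produces $\widetilde H$ with diagonal $\mathcal N(0,\Sigma_{11})$ independent of iid $N(0,c_1)_{\CC}$ off-diagonal entries, whence $\widetilde H\stackrel{d}{=}H_1$; the case $d_1=1$ is handled directly, as in the paper. One small inaccuracy: Theorem~\ref{thm:qsigmaq} allows $c\ge 0$, not only $c>0$, so your separate treatment of $c_1=0$ is just its degenerate case.
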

\begin{proof}
If $d_1 = 1$, then the result follows immediately from the top row case of \cite[Theorem 3.2]{HL2}.

Let $d_1 \geq 2$ and $H_0 \in \mathcal M_{I_{d_1}}$. Then, choose $Z$ as in the final assertion of Theorem~\ref{thm:qsigmaq} applied to $\Sigma_{11}$, and set
\begin{equation*}
\tilde{H_1} = \sqrt{c_1} \left ( H_0 - \frac{\Tr(H_0)}{d_1}I_{d_1} \right ) + ZI_{d_1}.  
\end{equation*}

By the top-row case of \cite[Theorem 3.2]{HL2},
$\frac{R_N^1-Np_1}{\sqrt N}\Longrightarrow F_1$.  
It therefore remains to identify the law of $F_1$.  First, 
\begin{equation*}
(B^{\Sigma, {\cal A}_1}(1), F_1) \stackrel{d}{=} \left ( \mathbf d({\tilde H_1}), \lambda_{\textrm{max}} (\tilde H_1) \right ),
\end{equation*}
and in particular,
\begin{equation*}
\mathbf d (\tilde H_1) = \sqrt{c_1} Q_1 \mathbf d (H_0) + Z \one_{d_1} \sim \mathcal N(0, \Sigma_{11}).
\end{equation*}
The matrices $\widetilde H_1$ and $H_1$ have the same diagonal law,
and their off-diagonal entries are independent centered complex Gaussian
random variables with variance $c_1$, independent of their diagonals.
Therefore, $\widetilde H_1\overset d=H_1$, and $F_1\overset d=\lambda_{\max}(H_1)$. \end{proof}

We now extend the top-row identification to the joint fluctuations of all rows, under blockwise isotropy and independence of the projected Brownian bridges across blocks.  This establishes the corresponding direct-sum result for the full Young diagram.

\begin{thm}\label{thm:directsumversion}
Let the covariance matrix $\Sigma$ be such that 
$$
 Q_a\Sigma_{aa}Q_a=c_aQ_a,\qquad c_a\ge 0,
 \qquad 1\le a\le K,
$$
and
$$
 Q_a\Sigma_{ab}Q_b=0,
 \qquad 1\le a<b\le K.
$$
Let $\mu_1(H_a) \ge \cdots \ge \mu_{d_a}(H_a)$ be the ordered eigenvalues of $H_a$. Then, as $N \to \infty$,
\begin{equation*}
\bigoplus_{a=1}^K \left ( \frac{R_N^{m_a +r} - Np_a}{\sqrt{N}} \right )_{r=1}^{d_a} {\Longrightarrow} \ \bigoplus_{a=1}^K \left (\mu_r(H_a) \right )_{r=1}^{d_a}.
\end{equation*}
\end{thm}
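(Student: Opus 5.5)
We will rely on the full-shape version of \cite[Theorem 3.2]{HL2}: for the non-uniform stationary case, the rescaled rows $\bigl((R_N^{m_a+r}-Np_a)/\sqrt N\bigr)_{a,r}$ converge jointly to $\bigoplus_a (\Lambda^{(a)}_r)_{r=1}^{d_a}$. Here $\Lambda^{(a)}$ is the vector of RSK-type Brownian functionals, i.e.\ the Greene--Kleitman type maxima over $r$ disjoint chains, built from the block process $B^{\Sigma,\mathcal A_a}$ alone. The remaining work is to identify the joint law of these block functionals with the joint law of the spectra of independent-off-diagonal matrices $H_a$ having jointly Gaussian diagonals with covariance $(\Sigma_{ab})$. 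We take this theorem as the starting point, exactly as the proof of Lemma~\ref{lem:smallresult} did for the top row.

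The first step is a decomposition of each block process. We write
\[
B^{\Sigma,\mathcal A_a}(t)=Q_aB^{\Sigma,\mathcal A_a}(t)+\tfrac1{d_a}\bigl(\one_{d_a}^TB^{\Sigma,\mathcal A_a}(t)\bigr)\one_{d_a},
\]
and split further $Q_aB^{\Sigma,\mathcal A_a}(t)=tQ_aB^{\Sigma,\mathcal A_a}(1)+\beta^{(a)}(t)$ into endpoint and bridge. The common scalar part adds the same amount to every chain increment, so it shifts each $\Lambda^{(a)}_r$ by $\one^TB^{\Sigma,\mathcal A_a}(1)/d_a$, exactly as the telescoping step in Theorem~\ref{thm:qsigmaq}. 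The whole vector $\Lambda^{(a)}$ is therefore a measurable function $G_a$ of $\beta^{(a)}$ and of the endpoint $B^{\Sigma,\mathcal A_a}(1)$.

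The key covariance computation comes next. The bridges $\beta^{(a)}$ are jointly Gaussian, independent of the joint endpoint vector $B^\Sigma(1)$, and satisfy
\[
\cov\bigl(\beta^{(a)}(s),\beta^{(b)}(t)\bigr)=(s\wedge t-st)\,Q_a\Sigma_{ab}Q_b .
\]
The hypotheses make this equal to $c_a(s\wedge t-st)Q_a\,\delta_{ab}$. Hence the bridges are mutually independent across blocks, independent of all endpoints, and each is distributed as $\sqrt{c_a}$ times a standard projected bridge. Conditional on $B^\Sigma(1)=x$, the blocks $\Lambda^{(a)}$ are therefore independent with laws depending only on $x_{\mathcal A_a}$.

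For each fixed block endpoint $y\in\RR^{d_a}$ we then need the full-spectrum analogue of Corollary~\ref{cor:conditional}: the conditional law of $\Lambda^{(a)}$ equals the law of the ordered spectrum of $\diag(y)+H^{\circ}_a$, where $H^\circ_a$ has zero diagonal and iid $N(0,c_a)_{\CC}$ off-diagonal entries. This is the step I expect to be the main obstacle, since Corollary~\ref{cor:conditional} was stated only for $\lambda_{\max}$. The proof runs as in that corollary. By \cite[equation (3.2)]{BGH} with all $k_i$ (or equivalently Theorem~\ref{thm:lemma21gen} together with RSK for geometric arrays), the joint law of $(B(1),\Lambda)$ for standard Brownian motion equals that of $(\mathbf d(H_0),\mu(H_0))$, with $\mu(H_0)$ the ordered spectrum of $H_0\in\mathcal M_I$. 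Projecting and scaling by $\sqrt{c_a}$ as in Theorem~\ref{thm:qsigmaq} gives the statement conditionally for almost every $y$. To pass to every $y$ we use continuity in $y$: $\Lambda_r$ is $r$-fold Lipschitz in $\|y\|_\infty$, and Weyl's inequality gives the same for the eigenvalues. The case $c_a=0$ is trivial, since then the spectrum is $y$ sorted and the functionals reduce to order statistics of $y$. With $d_a=1$ there is nothing to prove.

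Finally we integrate the conditional identity against the law $\mathcal N(0,\Sigma)$ of the endpoint, taking the off-diagonal parts $H^\circ_a$ independent across $a$ and independent of the diagonal. This produces exactly the family $H_a=\diag(\mathbf d(H_a))+H^\circ_a$ with concatenated diagonal $\sim\mathcal N(0,(\Sigma_{ab}))$ described before Lemma~\ref{lem:smallresult}. It yields $\bigoplus_a\Lambda^{(a)}\overset d=\bigoplus_a\mu(H_a)$, and combining this with the weak convergence from \cite[Theorem 3.2]{HL2} concludes the proof.
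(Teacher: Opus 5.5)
Your proposal is correct and follows essentially the paper's argument. The shared steps are the endpoint/bridge split with covariance $(s\wedge t-st)Q_a\Sigma_{ab}Q_b$ giving independence across blocks, the telescoping of the common scalar part, the $\sqrt{c_a}$-scaled full minor-array identity of \cite[Theorem 2.1]{BGH} conditioned on the endpoint and extended to every endpoint by Lipschitz continuity, and the trivial case $c_a=0$. The only real difference is that you start from row-level limits built from block $a$ alone, whereas the paper starts from the partial sums of \cite[Theorem 3.2]{HL2}, whose limits carry the earlier-block terms $\sum_{b<a}\one^TB^{\Sigma,\mathcal A_b}(1)$, and then takes consecutive differences using $F_{a,d_a}^\Sigma=\one^TB^{\Sigma,\mathcal A_a}(1)$.
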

\begin{proof}
Observe that $B^\Sigma(t)- tB^\Sigma(1)$ is independent of $B^\Sigma(1)$, and 
for $0\le s,t\le 1$,
\begin{eqnarray*}
&\operatorname{Cov}\left(
 Q_a\left(B^{\Sigma,{\cal A}_a}(s)-sB^{\Sigma,{\cal A}_a}(1)\right),
 Q_b\left(B^{\Sigma,{\cal A}_b}(t)-tB^{\Sigma, {\cal A}_b}(1)\right)
 \right) \\
&
=(s\wedge t-st)Q_a\Sigma_{ab}Q_b.
\end{eqnarray*}
Since the projected bridges are jointly Gaussian, the condition
$Q_a\Sigma_{ab}Q_b=0$ makes the bridges belonging to distinct blocks
independent, and within each block the covariance is $(s \wedge t -st) c_a Q_a$. Adding the same scalar bridge to every coordinate does not change any of
the path functionals, since its contribution along each path from $0$ to
$1$ telescopes to zero.  So the $r$-path functional in \cite[equation (3.8)]{HL2} depends on the bridge only through $Q_a\left (B^{\Sigma, {\cal A}_a}(t) -tB^{\Sigma, {\cal A}_a}(1) \right)$.  Thus, for $c_a>0$, conditionally on $B^{\Sigma,{\cal A}_a}(1)$, the block-$a$ path functionals have the same joint law as the corresponding functionals of a Brownian motion with covariance matrix $c_aI_{d_a}$, conditioned to have the same endpoint.

If $c_a=0$, then
$Q_aB^{\Sigma,{\cal A}_a}(t)=0$ almost surely for every $t$, so all the coordinates
of $B^{\Sigma,{\cal A}_a}$ are identical, and the off-diagonal entries of $H_a$ are zero. Then, every path sum telescopes to $(B^{\Sigma, {\cal A}_a}(1))_1$, so the $r$-path functional is $r(B^{\Sigma, {\cal A}_a}(1))_1$. Since the projected bridges and off-diagonal entries are independent across distinct blocks, these conditional identities hold across all blocks given $B^{\Sigma}(1)$. 
    
For $c_a>0$, first consider the block identity when $\Sigma_{aa}=c_aI_{d_a}$; scaling the identity in \cite[Theorem 2.1]{BGH} by $\sqrt{c_a}$ identifies the full array of path functionals with the partial sums $(\sum_{i=1}^r \mu_i(H_a^{(k)}) )_{1 \leq r \leq k \leq d_a}$ of the eigenvalues of $H_a^{(k)}$, where $H_a^{(k)}$ denotes the $k\times k$ leading principal minor of $H_a$. 
Since
$\operatorname{Tr}\left(H_a^{(k)}\right)$ corresponds under this joint array identity to $\sum_{i=1}^k (B^{\Sigma,{\cal A}_a}(1))_i$, 
taking consecutive differences in $k$ recovers both
$\mathbf d(H_a)$ and $B^{\Sigma,{\cal A}_a}(1)$, as in Lemma~\ref{lem:summationidentity}.  Conditioning on the diagonal vector, or equivalently on the endpoint, gives the identity for almost every endpoint. Since the endpoint in the GUE minor identity has positive density on $\RR^{d_a}$, and the path functionals and partial eigenvalue sums are Lipschitz in the endpoint, the identity holds for every endpoint. 
Conditionally on $B^{\Sigma,{\cal A}_a}(1)$, the projected bridge
$Q_a\left(B^{\Sigma,{\cal A}_a}(t)-tB^{\Sigma,{\cal A}_a}(1)\right)$ 
has the same law as the corresponding projected bridge in the isotropic
model with covariance matrix $c_aI_{d_a}$. Together with the
independence and Gaussian law of the off-diagonal entries of $H_a$, this
extends the conditional identity to the covariance matrix
$\Sigma_{aa}$.

By \cite[Theorem 3.2]{HL2}, the centered and scaled partial sums of the RSK row lengths converge
jointly. Hence, as $N\to +\infty$, 
\begin{eqnarray*}
&\bigoplus_{a=1}^K \left ( \frac{\sum_{j=1}^{m_a + r} R_N^j - N \left ( \sum_{b< a} d_b p_b + r p_a \right )}{\sqrt{N}} \right )_{r=1}^{d_a} \\
&\qquad \qquad \qquad \qquad \Longrightarrow \bigoplus_{a=1}^K \left ( \sum_{b < a} \Tr(H_b) + \sum_{j=1}^r \mu_j(H_a) \right )_{r=1}^{d_a}.
\end{eqnarray*}

To get the individual row lengths, take consecutive differences in the lexicographic ordering of the pairs $(a, r)$.  We consider the cases. For $r>1$, subtracting the $(a, r-1)$ component  gives
\begin{equation*}
\frac{R_N^{m_a + r}- N p_a}{\sqrt{N}} \Longrightarrow \sum_{j=1}^r \mu_j(H_a) -\sum_{j=1}^{r-1} \mu_j(H_a) = \mu_r (H_a).  
\end{equation*} 
For $r=1$ and $a>1$, subtracting the last sum of block $a-1$, with the usual empty-sum convention when $a=2$, and using $\sum_{j=1}^{d_{a-1}} \mu_j(H_{a-1}) = \Tr(H_{a-1})$ gives, as $N\to +\infty$, 
\begin{eqnarray*}
\frac{R_N^{m_a + 1} -N p_a}{\sqrt{N}} \Longrightarrow &\sum_{b< a} \Tr (H_b) + \mu_1 (H_a) - \left ( \sum_{b < a-1} \Tr (H_b) + \Tr(H_{a-1}) \right ) \\ 
&= \mu_1 (H_a).
\end{eqnarray*}  
Finally, if $r=1$ and $a=1$, then the leading term shows that, as $N\to +\infty$, 
\begin{equation*}
\frac{R_N^1 - Np_1}{\sqrt{N}} \Longrightarrow \mu_1 (H_1).
\end{equation*}
The map taking the $M$ individual row lengths to their successive
partial sums, ordered block by block, is a continuous invertible linear map
on $\RR^M$.  The continuous mapping theorem therefore yields the 
claimed joint convergence of the individual row lengths.  
\end{proof}

Finally, we generalize our approach to the infinite countable alphabet $1 < 2< \cdots$. 
Let $(X_n)_{n \geq 0}$ be an irreducible, aperiodic, positive recurrent, homogeneous Markov chain started from its stationary distribution $\pi = (\pi_i)_{i \ge 1}$. 

Let $p_1>p_2>\cdots $ be the distinct values assumed by the stationary
probabilities $(\pi_i)_{i\ge 1}$ , and set
$$ {\cal A}_a:=\{i\geq1:\pi_i=p_a\},\qquad
d_a:=|{\cal A}_a|,\qquad
m_a:=\sum_{b<a}d_b, 
$$
with the empty-sum convention $m_1 = 0$.  Clearly ${\cal A}_a$ is finite since $p_a>0$ and $\sum_i\pi_i=1$.  
For $i \ge 1$, define $S_N^i$ at the grid points by
\begin{equation*}
S_N^i\left(\frac{k}{N}\right) = \frac{1}{\sqrt{N}} \sum_{\ell=1}^k \left ( \one_{\{X_\ell = i\}} - \pi_i \right ), \qquad 0\leq k\leq N,
\end{equation*}
and extend $S_N^i$ to $[0,1]$ by linear interpolation.
Let $B^\Sigma=(B_i^\Sigma)_{i\ge 1}$ be a centered Gaussian process such that 
$$\operatorname{Cov}\left(B_i^\Sigma(s),B_j^\Sigma(t)\right)
 =(s\wedge t)\Sigma(i,j),
 \qquad i,j\ge 1,\quad 0\le s,t\le 1.
$$
Assume that, for every $L\ge 1$,
$$(S_N^1,\dots,S_N^L) \stackrel{N\to +\infty}\Longrightarrow (B_1^\Sigma,\ldots,B_L^\Sigma),$$ 
in $C([0,1],\mathbb R^L)$.  

Now, write ${\cal A}_a=\{i_{a,1}<\cdots<i_{a,d_a}\}$. For $1\le r\le d_a$, define
$F_{a,r}^\Sigma$ to be the path functional of $B^{\Sigma, {\cal A}_a}$ used in the proof of Theorem~\ref{thm:directsumversion}, namely, 
$$
F_{a,r}^\Sigma:=
\max_{\mathbf t\in\mathcal I_{r,d_a}}
\sum_{j=1}^{r}\sum_{\ell=j}^{d_a-r+j}
\left[
B^\Sigma_{i_{a,\ell}}(t_{j,\ell})
-B^\Sigma_{i_{a,\ell}}(t_{j,\ell-1})
\right],
\qquad F_{a,0}^\Sigma:=0,
$$
where $\mathcal I_{r,d}$ consists of arrays $\left(t_{j,\ell}\right)_{1\le j\le r, 0\le\ell\le d}$ such that $t_{j,\ell}=0, 0\le\ell\le j-1$, 
$t_{j,\ell}=1, d-r+j\le\ell\le d$, 
$t_{j,\ell-1}\le t_{j,\ell}, 1\le j \le r, 1\le \ell \le d$, $t_{j+1,\ell+1}\le t_{j,\ell}, 1\le j<r, 0\le\ell<d$. 
The boundary conditions give $F_{a,d_a}^\Sigma=\sum_{\ell=1}^{d_a}B^\Sigma_{i_{a,\ell}}(1)$, as used later in the proof below.  $F_{a,r}^\Sigma$ is defined as the $r$-path functional of the coordinates indexed by ${\cal A}_a$, using the admissible-time set and 
\cite[equation (3.8)]{HL2}.  The corresponding limit in \cite[equation (3.8)]{HL2} also includes endpoint terms from earlier blocks; these terms are not included in $F_{a,r}^{\Sigma}$.

\begin{thm}\label{thm:countable}
Under the assumptions above, for every fixed $a \ge 1$, 
\begin{equation*}
\bigoplus_{b=1}^a \left( \frac{R_N^{m_b+r}-Np_b}{\sqrt{N}} \right)_{r=1}^{d_b} 
\stackrel{N\to +\infty}\Longrightarrow \bigoplus_{b=1}^a \left ( F_{b,r}^\Sigma -F_{b,r-1}^\Sigma \right )_{r=1}^{d_b}.
\end{equation*}
Moreover, as $N \to +\infty$, these finite-block convergences hold jointly as a convergence of the entire sequence:
\begin{equation*}
\left[
\left(
\frac{R_N^{m_b+r}-Np_b}{\sqrt N}
\right)_{r=1}^{d_b}
\right]_{b\ge1}
\ \Longrightarrow\
\left[
\left(F_{b,r}^\Sigma-F_{b,r-1}^\Sigma\right)_{r=1}^{d_b}
\right]_{b\ge1},
\end{equation*}
where the convergence holds in $\prod_{b\ge1}\mathbb R^{d_b}$ equipped with its product topology. 

Fix $a\ge 1$.  For $b,c\ge 1$, set $$\Sigma_{bc}:=\left(\Sigma(i_{b,k},i_{c,\ell})\right)_
{1\le k\le d_b,\;1\le\ell\le d_c},
\qquad Q_b:=I_{d_b}-\frac1{d_b}\mathbf1_{d_b}\mathbf1_{d_b}^{T}.$$
In addition, suppose there exist constants $c_b\ge 0$, $1\le b\le a$, such that
$Q_b\Sigma_{bb}Q_b=c_bQ_b, 1\le b\le a$, 
$Q_b\Sigma_{bc}Q_c=0, 1\le b<c\le a$. 

Let $(H_1,\ldots,H_a)$ have centered jointly Gaussian diagonals with block covariance $(\Sigma_{bc})_{1\le b,c\le a}$.  
Independently of these diagonal vectors, let all upper-triangular off-diagonal entries, across all matrices, be mutually independent, with
$(H_b)_{ij}\sim N(0,c_b)_{\mathbb C}$, $1\le i<j\le d_b$.  Then the limiting vector for the first $a$ blocks has the same law as $\bigoplus_{b=1}^{a}
\left(\mu_r(H_b)\right)_{r=1}^{d_b}$.

If the displayed conditions hold for every block and every pair of blocks, the same identification holds for the entire limiting sequence, with  $(H_b)_{b\ge1}$ constructed consistently.
\end{thm}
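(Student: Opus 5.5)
The plan is a sandwiching argument that reduces the countable alphabet to finite truncations, followed by an application of the finite-alphabet identification block by block. Fix $a\ge1$ and set $L:=m_{a+1}$, so that $\{1,\dots,L\}$ need not contain ${\cal A}_1\cup\dots\cup{\cal A}_a$ in general. Choose instead $L$ large enough that ${\cal A}_b\subset\{1,\dots,L\}$ for all $b\le a$; this is possible since each ${\cal A}_b$ is finite. The letters outside the first $a$ blocks all have stationary probability at most $p_{a+1}<p_a$.

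The first step is to compare the RSK shape of the Markov word with that of a finite-alphabet word. Let $\Lambda\subset\{1,\dots,L\}$ be the set of letters lying in the first $a$ blocks. Deleting from the word all letters outside $\Lambda$ gives a word whose RSK diagram is dominated row-partial-sum-wise by the original; this is the lower bound. For the upper bound, any $k$ disjoint weakly increasing subsequences can be split into their $\Lambda$-part and the remainder. By Greene's theorem, $\sum_{j\le k}R_N^j$ is at most the corresponding quantity for the $\Lambda$-letters alone plus $k$ times the maximal number of non-$\Lambda$ letters that can be collected along increasing paths. I expect the main obstacle to be showing that, after centering by $N\sum_b d_b p_b$ and dividing by $\sqrt N$, this interaction is asymptotically negligible.

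I would handle that obstacle through the path-functional representation of \cite[equation (3.8)]{HL2}. The row partial sums are maxima over $k$-path arrays of sums of $N\pi_i\Delta t+\sqrt N S_N^i$ increments. Any time spent on a letter of probability at most $p_{a+1}$ costs a drift of order $N(p_a-p_{a+1})\Delta t$ relative to the optimal top-block letters. Letters of index above $L$ carry total mass $\sum_{i>L}\pi_i\to0$ as $L\to\infty$, and their counts are controlled by the ergodic theorem, which bounds the count by $N\sum_{i>L}\pi_i+o(N)$. Combining these gives $\limsup_N \PP\bigl(|\text{error}|>\eta\sqrt N\bigr)\le\delta(L)$ with $\delta(L)\to0$. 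Inside $\{1,\dots,L\}$, the finite-dimensional convergence $(S_N^1,\dots,S_N^L)\Rightarrow(B_1^\Sigma,\dots,B_L^\Sigma)$ and the continuity of the path functionals reproduce the argument of \cite[Theorem 3.2]{HL2}. As in that theorem, the lower-probability letters inside $\{1,\dots,L\}$ contribute nothing, because the penalization forces their time increments to vanish (compare the degeneration argument in Theorem~\ref{thm:mainclassification}). Letting $N\to\infty$ and then $L\to\infty$ yields the joint convergence of the partial sums to $\bigl(\sum_{c<b}\sum_{\ell}B^\Sigma_{i_{c,\ell}}(1)+F_{b,r}^\Sigma\bigr)_{b\le a,r\le d_b}$. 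Taking consecutive differences, using $F_{b,d_b}^\Sigma=\sum_\ell B^\Sigma_{i_{b,\ell}}(1)$ exactly as in the proof of Theorem~\ref{thm:directsumversion}, gives the first display.

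Convergence in $\prod_b\RR^{d_b}$ with the product topology is equivalent to convergence of all finite-dimensional projections. The first display provides these projections, and the limits are consistent because they are all functionals of the single process $B^\Sigma$. This gives the second display.

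For the identification, under the stated blockwise conditions on the first $a$ blocks, the vector $(B^{\Sigma,{\cal A}_b})_{b\le a}$ is a finite-dimensional Brownian motion with covariance $(\Sigma_{bc})_{b,c\le a}$. The proof of Theorem~\ref{thm:directsumversion} applies verbatim to this finite collection. It uses conditioning on the endpoints, the independence across blocks of the projected bridges forced by $Q_b\Sigma_{bc}Q_c=0$, and \cite[Theorem 2.1]{BGH} within each block. The result is $(F^\Sigma_{b,r}-F^\Sigma_{b,r-1})\stackrel d=(\mu_r(H_b))$ jointly over $b\le a$. If the conditions hold for all blocks, the matrices $(H_b)_{b\ge1}$ can be constructed consistently by Kolmogorov extension. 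The diagonal law is the Gaussian family with covariance $\Sigma$ restricted to the blocks, and the off-diagonal entries are independent of it. The finite-dimensional laws then agree for every $a$, so the laws on the product space coincide.
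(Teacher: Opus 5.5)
Your lower bound (deleting the letters outside the first $a$ blocks), the consecutive-difference step, and the identification via Theorem~\ref{thm:directsumversion} with a Kolmogorov-consistent family $(H_b)$ are fine. The product-topology step is also fine: finite-dimensional convergence is equivalent to weak convergence in $\prod_b\mathbb R^{d_b}$. The gap is the upper bound. You flag it as the main obstacle, but your argument does not close it.

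First, the Greene splitting you write down cannot give anything at the $\sqrt N$ scale. Its second term, $k$ times the maximal number of non-$\Lambda$ letters collectable along an increasing path, is at least the number of occurrences of a single letter of ${\cal A}_{a+1}$, i.e.\ about $Np_{a+1}$. After centering and dividing by $\sqrt N$ it diverges. The negligibility comes entirely from the trade-off: time spent on a non-$\Lambda$ letter is time not spent on a top-block letter, and the split throws this away.

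Second, your repair does not go through as stated. The path representation and the zero-drift analysis are for a finite ordered alphabet. Along the countable word a path may visit arbitrarily many letters above $L$, and their centered count paths $S_N^i$, $i>L$, are not jointly controlled by the hypothesis, which gives a functional CLT only for finitely many coordinates at a time. The ergodic-theorem bound $N\sum_{i>L}\pi_i+o(N)$ is at the wrong scale. Combined with the drift penalty it only shows that the error is $o(N)$, not $o(\sqrt N)$. So $\limsup_N\mathbb P(|\mathrm{error}|>\eta\sqrt N)\le\delta(L)$ is asserted, not derived.

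The missing idea is a finite-alphabet upper bound whose fluctuations are governed by finitely many of the given count paths.
\begin{itemize}
\item Replace $X_n$ by $X_n\wedge(M+1)$. This map is nondecreasing, so it can only increase the Greene sums $\sum_{i\le j}R_N^i$, and it produces a word on the finite alphabet $\{1,\dots,M+1\}$.
\item Since $\sum_i\mathbf 1_{\{X_n=i\}}=1$, the centered count path of the merged letter is exactly $-\sum_{i\le M}S_N^i$. Hence every count path of the capped word is covered by the assumed CLT.
\item Choose $M$ with $\sum_{i>M}\pi_i<p_a$. Then the merged letter's frequency is strictly below every frequency in the first $a$ blocks. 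The zero-drift localization (maximizing $\sqrt N\,h+g_N$ with $h\le 0$) forces it to receive no time.
\item Consequently the capped word and the reduced word (letters $>M$ deleted) have the same limit, $\sum_{\ell<b}\mathbf 1_{d_\ell}^TB^{\Sigma,{\cal A}_\ell}(1)+F^\Sigma_{b,r}$, and the sandwich closes.
\end{itemize}
This works for one fixed $M$, so no limit $L\to\infty$ and no $\delta(L)$ are needed. What matters is the strict threshold $\sum_{i>M}\pi_i<p_a$, not the tail mass tending to zero.
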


\begin{proof}
Fix $a \ge 1$, let $N$ be the original word length, and choose $M$ such that $\sum_{i>M} \pi_i < p_a$.  This choice of $M$ and the corresponding inequality ensure that all letters in ${\cal A}_1, \ldots, {\cal A}_a$
lie in $\{1,\ldots,M\}$, while the merged letter $M+1$ has a strictly smaller stationary frequency than every letter in those blocks.

Let $R_N^{j, -}$  be the $j^{\textrm{th}}$ row length obtained by removing all letters greater than $M$ from $X_1, \ldots, X_N$, and let $R_N^{j,+}$ be the row length obtained by replacing each $X_n$ by $X_n \wedge (M+1)$. 
(We also set $R_N^{j,\pm}=0$ when the diagram has fewer than $j$ rows.)
The map $i\mapsto i\wedge(M+1)$ is nondecreasing, so it preserves every weakly increasing subsequence.  Moreover, deleting letters cannot create a longer collection of disjoint subsequences. (Such procedures are at the root of the reduced and capped sequences in \cite{HL0}.)

After removing letters, count the remaining letters according to their positions in the original word. 
Keeping the deleted positions empty leaves the path formula for the remaining word unchanged, so whether one removes letters or applies the map $X_n \mapsto X_n \wedge (M+1)$, the interpolated count of letter $i \leq M$ is $N \pi_i t + \sqrt{N} S_N^i(t)$.  After replacement, the count of letter $M+1$ is $Nt \sum_{i>M} \pi_i - \sqrt{N} \sum_{i=1}^M S_N^i(t)$, and centering and rescaling gives $S_N^1, \ldots, S_N^M$ with $-\sum_{i=1}^M S_N^i$ for letter $M+1$ after replacement.

For $b \leq a$ and $1 \leq r \leq d_b$, apply the deterministic count-path representation of \cite[Theorem 3.1, equation (3.1)]{HL2} to the sum $\sum_{j=1}^{m_b+r} R_N^{j,\pm}$.  Replacing the letter counts by their expectations gives a continuous function of the admissible times, with maximum $N(\sum_{\ell< b} d_{\ell} p_{\ell} + rp_b)$. After centering and scaling, the path formula maximizes a sum of increments of $S_N^1, \ldots, S_N^M$, with $-\sum_{i=1}^M S_N^i$ for letter $M+1$, plus a nonpositive deterministic term.

Now, the reduced and capped words need not themselves be Markov chains.  However, \cite[Theorem 3.2]{HL2} contains two logically different ingredients: a deterministic analysis of the path formula and a functional CLT for its particular Markov chain. Here the first ingredient is needed, but the second is supplied by the stated CLT for the original count paths. Deleting or capping letters does not require a new Markov-chain CLT. The zero-drift reduction is worked out in the proof of Theorem 3.2 in \cite{HL2}.  We analyze next why the finite-alphabet path-formula argument applies to both truncations. 

Let $q_M:=\sum_{i>M}\pi_i$, 
$\nu_{b,r}=\sum_{\ell<b}d_\ell p_\ell+rp_b$, and $k=m_b+r$.  
For either the reduced or the capped word, the path formula for the sum of its first $k$ row lengths has a compact admissible-time set $\mathcal T_{b,r}^{\pm}$. After subtracting $N\nu_{b,r}$ and dividing by $\sqrt N$, its objective takes the form
$$\sqrt N h_{b,r}^{\pm}(\mathbf t)
+g_{N,b,r}^{\pm}(\mathbf t), \qquad \mathbf t\in\mathcal T_{b,r}^{\pm}.$$

Here $h^\pm_{b,r}$ is obtained by replacing each normalized count path by its stationary-frequency term and subtracting $\nu_{b,r}$, while $g^\pm_{N,b,r}$  contains the increments of the centered count paths.  Since $\nu_{b,r}$ is the sum of the $k$ largest letter frequencies in either truncated alphabet, $h_{b,r}^{\pm}\le 0$ and $\max h_{b,r}^{\pm}=0$. In the capped alphabet the additional frequency is $q_M<p_a\leq p_b$, so this extra letter cannot receive positive time in a zero-drift path for $b\le a$, since 
$-h_{b,r}^{+}(\mathbf t)\ge (p_b-q_M)D(\mathbf t)$,
where $D(\mathbf t)$ is the time assigned to that letter.

For completeness, if $\mathcal T$ is compact, $h\le 0$ is continuous with a nonempty zero set, and as $N\to +\infty$, $g_N\to g$ uniformly, with $g_N$ continuous, then
$$\max_{\mathbf t\in\mathcal T}
{\sqrt N\,h(\mathbf t)+g_N(\mathbf t)}
\longrightarrow
\max_{\mathbf t:\,h(\mathbf t)=0}g(\mathbf t).$$
Indeed, comparison with a point where $h=0$ forces every maximizing sequence to approach the zero set; compactness gives the upper bound, and restricting the maximum to the zero set gives the lower bound.  
For each truncation, the deterministic zero-drift analysis in the proof of \cite[Theorem 3.2]{HL2} identifies the maximum of the limiting centered path functional restricted to this zero set as
$$\sum_{\ell<b}\mathbf1_{d_\ell}^{T}B^{\Sigma,{\cal A}_\ell}(1)
+F_{b,r}^{\Sigma}.$$
For $b\le a$ and $1\le r \le d_b$, set
$$
V_{N,b,r}^{\pm}
:=
N^{-1/2}\left(\sum_{j=1}^{m_b+r}R_N^{j,\pm}
-N\left(\sum_{\ell<b}d_\ell p_\ell+rp_b\right)\right),$$
and 
$$Z_{b,r}:=
\sum_{\ell<b}\mathbf1_{d_\ell}^{T}B^{\Sigma,{\cal A}_\ell}(1)
+F_{b,r}^{\Sigma}.
$$
Let $V_N^\pm=(V_{N,b,r}^\pm)_{b\le a,1\le r\le d_b}$ and $\widetilde Z=(Z_{b,r})_{b\le  a, 1\le r\le d_b}$. The centered count paths for the reduced word are $S_N^1,\dots,S_N^M$ while those for the capped word are these same paths together with $-\sum_{i=1}^M S_N^i$. Hence both $V_N^-$ and $V_N^+$ are determined by the same vector of original count paths. Applying the zero-drift localization argument simultaneously to the finitely many path functionals, the zero-drift restriction for either truncation is the same functional $\widetilde Z$ of the same limiting Brownian paths. 
The assumed joint functional CLT therefore yields almost-sure uniform convergence of the finitely many random terms $g^\pm_{N,b,r}$ to their limiting Brownian objectives under a Skorokhod coupling, allowing the stated deterministic lemma to be applied simultaneously. Hence, 
$$
(V_N^-,V_N^+)\ \stackrel{N\to+\infty}\Longrightarrow\ (\widetilde Z, \widetilde Z).
$$
By the continuous mapping theorem, $V_N^+-V_N^-\to 0$ in probability. Since the pairs $(b,r)$, in lexicographic order, correspond to $j=1,\dots,m_a+d_a$, this says precisely that, as $N \to +\infty$, 
$$
\max_{1\le j\leq m_a+d_a}N^{-1/2}\left(\sum_{i=1}^{j}
\left(R_N^{i,+}-R_N^{i,-}\right)\right) \stackrel{\mathbb P}\longrightarrow 0.$$

The maximal total length of $j$ disjoint weakly increasing subsequences cannot be increased by removing letters and cannot be decreased by the map $X_n \mapsto X_n \wedge (M+1)$. Hence, by the characterization of RSK row sums as maximal total lengths of disjoint weakly increasing subsequences (see \cite[Section 3.1, Lemma 1]{Fulton}), for every $j \geq 1$,
\begin{equation*}
\sum_{i=1}^j R_N^{i,-} \leq \sum_{i=1}^j R_N^i \leq \sum_{i=1}^j R_N^{i,+}.
\end{equation*} Thus, the centered and scaled partial sums of the original row lengths have the same joint limit. As in the proof of Theorem~\ref{thm:directsumversion}, take consecutive differences. Using $F_{b,d_b}^\Sigma = \one_{d_b}^T B^{\Sigma, {\cal A}_b}(1)$ gives the claimed convergence for the first $a$ blocks.

Since $a$ was arbitrary, for each $b$, finite-block convergence gives tightness of that block’s laws, uniformly in $N$, after enlarging the compact sets to accommodate finitely many initial values of $N$. Choose compact sets $K_b\subset\mathbb R^{d_b}$, with $\sup_N\mathbb P(Y_{N,b}\notin K_b)<\varepsilon2^{-b}$, 
where $Y_{N,b}:=
\left({(R_N^{m_b+r}-Np_b)}/{\sqrt N}
\right)_{r=1}^{d_b}
\in\mathbb R^{d_b}$. 
Then, for the compact set (in the product topology) $\prod_{b\ge 1} K_b$, and setting $Y_N := (Y_{N,b})_{b\ge 1}$, one has $\inf_N\mathbb P\!\left(Y_N\in\prod_{b\ge1}K_b\right)\ge 1-\varepsilon$.  Every subsequential limit has the finite-block distributions identified above. Since these distributions determine a unique probability law on $\prod_{b\geq1}\mathbb R^{d_b}$, the entire sequence converges to the claimed limit.

If the first $a$ blocks satisfy the covariance conditions, the argument with projected bridges in the proof of Theorem~\ref{thm:directsumversion} applied to these blocks shows that the limiting vector is distributed as the ordered eigenvalues of $H_1, \ldots, H_a$.  For the full-sequence assertion, note that the joint Gaussian diagonal vectors for all blocks can be defined consistently from the covariance kernel; their finite-dimensional laws then determine the infinite eigenvalue sequence.
\end{proof}

\begin{rem}\label{rem:final}
We conclude this manuscript by indicating several directions for further investigation.
The foremost open problem is to identify the law of the Brownian
last-passage functional $F_\Sigma$ beyond the isotropic regime, where the
counterexamples of Section~5 show that a Gaussian maximal-eigenvalue
representation need not exist. It would also be natural to seek
process-level versions of the random-matrix identifications and Young
diagram limits established here.  We plan to pursue this elsewhere. 
Queueing and interacting-particle
interpretations of these results may provide further insight into the role
of the covariance structure and suggest alternative constructions of the
limiting processes.  The study of longest increasing subsequences in random words is closely connected to the corresponding problem for random permutations. Determining analogous limiting laws for permutations generated by Markovian models appears more elusive still.  
\end{rem}

\begin{funding}
The first author was supported in part by grants \#524678 and MP-TSM-00002660 from the Simons Foundation.
\end{funding}

\bibliographystyle{spr-ims-number}
\bibliography{Bibliography}

\end{document}